\DeclareMathOperator{\aconv}{aconv}
\DeclareMathOperator{\conv}{conv}
\DeclareMathOperator{\Lip}{Lip}
\DeclareMathOperator{\lip}{lip}
\begin{document}

\title{Lipschitz Free Spaces and Subsets of Finite-Dimensional Spaces}
\begin{abstract}
    We consider two questions on the geometry of Lipschitz free $p$-spaces $\mathcal F_p$, where $0<p\leq 1$, over subsets of finite-dimensional vector spaces. We solve an open problem and show that if $(\mathcal M, \rho)$ is an~infinite doubling metric space (e.g., an~infinite subset of an Euclidean space), then $\mathcal F_p (\mathcal M, \rho^\alpha)\simeq\ell_p$ for every $\alpha\in(0,1)$ and $0<p\leq 1$. An upper bound on the Banach-Mazur distance between the spaces $\mathcal F_p ([0, 1]^d, |\cdot|^\alpha)$ and $\ell_p$ is given. Moreover, we tackle a~question due to~\textcite{Albiac2022} and expound the role of $p$, $d$ for the Lipschitz constant of a~canonical, locally coordinatewise affine retraction from $(K, |\cdot|_1)$, where $K=\bigcup_{Q\in \mathcal R} Q$ is a~union of a~collection $\emptyset \neq \mathcal R \subseteq \{ Rw + R[0,1]^d: w\in\mathbb Z^d\}$ of cubes in $\mathbb R^d$ with side length $R>0$, into the Lipschitz free $p$-space $\mathcal F_p (V, |\cdot|_1)$ over their vertices.
\end{abstract}
\author{Jan B\'ima}

\email{jan.bima@mff.cuni.cz}
\address{Charles University, Faculty of Mathematics and Physics, Department of Mathematical Analysis, Sokolovsk\'a 83, 186 75 Prague 8, Czech Republic}
\address{MSD Czech Republic, Prague, Czech Republic}
\subjclass[2010]{46A16 (Primary), 46A32, 46A45, 46B03 (Secondary)}
\keywords{Lipschitz free $p$-space, $\ell_p$ space isomorphism, Banach-Mazur distance, doubling metric space, approximation property}

\thanks{I wish to thank M. Cúth for his countless insights into my research efforts. I would also like to thank J. L. Ansorena for suggesting~\Cref{cexample:cexample_extension}. I acknowledge the~support of GAČR 23-04776S. The work was completed while the author was an employee at MSD Czech Republic, Prague, Czech Republic.}
\maketitle

\section{Introduction}\label{sec:intro}

Given a~pointed metric space $\mathcal M$, there exists a~Banach space $\mathcal F(\mathcal M)$, called the~\emph{Lipschitz free space over $\mathcal M$}, such that $\mathcal M$ embeds isometrically into $\mathcal F(\mathcal M)$ via a~map $\delta:\mathcal M \to \mathcal F(\mathcal M)$, and for every Banach space $Y$ and a~Lipschitz map $f:\mathcal M\to Y$ which vanishes at the origin, $f$ extends uniquely to a~linear operator $T_f : \mathcal F(\mathcal M)\to Y$ such that $\Lip f = \lVert T_f\rVert$.

Lipschitz free spaces have the distinguishing property that they relate the classical linear theory to the non-linear geometry of Banach spaces. This line of research goes back to the seminal paper by~\textcite{Godefroy2003}, who identified Lipschitz free spaces as a~natural class of objects to study the classical, deep problem whether two Lipschitz isomorphic Banach spaces are linearly isomorphic.

To give an~application of the theory, we note the authors were able to establish that whenever $X$ and $Y$ are Banach spaces and $X$ embeds into $Y$ isometrically, then there exists a~\emph{linear} isometric embedding of $X$ into $Y$. Similarly, they used the universal extension property of Lipschitz free spaces to show that a~\emph{bounded approximation property} of a~Banach space is preserved merely by Lipschitz isomorphisms. Let us remark that the study of approximation properties and the non-linear geometry of Banach spaces is an~ongoing topic (see~\cite{Albiac2022,Borel2012,Godefroy2020,Hajek2022,Kalton2012,Pernecka2015}).

In the context of the Lipschitz isomorphism problem,~\textcite{Albiac2009} later came with an~example of two separable $p$-Banach spaces, for each $0<p<1$, which are Lipschitz isomorphic but fail to be linearly isomorphic. As it turns out, the counterexample to the generalized variant of the Lipschitz isomorphism problem could be developed in the setting of generalized Lipschitz free spaces, coined the~\emph{Lipschitz free $p$-spaces}.

For each $0<p\leq 1$, the Lipschitz free $p$-space $\mathcal F_p (\mathcal M)$ over a~metric space $\mathcal M$ is a~$p$-Banach space into which $\mathcal M$ isometrically embeds, and such that for every $p$-Banach space $Y$ and a~Lipschitz map $f:\mathcal M\to Y$ which vanishes at the origin, $f$ extends uniquely to a~linear operator $T_f : \mathcal F_p(\mathcal M)\to Y$ with $\Lip f = \lVert T_f\rVert$. We note that a~thorough study of Lipschitz free $p$-space was recently initiated in~\cite{Albiac2020}.

The locally non-convex geometry of Lipschitz free $p$-spaces is rather challenging to grasp. To name an~evidence, we remark that for any subspace $\mathcal N$ of a~metric space $\mathcal M$, it is straightforward to show that $\mathcal F_1 (\mathcal N)$ embeds isometrically into $\mathcal F_1 (\mathcal M)$ via a~canonical linearization of the inclusion map $i: \mathcal N \to \mathcal M$. However, this is not the case for $p<1$, and it is even an~open question whether the inclusion in general is an~isomorphic embedding, see~\cites[Theorem~6.1 and~Question~6.2]{Albiac2020}, respectively.

A~distinctive feature of the $p<1$ theory is that a~duality argument is no longer at our disposal, and we instead have to proceed by a~direct geometrical construction in the Lipschitz free $p$-space itself. Moreover, a~strict concavity of a~$p$-norm for $p<1$ typically introduces a~dimensionality factor into the proof work; typically, this would render many of the techniques developed within the vast literature dedicated to approximation properties of Lipschitz free spaces hardly adaptable.

Here we consider two open questions on the structure of Lipschitz free $p$-spaces over subsets of finite-dimensional normed spaces. In particular, we expound the extent to which selected results from the classical $p=1$ theory generalize to the $0<p\leq 1$ scale.

\begin{introthm}[{cf.~\Cref{thm:isomorphism_p}}]\label{introthm:1}
    Let $(\mathcal M, \rho)$ be an~infinite doubling metric space (e.g., an~infinite subset of an~Euclidean space) and $0 < \alpha < 1$, $0<p\leq 1$. Then $\mathcal F_p (\mathcal M, \rho^\alpha)$ is isomorphic to the space~$\ell_p$.
\end{introthm}

A~classical result in the theory of Lipschitz free spaces states that if $|\cdot|$ is a~norm on $\mathbb R^d$ and $\mathcal M$ is an~infinite bounded subset of $\mathbb R^d$ endowed with the~\emph{Hölder distorted metric} $|\cdot|^\alpha$, where $0<\alpha<1$, then $\mathcal F_1 (\mathcal M, |\cdot|^\alpha)\simeq \ell_1$.

The standard approach (consider e.g.~\cite{Weaver2018}) is to identify an~isometric predual of $\mathcal F_1 (\mathcal M, |\cdot|^\alpha)$ as the subspace $\lip_0 (\mathcal M, |\cdot|^\alpha)$ consisting of~\emph{little Lipschitz} functions in the Lipschitz dual $\Lip_0 (\mathcal M, |\cdot|^\alpha)\simeq\mathcal F_1^* (\mathcal M, |\cdot|^\alpha)$. The proof then proceeds by constructing an~isomorphism between $\lip_0 (\mathcal M, |\cdot|^\alpha)$ and the space $c_0$; this is an~earlier result which traces back to, e.g.,~\cite{Bonic1969}. 

More recently, the result was generalized in~\cite{Albiac2021sums} to infinite subsets of $\mathbb R^d$. In particular, an~observation was made showing that for any $0<p\leq 1$, if $\mathcal F_p ([0,1]^d, |\cdot|^\alpha)$ is isomorphic to the space $\ell_p$, then $\mathcal F_p (\mathcal M, |\cdot|^\alpha)\simeq\ell_p$ for any infinite subset $\mathcal M$ of $\mathbb R^d$ (and, by virtue of Assouad's embedding theorem, to Hölder distortions of infinite doubling metric spaces). The authors claimed that the ideas from the standard $p=1$ argument adopt to yield the isomorphism $\mathcal F_p ([0,1]^d, |\cdot|^\alpha)\simeq\ell_p$ for $d=1$ and $0<p\leq 1$; however, for $d\geq 2$ the available techniques turned insufficient and the problem remained open, see~\cite[Question~6.8]{Albiac2021sums}.

Here we tackle the multidimensional structure of $\mathcal F_p ([0,1]^d, |\cdot|^\alpha)$, and unlike the~standard proof for $p=1$, we set up an~explicit linear bijection between $p$-norming sets in the respective spaces. As it turns out, the~basis shares the form with the Schauder basis of $\mathcal F_p ([0,1]^d)$, see~\cite[Theorem~3.8]{Albiac2022}.

It is also interesting to note that our approach gives an~estimate on the Banach-Mazur distance between $\mathcal F_p ([0,1]^d, |\cdot|^\alpha)$ and $\ell_p$ whenever $|\cdot|$ is identified as the $\ell_1$ norm, which is a~new detail even for the case $p=1$. An~interested reader may want to compare the upper bound of $\left(4 d^{2-\alpha} c(\alpha) \right)^d$ with a~lower bound of $c'(\alpha)d^{\alpha(1-\alpha)}(\log (2n))^{-\alpha/2}$ whenever $\alpha\in [1/2, 1)$ and $c'(\alpha)d^{\alpha/2}(\log (2n))^{-\alpha/2}$ otherwise, where $p=1$ and $c(\alpha)$, $c'(\alpha)$ are universal constants, see~\cite[Proposition~8.6]{Kalton2004}.

As an introduction to the proof of~\Cref{introthm:1}, it will be instructive to better investigate a~canonical, locally coordinatewise affine retraction from $(K, |\cdot|_1)$, where $K=\bigcup_{Q\in \mathcal R} Q$ is a~union of a~collection $\emptyset \neq \mathcal R \subseteq \{ Rw + R[0,1]^d: w\in\mathbb Z^d\}$ of cubes in $\mathbb R^d$ with side length $R>0$, into the Lipschitz free $p$-space $\mathcal F_p (V, |\cdot|_1)$ over their vertices.

From~\cite{Lancien2013} we know that for $p=1$, the retraction is Lipschitz continuous with the Lipschitz constant equal to one. More recently, it was established in~\cite[Theorem~5.1]{Albiac2022} that the retraction is Lipschitz continuous for any $0<p\leq 1$; however, the role of $p$, $d$ for the estimate of the Lipschitz constant was unclear and the method led to an~suboptimal estimate even for the classical $p=1$ case.

Here we present an~alternative approach which generically refines the estimate from~\cite{Albiac2022}, and we apply a~double counting argument to derive a~lower bound on the Lipschitz constant of the retraction. That is, we obtain the following result, which answers~\cite[Question~4.6]{Albiac2022} in the negative.

\begin{introthm}[{cf.~\Cref{thm:lattice_embedding}}]\label{introthm:2}
    There is a~unique map $r_{K, V}:(K, |\cdot|_1)\to\mathcal F_p (V, |\cdot|_1)$ such that $r_{K, V}(v) = \delta_V (v)$, where $v\in V$, and $r_{K, V}$ is coordinatewise affine on each of the cubes in $\mathcal R$. If we denote $C(p, n)= n^{1/p -1}$, where $n\in\mathbb N$, then \[ C(p, 2^{d-1}) \leq\Lip r_{K, V} \leq C(p, 2^{d-1})C(p, d)C(p, 3) \text.\]
\end{introthm}

\smallskip
The article is organized as follows. In~\Cref{sec:preliminaries}, we recall the notion of a~$p$-Banach space and include several foundational properties of Lipschitz free $p$-spaces. We also introduce the canonical, locally coordinatewise affine retraction in a~cube. \Cref{sec:retraction} is devoted to the proof of~\Cref{introthm:2}. In~\Cref{sec:isomorphism}, we develop a~series of results leading up to the proof of~\Cref{introthm:1} for the particular case $\mathcal M=[0,1]^d$, and then we deduce the general conclusion for Hölder distortions of infinite doubling metric spaces.

\section{Preliminaries}\label{sec:preliminaries}

\subsection{\texorpdfstring{$p$}{p}-Normed Spaces}

\begin{defn}
    Let $X$ be a~vector space. We say that a~map $\lVert \cdot \rVert : X\to [0, \infty)$ is a~\emph{quasi-norm} on $X$ if there exists $\kappa \geq 1$ such that
    \begin{enumerate}[label=(\roman*), ref=\roman*]
        \item $\lVert x \rVert > 0$ for any $x\neq 0$,
        \item\label{it:quasi_norm_homogeneity} $\lVert \alpha x \rVert = |\alpha|\lVert x \rVert$ for any scalar $\alpha$ and $x\in X$,
        \item\label{it:quasi_norm} $\lVert x+y\rVert \leq \kappa(\lVert x\rVert + \lVert y\rVert)$ for any $x,\,y\in X$.
    \end{enumerate}
    
    We then call $(X, \lVert\cdot\rVert)$ a~\emph{quasi-normed space}. 
    
    Replacing~\cref{it:quasi_norm} with the assumption that for some $0<p\leq 1$,
    \begin{enumerate}[start=3,label=(\roman*'), ref=\roman*']
        \item\label{it:p_norm} $\lVert x+y\rVert^p \leq \lVert x\rVert^p + \lVert y\rVert^p$ for any $x,\,y\in X$,
    \end{enumerate}
    we obtain the notion of a~\emph{$p$-norm} and a~\emph{$p$-normed space}. If moreover $X$ is complete with respect to the metric $d(x,y)=\lVert x-y\rVert^p$, where $x,\,y\in X$, we say $(X, \lVert\cdot\rVert)$ is a~\emph{$p$-Banach space}.
\end{defn}

\begin{defn}
    For $0<p\leq 1$, we say that a~subset $Z$ of a~vector space $X$ is \emph{absolutely $p$-convex} if for any $x,\, y\in Z$ and scalars $\alpha,\,\beta$, where $|\alpha|^p+|\beta|^p\leq 1$, we have $\alpha x + \beta y\in Z$. 
    
    The smallest absolutely $p$-convex set containing $Z$ is denoted by $\aconv_p Z$.
\end{defn}

We shall write $B_{X} = \{ x\in X : \lVert x \rVert\leq 1\}$ for a~\emph{unit ball} of a~quasi-normed space $(X, \lVert\cdot\rVert)$.

\begin{defn}\label{def:p_norming}
    For $0<p\leq 1$, we say that a~subset $Z$ of a~quasi-normed space $X$ is \emph{$p$-norming with constants $\alpha, \,\beta > 0$} whenever \[ \alpha\, \overline\aconv_p Z \subseteq B_X \subseteq \beta\,\overline \aconv_p Z \text.\]

    If $\alpha=\beta=1$, we say $Z$ is \emph{isometrically $p$-norming}.
\end{defn}

The following fact is an~easy linear variant of an extension theorem for Lipschitz continuous maps.

\begin{lemma}\label{fact:extension}
    Let $0<p\leq 1$. Assume that $Y_1$ and $Y_2$ are $p$-norming in $p$-Banach spaces $X_1$ and $X_2$, respectively, and that $\aconv_p Y_1$ and $\aconv_p Y_2$ contain neighborhoods of zero in $\operatorname{span} Y_1$ and $\operatorname{span} Y_2$, respectively. That is, we have $\aconv_p Y_i \supseteq c_i B_{X_i} \cap \operatorname{span} Y_i$ for some $c_i > 0$, for each $i\in \{1, 2\}$.
    
    If $T$ is a~one-to-one linear map from $\operatorname{span} Y_1$ into $X_2$ such that $T(Y_1)=Y_2$, then $T$ extends to an~onto isomorphism $\widetilde T : X_1 \to X_2$.

    Quantitatively, if $Y_1$, $Y_2$ are $p$-norming in $X_1$, $X_2$ with constants $\alpha, \,\beta$ and $\alpha', \,\beta'$, respectively, then $\lVert \widetilde T \rVert \leq \beta/\alpha'$ and $\lVert {\widetilde T}^{-1} \rVert \leq \beta'/\alpha$.
\end{lemma}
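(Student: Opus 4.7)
The plan is to verify that $T$, restricted to $\operatorname{span} Y_1$, is bounded as a map into $X_2$ with operator norm at most $\beta/\alpha'$, and then to extend it by density to an isomorphism of the ambient $p$-Banach spaces.

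First I would record two easy observations. Since $B_{X_1}\subseteq \beta\,\overline\aconv_p Y_1\subseteq \beta\,\overline{\operatorname{span}}\,Y_1$, the $p$-norming hypothesis forces $\operatorname{span} Y_1$ to be dense in $X_1$, and analogously for $Y_2$ in $X_2$. Next, since $T$ is linear and $T(Y_1)=Y_2$, one has $T(\operatorname{span} Y_1)=\operatorname{span} Y_2$ and, by $p$-convex combinations, $T(\aconv_p Y_1)=\aconv_p Y_2$; the map $T\colon\operatorname{span} Y_1\to\operatorname{span} Y_2$ is then a linear bijection whose inverse $T^{-1}$ satisfies the same hypotheses as $T$ with $Y_1$, $Y_2$ swapped.

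The next step is continuity of $T$ on $\operatorname{span} Y_1$. Using $\aconv_p Y_1\supseteq c_1 B_{X_1}\cap\operatorname{span} Y_1$ together with $\aconv_p Y_2\subseteq(1/\alpha')B_{X_2}$, I would show, by homogeneous rescaling, that $\lVert Tu\rVert_{X_2}\leq \lVert u\rVert_{X_1}/(c_1\alpha')$ for every $u\in\operatorname{span} Y_1$. With continuity in hand, I would then sharpen the bound. Fix $x\in\operatorname{span} Y_1$ with $\lVert x\rVert_{X_1}\leq 1$. The $p$-norming of $Y_1$ gives $x/\beta\in\overline\aconv_p Y_1$, so there exist $z_n\in\aconv_p Y_1$ with $z_n\to x/\beta$ in $X_1$. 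From Step~2, $Tz_n\in\aconv_p Y_2\subseteq (1/\alpha')B_{X_2}$, hence $\lVert Tz_n\rVert_{X_2}\leq 1/\alpha'$. By the continuity established above, $Tz_n\to T(x/\beta)=Tx/\beta$ in $X_2$, and continuity of the $p$-norm yields $\lVert Tx\rVert_{X_2}\leq\beta/\alpha'$.

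Since $\operatorname{span} Y_1$ is dense in $X_1$ and $X_2$ is complete, $T$ extends uniquely to a bounded linear map $\widetilde T\colon X_1\to X_2$ with $\lVert\widetilde T\rVert\leq\beta/\alpha'$. Applying the same argument to $T^{-1}$ yields an extension $\widetilde{T^{-1}}\colon X_2\to X_1$ with $\lVert\widetilde{T^{-1}}\rVert\leq\beta'/\alpha$. The compositions $\widetilde T\circ\widetilde{T^{-1}}$ and $\widetilde{T^{-1}}\circ\widetilde T$ agree with the identity on the dense subsets $\operatorname{span} Y_2$ and $\operatorname{span} Y_1$, respectively, and therefore everywhere; hence $\widetilde T$ is an onto isomorphism with the advertised quantitative norm bounds.

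The main subtlety is the interplay between the two closure-vs-non-closure hypotheses. The auxiliary assumption that $\aconv_p Y_i$ is a neighborhood of zero in $\operatorname{span} Y_i$ is needed only to establish the qualitative continuity in Step~3, which in turn is what makes the limit in Step~4 legal; the constant $c_1$ from that hypothesis disappears in the final estimate, as the sharp constant $\beta/\alpha'$ comes purely from the $p$-norming data. Without the neighborhood assumption, $T$ could be linear and bijective on the norming set yet discontinuous, and nothing would bridge $\aconv_p Y_1$ with its closure.
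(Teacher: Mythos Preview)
Your argument is correct. The paper does not actually supply a proof of this lemma; it is stated as ``an easy linear variant of an extension theorem for Lipschitz continuous maps'' and left to the reader, with the surrounding discussion devoted instead to the counterexample showing that the neighborhood-of-zero hypothesis cannot be dropped. Your proof is precisely the standard density argument one would expect: bootstrap qualitative continuity from the neighborhood hypothesis, then use it to pass to the limit and recover the sharp constant $\beta/\alpha'$ from the $p$-norming data, and finally extend by completeness and invert by symmetry. Your closing paragraph correctly identifies the role of the auxiliary constants $c_i$, which is exactly the point the paper is making with Counterexample~\ref{cexample:cexample_extension}.
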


We remark that~\cite[Lemma~2.6]{Albiac2020} states a~stronger, alleged variant of the extension result, leaving out the assumption that $\aconv_p Y_i$ contains a~neighborhood of zero in $\operatorname{span} Y_i$, for each $i\in \{1,2\}$. This claim, however, is not true. Nevertheless, it turns out that the~assumptions of~\Cref{fact:extension} are satisfied in the applications of~\cite[Lemma~2.6]{Albiac2020} in~\cite{Albiac2020}; hence, the derived results remain valid.

The following counterexample to~\cite[Lemma~2.6]{Albiac2020} was suggested by J. L. Ansorena.

\begin{cexample}\label{cexample:cexample_extension}
Let $X$ be a~$p$-Banach space, where $0<p\leq 1$, and let $M$ be a~closed subspace of $X$. Let $T:X\to X/M$ be the quotient map.

Pick a~dense subspace $V$ of $X$ such that $V\cap M = \{ 0 \}$, and set $K = B_X \cap V$. It is easy to see that $K$ is absolutely $p$-convex as well as isometrically $p$-norming in $X$. If $U_X$ denotes the open unit neighborhood of zero in $X$, we verify that \[ B_{X/M} = \bigcap_{t>1} tT(U_X) = \bigcap_{t>1} tT(B_X) = \bigcap_{t>1} tT(\overline{K}) \subseteq \bigcap_{t>1} t\overline{T(K)} = \overline{T(K)} \text. \]

For instance, for any $0<p\leq 1$ we may take $X=\ell_p$, $M=\operatorname{span} \{ e_1 \}$, and $V = \operatorname{span} \left\{\{ e_n: n\geq 2\} \cup \{e_1 + \sum_{n=2}^\infty 2^{-n} e_n \}\right\}$.

If we denote $X_1=X$, $X_2=X/M$, and $Y_1 = K$, $Y_2 = T(K)$, then $Y_1$ and $Y_2$ are absolutely $p$-norming in $X_1$ and $X_2$, respectively, and $T$ is a~linear bijection from $\operatorname{span} Y_1$ onto $\operatorname{span} Y_2$. However, $T$ does not extend to an~isomorphism from $X_1$ into $X_2$.
\end{cexample}

\subsubsection*{A~Particular Class of Coefficients}

We introduce a~coefficient $C(p, n)$ which has the role of $\kappa$ in~\cref{it:quasi_norm} for sums of $n$ elements, i.e., if $(X, \lVert\cdot\rVert)$ is a~quasi-normed space and $0<p\leq 1$, $n\in \mathbb N$ are given, then $\lVert \sum_{i=1}^n x_i\rVert \leq C(p, n)\sum_{i=1}^n \lVert x_i\rVert$ for any $x_1, \ldots, x_n \in X$.

\begin{defn} \label{defn:const}
    For any $n \in \mathbb N$ and $0<p \leq 1$, let us denote \[C(p, n) = \sup \left\{ \left(\sum_{i=1}^n w_i^p\right)^{1/p} : w_i \geq 0 \text{ for } i\in \{1, \ldots, n\},\:\sum_{i=1}^n w_i \leq 1\right\} \text.\]
\end{defn}

Note that $\left(\sum_{i=1}^n |w_i|^p\right)^{1/p} \leq C(p, n)|w|_1$ for any $n\in\mathbb N$, $0<p \leq 1$, and $w=(w_i)_{i=1}^n \in \mathbb R^n$. 

An~explicit formula for $C(p, n)$ follows easily from Hölder's inequality.

\begin{fact} \label{lemma:const_}
    Let $n \in \mathbb N$, $0<p \leq 1$. It holds that $C(p, n) = n^{1/p -1}$.
\end{fact}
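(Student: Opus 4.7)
The plan is to prove both inequalities $C(p,n) \leq n^{1/p-1}$ and $C(p,n) \geq n^{1/p-1}$ separately; as the paper hints, the upper bound comes from Hölder's inequality, and the lower bound from an explicit witness.

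For the upper bound, fix $w_1, \ldots, w_n \geq 0$ with $\sum_{i=1}^n w_i \leq 1$. If $p = 1$ the claim is trivial, so assume $0 < p < 1$ and apply Hölder's inequality to the sum $\sum_{i=1}^n w_i^p \cdot 1$ with conjugate exponents $1/p$ and $1/(1-p)$. This yields
\[ \sum_{i=1}^n w_i^p \leq \left( \sum_{i=1}^n w_i \right)^p \cdot n^{1-p} \leq n^{1-p} \text, \]
so taking $1/p$-th powers gives $\left( \sum_{i=1}^n w_i^p \right)^{1/p} \leq n^{(1-p)/p} = n^{1/p - 1}$, which is the desired upper bound on $C(p, n)$.

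For the lower bound, choose the uniform weights $w_i = 1/n$ for $i = 1, \ldots, n$. Then $\sum_{i=1}^n w_i = 1$ and
\[ \left( \sum_{i=1}^n w_i^p \right)^{1/p} = \left( n \cdot n^{-p} \right)^{1/p} = n^{1/p - 1} \text, \]
showing that the supremum in the definition of $C(p, n)$ is attained and equals $n^{1/p - 1}$.

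I expect no real obstacle here — this is the textbook Hölder computation — the only point to double-check is that the exponents $1/p$ and $1/(1-p)$ are indeed Hölder conjugates (they sum to $1$ after taking reciprocals: $p + (1-p) = 1$), and that the case $p = 1$ is handled separately because then $1/(1-p)$ is undefined; but in that degenerate case $C(1, n) = \sup \sum w_i \leq 1 = n^{1/1 - 1}$ with the supremum attained by any convex combination, so the formula still holds.
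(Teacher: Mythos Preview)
Your proof is correct and follows exactly the approach the paper indicates: the paper does not spell out a proof but simply states that the formula ``follows easily from H\"older's inequality,'' which is precisely what you do for the upper bound, together with the obvious witness $w_i = 1/n$ for the lower bound.
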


\subsection{Lipschitz Free~\texorpdfstring{$p$}{p}-Spaces}

If $(\mathcal M, \rho)$ is a~pointed metric space with $0_{\mathcal M}$ as its base point, we consider $\delta : \mathcal M \to \Lip_0 (\mathcal M)^*$ which maps $x\in\mathcal M$ to the canonical evaluation functional $\delta(x)\in \Lip_0 (\mathcal M)^*$, i.e., $\langle \delta(x), f \rangle = f(x)$ for each $f \in \Lip_0 (\mathcal M)$.

We recall that the~\emph{Lipschitz free space $\mathcal F (\mathcal M)$} over $\mathcal M$ can be identified as the closed span of $\delta(\mathcal M)$ in $\Lip_0 (\mathcal M)^*$, \[ \mathcal F (\mathcal M) = \overline{\operatorname{span}} \{ \delta(x) : x\in \mathcal M \} \text, \] and $\mathcal F(\mathcal M)^*$ is linearly isometric to $\Lip_0 (\mathcal M)$. In fact, if $\operatorname{Mol} (\mathcal M)$ denotes the set of~\emph{elementary molecules} in $\mathcal F(\mathcal M)$, \[ \operatorname{Mol} (\mathcal M) = \left\{ \frac{\delta(x)-\delta(y)}{\rho(x, y)} : x,\,y\in \mathcal M,\, x\neq y \right\}\text, \] it follows by virtue of Hahn-Banach Theorem that $B_{\mathcal F(\mathcal M)} = \overline\conv\operatorname{Mol} (\mathcal M)$.

\smallskip
Let $0<p\leq 1$ and denote $\mathcal P (\mathcal M) = \operatorname{span} \{ \delta(x) : x\in\mathcal M\}$. Drawing from the outlined construction, we set for each $m\in \mathcal P(\mathcal M)$ \begin{equation*} \left\Vert m \right\Vert = \inf \left( \sum_{i=1}^n |a_i|^p \right)^{1/p} \text,\end{equation*} the infimum being taken over all $n\in\mathbb N_0$ and $\mu_i\in\operatorname{Mol} (\mathcal M)$, $a_i\in\mathbb R$, for each $i\in\{1, \ldots, n\}$, such that $m = \sum_{i=1}^n a_i\mu_i$.

It turns out that $(\mathcal P(\mathcal M), \lVert\cdot\rVert)$ is a~$p$-normed space and $\delta$ is an~isometric embedding. A~completion process then results in the Lipschitz free $p$-space $\mathcal F_p(\mathcal M)$.

\begin{thm}[{cf.~\cite[Theorem~4.5]{Albiac2020}}]
    Let $(\mathcal M, \rho)$ be a~pointed metric space. Given $0<p\leq 1$, there exists a~$p$-Banach space $(\mathcal F_p (\mathcal M), \lVert\cdot\rVert)$, called the~\emph{Lipschitz free $p$-space over $\mathcal M$}, and a~map $\delta : \mathcal M \to \mathcal F_p (\mathcal M)$ such that
    \begin{props}[label=(\roman*), ref=\roman*]
        \item\label{it:F_p_isometry} $\delta$ is an~isometric embedding with $\delta(0_{\mathcal M})=0_{\mathcal F_p(\mathcal M)}$,
        \item\label{it:F_p_dense} $\mathcal F_p (\mathcal M) = \overline{\operatorname{span}} \{ \delta(x) : x\in \mathcal M \}$,
        \item\label{it:F_p_extension} if $(Y, \lVert\cdot\rVert_Y)$ is a~$p$-Banach space, then $B(\mathcal F_p(\mathcal M), Y)$ is linearly isometric to $\Lip_0 (\mathcal M, Y)$ via the map $f^* \mapsto f^* \circ \delta$ for each $f^*\in B(\mathcal F_p(\mathcal M), Y)$.
    \end{props}
\end{thm}

\begin{fact}[{cf.~\cite[Corollary~4.11]{Albiac2020}}]\label{fact:cf:isometrically_norming}
    Let $(\mathcal M, \rho)$ be a~pointed metric space. For each $0<p\leq 1$, the set $\operatorname{Mol} (\mathcal M)$ is isometrically $p$-norming in $\mathcal F_p (\mathcal M)$. That is, $B_{\mathcal F_p (\mathcal M)} = \overline\aconv_p\operatorname{Mol} (\mathcal M)$, and for each $m\in \mathcal P(\mathcal M)$ we have \begin{equation*} \left\Vert m \right\Vert = \inf \left( \sum_{i=1}^n |a_i|^p \right)^{1/p} \text,\end{equation*} the infimum being taken over all $n\in\mathbb N_0$ and $\mu_i\in\operatorname{Mol} (\mathcal M)$, $a_i\in\mathbb R$, where $i\in\{1, \ldots, n\}$, such that $m = \sum_{i=1}^n a_i\mu_i$.
\end{fact}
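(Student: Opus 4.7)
The plan is to view the statement as essentially an unpacking of the construction of $\mathcal F_p (\mathcal M)$ sketched in the paragraphs preceding the cited theorem, since the norm on $\mathcal P(\mathcal M)$ is defined by the infimum expression and $\mathcal F_p(\mathcal M)$ is obtained by completion. What remains is to verify that this data really is a $p$-norm, so that the two conclusions — the infimum formula on $\mathcal P(\mathcal M)$ and the identity $B_{\mathcal F_p(\mathcal M)} = \overline\aconv_p \operatorname{Mol}(\mathcal M)$ — fall out.

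For the first conclusion, I would verify that the infimum formula defines a $p$-seminorm on $\mathcal P(\mathcal M)$: concatenating representations of $m_1,m_2$ yields the inequality $\lVert m_1 + m_2\rVert^p \leq \lVert m_1 \rVert^p + \lVert m_2 \rVert^p$, and homogeneity is immediate. Positive definiteness uses property~\cref{it:F_p_extension} of the preceding theorem: for any Lipschitz $f\colon \mathcal M\to \mathbb R$ vanishing at $0_{\mathcal M}$ with $\Lip f \leq 1$, the induced $T_f$ satisfies $|T_f(m)| \leq \lVert m\rVert$, and choosing $f(z) = \rho(z,y) - \rho(0_{\mathcal M}, y)$ separates any desired pair. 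The same observation, applied to $m=\delta(x)-\delta(y)$, shows $\delta$ is isometric and hence every molecule has norm exactly $1$. Since $\mathcal F_p(\mathcal M)$ is the completion of $(\mathcal P(\mathcal M),\lVert\cdot\rVert)$, the infimum formula is precisely the restriction of $\lVert\cdot\rVert_{\mathcal F_p(\mathcal M)}$ to $\mathcal P(\mathcal M)$.

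For the $p$-norming identity, the inclusion $\overline\aconv_p \operatorname{Mol}(\mathcal M)\subseteq B_{\mathcal F_p (\mathcal M)}$ is free: each molecule lies in $B_{\mathcal F_p (\mathcal M)}$, and the unit ball of a $p$-Banach space is absolutely $p$-convex and closed. For the reverse inclusion, given $x\in B_{\mathcal F_p (\mathcal M)}$ and $\epsilon \in (0,1)$, set $x_\epsilon = (1-\epsilon) x$, so $\lVert x_\epsilon \rVert < 1$. By density~\cref{it:F_p_dense}, pick $m\in\mathcal P(\mathcal M)$ with $\lVert x_\epsilon - m \rVert^p < 1 - \lVert x_\epsilon\rVert^p$, which forces $\lVert m \rVert^p < 1$. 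The infimum formula then furnishes a representation $m=\sum_{i=1}^n a_i \mu_i$ with $\mu_i\in \operatorname{Mol}(\mathcal M)$ and $\sum_{i=1}^n |a_i|^p \leq 1$, putting $m\in \aconv_p \operatorname{Mol}(\mathcal M)$. Hence $x_\epsilon \in \overline\aconv_p \operatorname{Mol}(\mathcal M)$, and letting $\epsilon \to 0^+$ finishes the proof.

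The one point requiring care — and the closest thing to an obstacle — is maintaining strict $p$-norm inequalities throughout the approximation so that the infimum is actually attained up to slack strictly less than $1$; this is what guarantees membership in $\aconv_p\operatorname{Mol}(\mathcal M)$ rather than merely its closure, which is crucial since absolute $p$-convex hulls are not generally closed. The $(1-\epsilon)$-shrinking device above is the cleanest way around this, and it is purely formal.
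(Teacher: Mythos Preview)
The paper does not give its own proof of this statement: it is recorded as a Fact with a citation to \cite[Corollary~4.11]{Albiac2020}, and in the surrounding text the infimum expression is taken as the \emph{definition} of $\lVert\cdot\rVert$ on $\mathcal P(\mathcal M)$, after which the paper simply asserts that this is a $p$-normed space and passes to the completion. So there is nothing to compare your approach against.

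Your argument is correct. Two small remarks on presentation. First, since the infimum formula is definitional here and the preceding theorem already records that $\delta$ is an isometric embedding, you need not re-derive positive definiteness or the isometry of $\delta$; the first conclusion of the Fact is then literally a restatement of the definition together with the fact that completion does not change the norm on the dense subspace. Second, in your reverse-inclusion step the line ``Hence $x_\epsilon \in \overline\aconv_p \operatorname{Mol}(\mathcal M)$'' skips a quantifier: you have exhibited \emph{one} $m\in\aconv_p\operatorname{Mol}(\mathcal M)$ within a fixed positive distance of $x_\epsilon$, but membership in the closure requires such $m$ arbitrarily close. This is immediate --- for any $\eta>0$ choose $m\in\mathcal P(\mathcal M)$ with $\lVert x_\epsilon - m\rVert < \min\bigl(\eta,(1-\lVert x_\epsilon\rVert^p)^{1/p}\bigr)$ --- but it should be said.
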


We show that for any dense subset $\mathcal N$ of $\mathcal M$ and any $m\in\mathcal P (\mathcal N)$, the above formula is still valid if we consider decompositions of $m$ merely into molecules over $\mathcal N$.

\begin{lemma}\label{lemma:isometrically_norming_subset}
    Let $\mathcal N$ be a~dense subset of a~pointed metric space $(\mathcal M, \rho)$. Then for each $0<p\leq 1$ and $m\in\mathcal P(\mathcal N)$, we have $\left\Vert m \right\Vert_{\mathcal F_p (\mathcal M)} = \inf \left( \sum_{i=1}^n |a_i|^p \right)^{1/p}$, the infimum being taken over all $n\in\mathbb N_0$ and $\mu_i\in\operatorname{Mol} (\mathcal N)$, $a_i\in\mathbb R$, where $i\in\{1, \ldots, n\}$, such that $m = \sum_{i=1}^n a_i\mu_i$.

    In particular, $\aconv_p \operatorname{Mol} (\mathcal N)$ contains the~open unit neighborhood of zero in $\mathcal P (\mathcal N)$, with respect to the ambient space $\mathcal F_p (\mathcal M)$. That is, we have $\{ m \in \mathcal P (\mathcal N): \lVert m \rVert_{\mathcal F_p (\mathcal M)} < 1 \} \subseteq \aconv_p \operatorname{Mol} (\mathcal N)$.
\end{lemma}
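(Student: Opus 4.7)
The most natural plan is to realize $\mathcal P(\mathcal N)$ as a subspace of $\mathcal F_p(\mathcal N)$ and to show that, under the standing assumption $0_{\mathcal M}\in\mathcal N$, the canonical linearization $I:\mathcal F_p(\mathcal N)\to\mathcal F_p(\mathcal M)$ of the $1$-Lipschitz restriction $\delta_{\mathcal M}|_{\mathcal N}$---obtained via the universal property~\cref{it:F_p_extension}---is an onto isometry. Once this is in hand, the desired finite-decomposition formula is immediate from \Cref{fact:cf:isometrically_norming} applied within $\mathcal F_p(\mathcal N)$.

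The existence of $I$ with $\lVert I\rVert\le 1$ is automatic, so the heart of the matter is producing a norm-one inverse $J:\mathcal F_p(\mathcal M)\to\mathcal F_p(\mathcal N)$. I would first extend the isometric embedding $\delta_{\mathcal N}:\mathcal N\to\mathcal F_p(\mathcal N)$ to a $1$-Lipschitz map $\widetilde\delta:\mathcal M\to\mathcal F_p(\mathcal N)$ by the standard extension theorem for Lipschitz maps into complete metric spaces---note that $\mathcal F_p(\mathcal N)$ is complete under the metric $(x,y)\mapsto\lVert x-y\rVert^p$ as a $p$-Banach space, so the usual Cauchy-sequence construction goes through. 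Since $0_{\mathcal M}\in\mathcal N$, the extension maps $0_{\mathcal M}$ to zero, and a second application of~\cref{it:F_p_extension} produces the desired $J$ with $J\circ\delta_{\mathcal M}=\widetilde\delta$.

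Verifying that $J\circ I$ and $I\circ J$ are the respective identities reduces to checking the equalities on generating sets of evaluations and extending by linearity and continuity. The composition $J\circ I$ acts as the identity on $\delta_{\mathcal N}(\mathcal N)$ by direct computation; to see that $I\circ J(\delta_{\mathcal M}(x))=\delta_{\mathcal M}(x)$ for arbitrary $x\in\mathcal M$, one approximates $x$ by a sequence in $\mathcal N$ and passes to the limit using continuity of $I$ and $\widetilde\delta$---this is precisely where the density hypothesis enters. The ``in particular'' clause follows at once from the established formula by selecting a decomposition of $p$-cost strictly below one. The main subtle point I foresee is the implicit assumption $0_{\mathcal M}\in\mathcal N$: without it, $\widetilde\delta$ need not vanish at the base point (so~\cref{it:F_p_extension} does not apply), and moreover $\mathcal P(\mathcal N)\not\subseteq\operatorname{span}\operatorname{Mol}(\mathcal N)$ in general, making the right-hand infimum $+\infty$ and the statement vacuous.
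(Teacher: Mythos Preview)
Your approach is correct and genuinely different from the paper's. The paper argues by direct perturbation: given $m\in\mathcal P(\mathcal N)$ and a near-optimal decomposition $m=\sum a_i\mu_i$ into molecules over $\mathcal M$, it uses density to nudge each point $u_i,v_i$ appearing in the $\mu_i$ to a nearby point of $\mathcal N$ (leaving points already in $\mathcal N$ fixed), checks via linear independence of the $\delta$'s that the resulting linear map fixes $m$, and thereby produces a decomposition into molecules over $\mathcal N$ with $p$-cost at most $(1+\epsilon)^2\lVert m\rVert$. Your route instead establishes once and for all that the canonical map $I:\mathcal F_p(\mathcal N)\to\mathcal F_p(\mathcal M)$ is an onto isometry---by extending $\delta_{\mathcal N}$ to $\mathcal M$ through completeness and linearizing---and then simply transports \Cref{fact:cf:isometrically_norming} from $\mathcal F_p(\mathcal N)$. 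The paper's argument is more self-contained and hands-on; yours is cleaner, yields the stronger statement $\mathcal F_p(\mathcal N)\simeq\mathcal F_p(\mathcal M)$ isometrically as a by-product, and makes the role of the base-point assumption $0_{\mathcal M}\in\mathcal N$ (which the paper leaves implicit but uses in its application) fully transparent. One small remark: when you invoke the Lipschitz extension, it is worth noting explicitly that you are extending the isometry $(\mathcal N,\rho^p)\to(\mathcal F_p(\mathcal N),\lVert\cdot\rVert^p)$ between genuine metric spaces, so that completeness of the $p$-Banach target is exactly what is needed.
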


\begin{proof}
    Let $m\in \mathcal P (\mathcal N)$ and pick $\epsilon > 0$. It follows from~\Cref{fact:cf:isometrically_norming} that there exist $n\in\mathbb N_0$ and $\mu_i\in\operatorname{Mol} (\mathcal M)$, $a_i\in\mathbb R$, where $i\in\{1, \ldots, n\}$, for which $m = \sum_{i=1}^n a_i\mu_i$, and $\left( \sum_{i=1}^n |a_i|^p \right)^{1/p} < (1+\epsilon)\lVert m \rVert_{\mathcal F_p (\mathcal M)}$. We further consider $u_i,\,v_i \in \mathcal M$, $u_i\neq v_i$, such that $\mu_i = \frac{\delta(u_i)-\delta(v_i)}{|u_i-v_i|^\alpha}$, where $i\in\{1, \ldots, n\}$.
    
    Let us denote $\mathcal B = \{u_i\}_{i\in\{1, \ldots, n\}} \cup \{ v_i\}_{i\in\{1, \ldots, n\}}$. By density of $\mathcal N$ in $\mathcal M$, it is easy to construct a~mapping $r: \mathcal B \mapsto \mathcal N$ such that $r(b)=b$ for any $b\in \mathcal N$, and $\frac{|r(u_i)-r(v_i)|^{\alpha}}{|u_i-v_i|^{\alpha}}<1+\epsilon$ but $r(u_i)\neq r(v_i)$, for any $i\in\{1,\ldots, n\}$. 

    We consider the unique linear mapping $r':\mathcal P (\mathcal B) \mapsto \mathcal P (\mathcal N)$ which satisfies that $r'(\delta(b)) = \delta(r(b))$ for each $b\in \mathcal B$. It is easy to see that $r'(m) = m$, as $r'$ agrees with the identity on $\mathcal P (\mathcal N)$.

    Let us rewrite $m=r'(m)=\sum_{i=1}^n a_i r'(\mu_i)$, where  
    \begin{equation*}
        \begin{split}
            \sum_{i=1}^n a_i r'(\mu_i)&=\sum_{i=1}^n a_i \frac{r'(\delta(u_i))-r'(\delta(v_i))}{|u_i-v_i|^\alpha}\\
            &= \sum_{i=1}^n a_i \frac{|r(u_i)-r(v_i)|^\alpha}{|u_i-v_i|^\alpha}\frac{\delta(r(u_i))-\delta(r(v_i))}{|r(u_i)-(v_i)|^\alpha} \text.
        \end{split}
    \end{equation*}
    
    For each $i\in\{ 1, \ldots, n\}$, we denote $\mu'_i = \frac{\delta(r(u_i))-\delta(r(v_i))}{|r(u_i)-(v_i)|^\alpha} \in \operatorname{Mol} (\mathcal N)$ and set $a'_i = a_i \frac{|r(u_i)-r(v_i)|^\alpha}{|u_i-v_i|^\alpha}$. It follows from the construction that $m = \sum_{i=1}^n a'_i\mu'_i$ and $\left( \sum_{i=1}^n |a'_i|^p \right)^{1/p} < (1+\epsilon)\left( \sum_{i=1}^n |a_i|^p \right)^{1/p}<(1+\epsilon)^2\lVert m \rVert_{\mathcal F_p (\mathcal M)}$. We take $\epsilon \to 0$, and the first claim follows.

    Note that, in particular, the already proven part shows that $\{ m \in \mathcal P (\mathcal N): \lVert m \rVert_{\mathcal F_p (\mathcal M)} < 1 \} \subseteq \aconv_p \operatorname{Mol} (\mathcal N)$. This establishes the second claim.   
\end{proof}

We note that any Lipschitz map between pointed metric spaces which vanishes at the base point has an~extension to a~bounded operator between the respective Lipschitz free $p$-spaces, for every $0<p\leq 1$.

\begin{fact}[{cf.~\cite[Lemma~4.8]{Albiac2020}}]\label{fact:F_p_linearization}
    Let $\mathcal M$, $\mathcal N$ be pointed metric spaces. For every $0<p\leq 1$, there is a~linear isometry $L: \Lip_0 (\mathcal M, \mathcal N) \to B (\mathcal F_p(\mathcal M), \mathcal F_p(\mathcal N))$, called the~\emph{canonical linearization operator}, such that $\delta_{\mathcal N} \circ f = L(f) \circ \delta_{\mathcal M}$, for every $f\in \Lip_0 (\mathcal M, \mathcal N)$.
\end{fact}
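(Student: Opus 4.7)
The plan is to invoke directly the universal extension property~\cref{it:F_p_extension} of the Lipschitz free $p$-space stated just above. Given $f\in\Lip_0(\mathcal M, \mathcal N)$, I first form the auxiliary map $g = \delta_{\mathcal N}\circ f : \mathcal M \to \mathcal F_p(\mathcal N)$. Since $\delta_{\mathcal N}$ is an isometric embedding into the $p$-Banach space $\mathcal F_p(\mathcal N)$ by~\cref{it:F_p_isometry}, we have $\Lip(g) = \Lip(f)$, while $g(0_{\mathcal M}) = \delta_{\mathcal N}(f(0_{\mathcal M})) = \delta_{\mathcal N}(0_{\mathcal N}) = 0$, so $g$ belongs to $\Lip_0(\mathcal M, \mathcal F_p(\mathcal N))$.

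Applying~\cref{it:F_p_extension} with the choice $Y = \mathcal F_p(\mathcal N)$ to the map $g$ produces a unique bounded linear operator $L(f) \in B(\mathcal F_p(\mathcal M), \mathcal F_p(\mathcal N))$ such that $L(f) \circ \delta_{\mathcal M} = g = \delta_{\mathcal N} \circ f$, and with $\lVert L(f)\rVert = \Lip(g) = \Lip(f)$. This simultaneously delivers the intertwining identity and the isometric equality of norms. The linearity of the assignment $f \mapsto L(f)$ (in any ambient linear structure on $\Lip_0(\mathcal M, \mathcal N)$, for instance when $\mathcal N$ is itself a $p$-Banach space so that $af_1 + bf_2$ makes sense) is then a one-line consequence of the uniqueness clause in~\cref{it:F_p_extension}: both $L(af_1 + bf_2)$ and $aL(f_1) + bL(f_2)$ are bounded linear maps extending $\delta_{\mathcal N}\circ(af_1+bf_2)$, so they agree on the total set $\delta_{\mathcal M}(\mathcal M)$ and hence everywhere on $\mathcal F_p(\mathcal M) = \overline{\operatorname{span}}\,\delta_{\mathcal M}(\mathcal M)$ by~\cref{it:F_p_dense}.

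I do not anticipate a substantive obstacle: item~\cref{it:F_p_extension} was tailored precisely so that statements of this form follow almost tautologically. The only step meriting explicit attention is identifying the correct target space for the auxiliary map, namely that by precomposing with $\delta_{\mathcal N}$ one transitions from a bare metric-space-valued map to the $p$-Banach setting where the universal property directly applies and yields the canonical linearization.
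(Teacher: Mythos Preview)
The paper does not supply its own proof of this fact; it is recorded as a citation to \cite[Lemma~4.8]{Albiac2020} and used as a black box. Your argument is correct and is precisely the standard derivation from the universal property~\cref{it:F_p_extension}: compose with $\delta_{\mathcal N}$ to land in a $p$-Banach target, apply the extension/isometry clause, and deduce additivity from uniqueness. There is nothing to compare against in the present paper, and your proof would serve as a complete justification of the cited fact.

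One terminological remark you already anticipated: since $\mathcal N$ is merely a pointed metric space, $\Lip_0(\mathcal M,\mathcal N)$ has no linear structure in general, so the word ``linear'' in the statement is best read as asserting that each $L(f)$ is linear and that $\lVert L(f)\rVert=\Lip f$ (an isometry of the natural ``norms''), rather than linearity of $f\mapsto L(f)$. Your parenthetical handles this correctly; you might tighten the exposition by simply dropping the linearity-of-$L$ discussion, since the statement as written does not require it.
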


\subsection{A~Projective Construction in~\texorpdfstring{$[0, 1]^d$}{[0, 1] d}}\label{subsec:proj_construction}

We overview a~canonical, locally coordinatewise affine projective construction in $[0, 1]^d$; hereby we set ground for basis expansions in $\mathcal F_p ([0,1]^d, |\cdot|^\alpha)$ considered within~\Cref{sec:isomorphism}. Let us remark this particular choice has in various contexts appeared thorough, e.g.,~\cite{Albiac2022,Weaver2018,Lancien2013}. We adopt the notation from~\cite[Section~3]{Albiac2022}. 

\medskip
Let $d\in\mathbb N$, $R>0$. If $w=(w_i)_{i=1}^d \in\mathbb Z^d$, we set $V^d_{w, R} = Rw+R\{0, 1\}^d$ and define a~cube $Q^d_{w, R}$ as the convex hull of the set $V^d_{w, R}$, i.e., $Q^d_{w, R} = Rw+R\left[0, 1\right]^d$. We denote $\mathcal{Q}^d_{R} = \left\{Q^d_{w, R}: w\in\mathbb{Z}^d \right\}$ and $\mathcal V^d_{R} = \left\{V^d_{w, R}: w\in\mathbb Z^d \right\}$. Let us further introduce a~map $\mathcal V : \mathcal{Q}^d_{R} \to \mathcal V^d_{R}$ by $\mathcal V(Q^d_{w, R})= V^d_{w, R}$, where $w\in\mathbb Z^d$.

\smallskip
We define projective coefficients from $\mathbb R^d$ to the vertex set $\mathcal V^d_{R}$. For any $x\in [0, 1]$ and $w\in\mathbb Z$, we first put
\begin{equation*}
    x^{(w)} = 
        \begin{cases} 
            x & \text{if } w=1, \\
            1-x & \text{if } w=0, \\
            0 & \text{if } w\notin\{0, 1\}, \\
        \end{cases}
\end{equation*}
and write, whenever $x=(x_i)_{i=1}^d \in [0, 1]^d$ and $w=(w_i)_{i=1}^d \in\mathbb Z^d$, \[x^{(w)} = \prod_{i=1}^d x_i^{(w_i)} \text. \]

We find this construction to admit a~lift to $\mathbb R^d$ in $x$.

\begin{lemma}[{cf.~\cite[Lemma~3.1]{Albiac2022}}]\label{lemma:cf:coeff_Lambda}
    Let $d\in \mathbb N$, $R>0$. There exists a~map \[\Lambda_R^d: \bigcup \mathcal V^d_R \times \mathbb R^d \to [0, 1]\] such that $\Lambda_R^d(Ru, Rw+Rx)=x^{(u-w)}$, for every $x\in[0, 1]^d$ and $u,\,w\in\mathbb Z^d$.
\end{lemma}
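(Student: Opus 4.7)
The plan is to define $\Lambda_R^d$ directly as a function of the pair $(Ru, y) \in R\mathbb{Z}^d \times \mathbb{R}^d$ via an explicit product of one-dimensional tent functions, thereby sidestepping the non-uniqueness of the decomposition $y = Rw + Rx$ with $x \in [0,1]^d$ (which occurs whenever $y$ lies on a face of the cubical grid $R\mathbb{Z}^d + R\{0,1\}^d$). Note that $\bigcup \mathcal V_R^d = R\mathbb{Z}^d$, so the first argument can be identified with an element $u \in \mathbb{Z}^d$.

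Concretely, for $u \in \mathbb{Z}^d$ and $y = (y_i)_{i=1}^d \in \mathbb{R}^d$, I would set
\[
\Lambda_R^d(Ru, y) = \prod_{i=1}^d \max\!\left(0,\, 1 - \left|\frac{y_i}{R} - u_i\right|\right).
\]
Each factor lies in $[0,1]$, so the product does as well, giving a well-defined map into $[0,1]$.

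To verify the identity, I would fix $u, w \in \mathbb{Z}^d$ and $x \in [0,1]^d$, and take $y = Rw + Rx$, so that $y_i/R - u_i = x_i - (u_i - w_i)$ for each $i$. Since both $\Lambda_R^d(Ru, y)$ and $x^{(u-w)} = \prod_{i=1}^d x_i^{(u_i - w_i)}$ factor as products over the coordinates, the claim reduces to checking, for each $i$, the one-dimensional identity
\[
\max\bigl(0,\, 1 - |x_i - (u_i - w_i)|\bigr) = x_i^{(u_i - w_i)}, \qquad x_i \in [0,1].
\]
A short case split handles this: if $u_i - w_i = 1$, the left-hand side is $1 - (1 - x_i) = x_i = x_i^{(1)}$; if $u_i - w_i = 0$, it is $1 - x_i = x_i^{(0)}$; and if $u_i - w_i \notin \{0, 1\}$, then $|x_i - (u_i - w_i)| \geq 1$ because $x_i \in [0,1]$ and $u_i - w_i$ is an integer outside $\{0,1\}$, so the left-hand side vanishes, matching $x_i^{(u_i - w_i)} = 0$.

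The main (very mild) obstacle is the aforementioned non-uniqueness of $(w, x)$, which could make a naive definition of the form ``pick any decomposition and apply the formula'' ambiguous; the explicit definition above avoids this issue altogether, and internal consistency across the various decompositions of a boundary point $y$ falls out for free as a corollary of the coordinatewise computation.
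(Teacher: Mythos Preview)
Your proof is correct. The paper does not supply its own proof of this lemma but merely cites \cite[Lemma~3.1]{Albiac2022}; your explicit tent-function definition $\Lambda_R^d(Ru,y)=\prod_{i=1}^d\max(0,1-|y_i/R-u_i|)$ and the coordinatewise verification are exactly the standard argument, and they also immediately yield the product formula in \Cref{lemma:cf:coeff_Lambda_properties}~\labelcref{it:Lambda_4}.
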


Moreover, we list properties of $\Lambda^d_R$ which we shall refer to in the sequel.

\begin{lemma}[{cf.~\cite[Lemma~3.4]{Albiac2022}}]\label{lemma:cf:coeff_Lambda_properties}
    Let $d\in \mathbb N$, $R>0$. It holds that
    \begin{enumerate}[label=(\roman*), ref=\roman*]
        \item\label{it:Lambda_1} $\Lambda^d_R (v, x) =0$ whenever $x\in Q \in\mathcal{Q}^d_{R}$ and $v\notin \mathcal V(Q)$,
        \item\label{it:Lambda_2} $\Lambda^d_R (v, u) = \delta_{v, u}$ for any $u,\,v\in \mathcal V^d_R$,
        \item\label{it:Lambda_3} $\sum_{v\in\mathcal V^d_R} \Lambda^d_R (v, x)=1$ for any $x\in\mathbb R^d$,
        \item\label{it:Lambda_4} $\Lambda^d_R (v, x) = \prod_{i=1}^d \Lambda^1_R (v_i, x_i)$ for any $x=(x_i)_{i=1}^d\in\mathbb R^d$, $v=(v_i)_{i=1}^d\in\mathcal V^d_R$.
    \end{enumerate}
\end{lemma}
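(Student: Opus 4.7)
The plan is to first establish the product formula~(iv), from which (i), (ii), and (iii) follow by reduction to the one-dimensional case.

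For (iv), I fix $v = Ru \in \bigcup \mathcal V^d_R$ and pick a~decomposition $x = Rw + R\tilde x$ with $w \in \mathbb Z^d$, $\tilde x \in [0, 1]^d$. The defining identity of~\Cref{lemma:cf:coeff_Lambda} gives $\Lambda^d_R(v, x) = \tilde x^{(u - w)} = \prod_{i=1}^d \tilde x_i^{(u_i - w_i)}$, and the $i$-th factor coincides with $\Lambda^1_R(Ru_i, Rw_i + R\tilde x_i) = \Lambda^1_R(v_i, x_i)$. The non-uniqueness of the decomposition at boundary points is harmless, since the same product reduction is built into the construction from~\Cref{lemma:cf:coeff_Lambda}.

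Properties (i) and (ii) are then read off directly. For (i), if $x \in Q^d_{w, R}$ and $v = Ru \notin V^d_{w, R}$, there must be some coordinate $i$ with $u_i - w_i \notin \{0, 1\}$, which by definition of the scalar operation $(\cdot)^{(\cdot)}$ forces $\tilde x_i^{(u_i - w_i)} = 0$ and hence $\Lambda^d_R(v, x) = 0$. For (ii), writing $u = Ra$, $v = Rb$ and taking the trivial decomposition $u = Ra + R \cdot 0$, the product formula specializes to $\Lambda^d_R(v, u) = \prod_{i=1}^d 0^{(b_i - a_i)}$; since $0^{(0)} = 1$ while $0^{(k)} = 0$ for $k \neq 0$, this equals $\delta_{u, v}$.

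For (iii), (iv) together with the observation that $\Lambda^1_R(Ru, x_i) = 0$ for all but two values of $u \in \mathbb Z$ lets me factorize
\[ \sum_{v \in \bigcup \mathcal V^d_R} \Lambda^d_R(v, x) = \prod_{i=1}^d \sum_{u \in \mathbb Z} \Lambda^1_R(Ru, x_i) \text. \]
Writing $x_i = Rw_i + Ry_i$ with $y_i \in [0, 1]$, only $u \in \{w_i, w_i + 1\}$ contribute to the inner sum, yielding $y_i^{(1)} + y_i^{(0)} = y_i + (1 - y_i) = 1$, so the product equals $1$. The argument is entirely computational and no genuine obstacle is anticipated; the only point worth emphasizing is that (iv) is the structural identity driving the whole proof, and each of (i)--(iii) is essentially a~restatement of an~elementary one-dimensional fact about the piecewise-affine hat functions $x_i \mapsto x_i^{(0)}, x_i^{(1)}$.
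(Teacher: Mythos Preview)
Your argument is correct: the product formula~(iv) follows immediately from the defining identity in \Cref{lemma:cf:coeff_Lambda} together with the definition $x^{(w)}=\prod_i x_i^{(w_i)}$, and (i)--(iii) then reduce to the obvious one-dimensional statements exactly as you describe. The paper itself does not supply a proof of this lemma---it is quoted from \cite[Lemma~3.4]{Albiac2022}---so there is nothing to compare against beyond noting that your direct verification is the natural one.
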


\section{A~Retraction in~\texorpdfstring{$\mathcal F_p (\mathbb [0, 1]^d, |\cdot|_1)$}{Fp ([0, 1]d, ||1)}}\label{sec:retraction}

Drawing on the projective construction from~\Cref{subsec:proj_construction}, we consider a~retraction from $(K, |\cdot|_1)$, where $K = \bigcup_{Q\in\mathcal R} Q$ is a~union of a~collection $\mathcal R$ of regularly spaced cubes in $\mathbb R^d$ with equal side length, into the Lipschitz free $p$-space $\mathcal F_p (V, |\cdot|_1)$ over their vertices. We provide bounds on the Lipschitz constant thereof, and hereby we analyze locally coordinatewise affine extensions of Lipschitz maps from a~vertex set ranging into $p$-Banach spaces.

\medskip
We pick $d\in\mathbb N$ and endow $\mathbb R^d$ with the $\ell_1$ norm in the sequel.

Adopting the notation of~\Cref{subsec:proj_construction}, let $\mathcal R \subseteq \mathcal Q^d_R$, $K = \bigcup_{Q\in\mathcal R} Q$, $V = \bigcup_{Q\in \mathcal Q} \mathcal V(Q)$, and fix a point of $V$ as the base point of both $K$, $V$. As a~consequence of~\Cref{lemma:cf:coeff_Lambda_properties}~\labelcref{it:Lambda_1}, for any $x\in K$ the coefficients $\Lambda^d_R (\cdot, x)$ are finitely supported, and hence we are justified to introduce $r=r_{K, V}:K\to\mathcal F_p(V)$ as \begin{equation}\label{eq:def_r_} r(x) = \sum_{v\in V} \Lambda^d_R(v,x)\delta_V(v), \quad x\in K.\end{equation}

We can easily see that $r_{K,V}=\delta_V$ on $V$. Moreover, it was established in~\cite[Theorem~3.5]{Albiac2022} that, assuming the $\ell_\infty$ norm on $\mathbb R^d$, the map $r_{K, V}$ is Lipschitz with an upper bound depending on both $p$ and $d$. Applying their method to the $p=1$ and $\ell_1$ norm case, the obtained estimate still retained a~term depending on $d$, thus contrasting a~positive result due to~\cite{Lancien2013} which shows the Lipschitz constant to equal 1.

\begin{question}[{\cite[Question~4.6]{Albiac2022}}]
    Is there a~constant $C$ depending on $p$ but not on $d$, $K$, or $V$, such that $\Lip(r_{K, V})\leq C$?
\end{question}

We refine the estimate of~\cite[Theorem~3.5]{Albiac2022} but answer the above question in negative.

\begin{thm}\label{thm:lattice_embedding}
    Let $d\in\mathbb N$, $R>0$, $\mathcal R \subseteq \mathcal Q^d_R$, where $\mathcal R \neq \emptyset$, $K = \bigcup_{Q\in\mathcal R} Q$, $V = \bigcup_{Q\in \mathcal R} \mathcal V(Q)$, and consider $V$ as a~pointed metric space with the subspace $\ell_1$ metric. We let $0<p\leq 1$ and $r=r_{K, V}:K\to\mathcal F_p(V)$ be as above.
    
    For any $x,\, y \in K$ we have \[\left\Vert r(x)-r(y) \right\Vert_p \leq C(p, 2^{d-1})C(p, d)C(p, 3)|x-y|_1 \text.\] Conversely, there exist $x,\, y \in K$ such that \[ C(p, 2^{d-1})|x-y|_1 \leq \left\Vert r(x)-r(y) \right\Vert_p \text.\]
\end{thm}

We develop a~series of preliminary results which outline properties of the present construction under dilation and translation. 

\begin{lemma} \label{lemma:isometry_of_translation}
    Let $V \subseteq \mathbb Z^d$, $0_V \in V$, $R>0$, and $x\in\mathbb Z$. Denote $V^\prime = RV+Rx$, $0_{V'}=R0_V+Rx$, and consider $0_V$, $0_{V'}$ as the base points of $V$, $V^\prime$, respectively.
    
    For any $0<p\leq 1$, the map $\tau^\prime: \delta_{V}(V) \subseteq \mathcal F_p (V) \to \mathcal F_p (V^\prime)$, $\delta_{V}(v) \mapsto \delta_{V^\prime}(Rv+Rx)$, where $v\in V$, extends to an~isomorphism $\tau$ of $\mathcal F_p (V)$ and $\mathcal F_p (V^\prime)$, such that $\left\Vert\tau(x)\right\Vert_{\mathcal F_p (V^\prime)} = R\left\Vert x \right\Vert_{\mathcal F_p (V)}$ for any $x\in \mathcal F_p (V)$.
\end{lemma}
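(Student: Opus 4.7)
The plan is to realize $\tau$ as the canonical linearization of the vertex bijection $\phi: V \to V'$, $\phi(v) = Rv + Rx$, and to read off all desired properties from the functoriality furnished by~\Cref{fact:F_p_linearization}.

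First I will verify that $\phi$ is a~pointed dilation. By the very definition $V' = RV + Rx$, the map $\phi$ is a~bijection onto $V'$ with $\phi(0_V) = 0_{V'}$, and a~direct computation yields $|\phi(u) - \phi(w)|_1 = R|u - w|_1$ for every $u,\, w \in V$. Consequently, $\phi \in \Lip_0(V, V')$ with $\Lip(\phi) = R$, while its inverse $\phi^{-1} \in \Lip_0(V', V)$ satisfies $\Lip(\phi^{-1}) = 1/R$. Invoking~\Cref{fact:F_p_linearization}, I set $\tau := L(\phi) \in B(\mathcal{F}_p(V), \mathcal{F}_p(V'))$ with $\lVert\tau\rVert = R$ and $\sigma := L(\phi^{-1}) \in B(\mathcal{F}_p(V'), \mathcal{F}_p(V))$ with $\lVert\sigma\rVert = 1/R$. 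The defining identity $\delta_{V'} \circ \phi = \tau \circ \delta_V$ shows that $\tau$ agrees with the prescribed map $\tau'$ on $\delta_V(V)$; since $\delta_V(V)$ spans a~dense subset of $\mathcal{F}_p(V)$, $\tau$ is the~unique continuous linear extension of $\tau'$.

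It then remains to verify that $\tau$ is bijective with the claimed scaling property. For invertibility, I will observe that $\sigma \circ \tau$ coincides with the identity on $\delta_V(V)$, by virtue of the functorial identity and $\phi^{-1} \circ \phi = \operatorname{id}_V$; by continuity and density, $\sigma \circ \tau = \operatorname{id}_{\mathcal{F}_p(V)}$, and symmetrically $\tau \circ \sigma = \operatorname{id}_{\mathcal{F}_p(V')}$. For the norm identity, the direction $\lVert\tau(y)\rVert_{\mathcal{F}_p(V')} \leq R\lVert y\rVert_{\mathcal{F}_p(V)}$ comes from $\lVert\tau\rVert = R$, whereas $\lVert y\rVert_{\mathcal{F}_p(V)} = \lVert\sigma(\tau(y))\rVert_{\mathcal{F}_p(V)} \leq R^{-1}\lVert\tau(y)\rVert_{\mathcal{F}_p(V')}$ supplies the opposite estimate.

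The argument is a~routine application of functoriality, so I do not foresee a~substantive obstacle; the only point requiring mild care is that the base points are chosen precisely so that $\phi$ is pointed, which makes~\Cref{fact:F_p_linearization} applicable as stated.
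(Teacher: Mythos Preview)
Your proposal is correct and follows essentially the same approach as the paper: both realize $\tau$ as the canonical linearization (via \Cref{fact:F_p_linearization}) of the pointed bi-Lipschitz bijection $v \mapsto Rv + Rx$ and read off the scaling from $\Lip(\phi) = R$, $\Lip(\phi^{-1}) = 1/R$. The paper's proof is terser, asserting directly that \Cref{fact:F_p_linearization} yields the isomorphism and the scaling identity, whereas you spell out explicitly the construction of the inverse operator and the two-sided norm estimate; this added detail is harmless and arguably clearer.
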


\begin{proof}
    We note the map $\sigma : V \to V^\prime$, $v \mapsto Rv+Rx$, where $v\in V$, is a~bi-Lipschitz bijection of $V$, $V^\prime$, such that $|\sigma(v)-\sigma(u)|_1=R|v-u|_1$, where $v,\,u\in V$.
    
    By~\Cref{fact:F_p_linearization} there exists an~isomorphism $\tau$ of spaces $\mathcal F_p (V)$ and $\mathcal F_p (V^\prime)$ satisfying $\tau \circ \delta_V = \delta_{V^\prime} \circ \sigma$ and $\left\Vert\tau(x)\right\Vert_{\mathcal F_p (V^\prime)} = R\left\Vert x \right\Vert_{\mathcal F_p (V)}$, where $x\in \mathcal F_p (V)$. Since $\tau\restriction_{\delta_{V}(V)} = \tau^\prime$ by the construction, the conlusion follows.
\end{proof}

\begin{lemma} \label{lemma:translation}
    Let $d\in\mathbb N$, $R>0$, $\mathcal R \subseteq \mathcal Q^d_R$, $K = \bigcup_{Q\in\mathcal R} Q$, and $V = \bigcup_{Q\in \mathcal R} \mathcal V(Q)$. Whenever $x,\,y \in K$ satisfy the condition $y-x\in R\mathbb Z^d$, we have that
    \begin{enumerate}[label=(\roman*), ref=\roman*]
        \item\label{it:translation_1} $\Lambda_R^d (v, x)=\Lambda_R^d (v+(y-x), y)$, where $v\in R\mathbb Z^d$,
        \item $v+(y-x)\in V$ for any $v\in V$, $\Lambda^d_R(v, x)\neq 0$,
        \item $r(y) = \sum_{v\in V,\;\Lambda^d_R(v, x)\neq 0} \Lambda^d_R(v, x)\delta_V(v+(y-x))$.
    \end{enumerate}
\end{lemma}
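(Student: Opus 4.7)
The plan is to verify the three items in order. For (i), write $y - x = R t$ with $t \in \mathbb Z^d$, fix a representation $x = R w + R z$ with $w \in \mathbb Z^d$, $z \in [0, 1]^d$, and write the given $v \in R \mathbb Z^d$ as $v = R u$. Then $y = R(w + t) + R z$, and \Cref{lemma:cf:coeff_Lambda} yields
\[
\Lambda^d_R(v, x) = z^{(u - w)} = z^{((u + t) - (w + t))} = \Lambda^d_R\bigl(R(u + t), R(w + t) + R z\bigr) = \Lambda^d_R(v + (y - x), y).
\]

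For (ii), the central step, fix $v \in V$ with $\Lambda^d_R(v, x) \neq 0$ and pick cubes $Q = Q^d_{w, R},\, Q' = Q^d_{w', R} \in \mathcal R$ containing $x,\,y$, respectively. Write $x = R w + R z$ and $y = R w' + R z'$ with $z,\,z' \in [0, 1]^d$. By \Cref{lemma:cf:coeff_Lambda_properties}\labelcref{it:Lambda_1}, $v = R w + R \epsilon$ for some $\epsilon \in \{0, 1\}^d$, and the nonvanishing of $\Lambda^d_R(v, x) = z^{(\epsilon)}$ forces $z_i > 0$ whenever $\epsilon_i = 1$, and $z_i < 1$ whenever $\epsilon_i = 0$. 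Combining $y = R(w + t) + R z = R w' + R z'$ gives $z' - z = (w + t) - w' \in \mathbb Z^d \cap [-1, 1]^d \subseteq \{-1, 0, 1\}^d$, and a coordinatewise case distinction shows $\epsilon + (z' - z) \in \{0, 1\}^d$: in coordinates with $z'_i - z_i = 1$ necessarily $z_i = 0$, forcing $\epsilon_i = 0$ and yielding $\epsilon_i + 1 = 1$; the case $z'_i - z_i = -1$ is symmetric; and $z'_i - z_i = 0$ is trivial. Hence $v + (y - x) = R w' + R(\epsilon + (z' - z)) \in \mathcal V(Q') \subseteq V$.

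For (iii), by the definition of $r$ and \Cref{lemma:cf:coeff_Lambda_properties}\labelcref{it:Lambda_1}, $r(y) = \sum_{v \in V,\, \Lambda^d_R(v, y) \neq 0} \Lambda^d_R(v, y)\, \delta_V(v)$. Swapping the roles of $x,\,y$ in (i) and (ii) shows that $v \mapsto v + (x - y)$ sends $\{v \in V : \Lambda^d_R(v, y) \neq 0\}$ bijectively onto $\{v \in V : \Lambda^d_R(v, x) \neq 0\}$, with inverse $v \mapsto v + (y - x)$. Substituting along this bijection and invoking (i) identifies $r(y)$ with the claimed sum.

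The sole place demanding any real work is the case analysis in (ii), which I expect to be the main obstacle: it captures the fact that nonvanishing of $z^{(\epsilon)}$ pins down the value of $\epsilon_i$ precisely in coordinates where $x,\,y$ lie on opposite sides of a cube boundary, so that $v \mapsto v + (y - x)$ lands inside $V$ even when neither $x$ nor $y$ lies in the interior of a single cube.
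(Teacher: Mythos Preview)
Your argument is correct. Parts (i) and (iii) match the paper's proof essentially verbatim; the substantive difference lies in (ii). The paper dispatches (ii) in one line by reusing (i): since $v+(y-x)\in R\mathbb Z^d$ and, by (i), $\Lambda^d_R(v+(y-x),y)=\Lambda^d_R(v,x)\neq 0$, the support property \Cref{lemma:cf:coeff_Lambda_properties}\,\labelcref{it:Lambda_1} applied to any cube $Q'\in\mathcal R$ containing $y$ forces $v+(y-x)\in\mathcal V(Q')\subseteq V$. Your coordinatewise case analysis arrives at the same conclusion by a more hands-on route, explicitly tracking how the constraint $z^{(\epsilon)}\neq 0$ pins down $\epsilon_i$ exactly in the coordinates where the integer shift $z'-z$ is nonzero. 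The paper's approach is shorter and makes clear that (ii) is a formal consequence of (i) together with the already-recorded support property, so no new computation is needed; your approach, while lengthier, is self-contained and makes the geometric mechanism (why the translated vertex stays in the correct cube) transparent.
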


\begin{proof}
    Pick $w,\,u\in\mathbb Z^d$ such that $x\in Q^d_{w, R} \in \mathcal R$ and $y\in Q^d_{u, R} \in \mathcal R$.

    Appealing to the definition of $\Lambda_R^d$, we establish for any $v \in R\mathbb Z^d$
    \begin{equation}\label{eq:eq_}
        \begin{split}
            \Lambda_R^d (v, x)&=\Lambda_R^d (v, Rw+(x-Rw))\\
            &=\left(\frac x R -w\right)^{\left(\frac v R -w\right)}\\
            &=\Lambda_R^d (v+(y-x), (Rw+(y-x))+(x-Rw))\\
            &=\Lambda_R^d (v+(y-x), y),
        \end{split}
    \end{equation}
    where $v+(y-x)\in R\mathbb Z^d$ by the assumption; this verifies the first claim.

    As $\Lambda^d_R(v, y) = 0$ whenever $v\in\mathcal V^d_R$, $v\notin \mathcal V(Q^d_{u, R}) \subseteq V$, we deduce $\{ v+(y-x) : v\in V,\, \Lambda^d_R(v, x)\neq 0\} \subseteq \{ v : v\in V,\,\Lambda^d_R(v, y)\neq 0 \}$; this proves the second claim. 
    
    In fact, since the inclusion similarly holds for the role of $x$, $y$ interchanged, we conclude \begin{equation} \label{eq:eq} \{ v+(y-x) : v\in V,\, \Lambda^d_R(v, x)\neq 0\} = \{ v : v\in V,\,\Lambda^d_R(v, y)\neq 0 \} \text.\end{equation}

    We assert that
    \begin{align*}
        r(y) &= \sum_{\substack{v\in V\\\Lambda^d_R(v, y)\neq 0}} \Lambda^d_R(v, y)\delta_V(v)\\
        &=\sum_{\substack{v\in V\\\Lambda^d_R(v, x)\neq 0}} \Lambda^d_R(v+(y-x), y)\delta_V(v+(y-x))\\
        &=\sum_{\substack{v\in V\\\Lambda^d_R(v, x)\neq 0}} \Lambda^d_R(v, x)\delta_V(v+(y-x)).
    \end{align*}
    Indeed, the first equality follows from the definition of $r$ while for the second and third one the equations~\eqref{eq:eq} and~\eqref{eq:eq_} apply, respectively. The proof is now complete.
\end{proof}

\smallskip
We proceed to the proof of the main result. To that end, let us first introduce the following notation.

\begin{notation}
    For any $j\in \{1,\ldots, d\}$, we define $\pi_j : \mathbb R^d \to \mathbb R^{d-1}$, $(x_i)_{i=1}^d \mapsto (x_1, \ldots, x_{j-1}, x_{j+1} \ldots, x_d)$, where $(x_i)_{i=1}^d \in \mathbb R^d$.
    
    Given $v=(v_i)_{i=1}^{d-1} \in\{ 0, 1\}^{d-1}$ we shall write $v^0=(v^0_i)_{i=1}^{d}$, $v^1=(v^1_i)_{i=1}^{d}$ for the elements of $\mathbb \{0, 1\}^{d}$ satisfying $v^0_i=v^1_i=v_i$, where $i \in \{1, \ldots, d-1\}$, and $v^0_d=0$, $v^1_d=1$, respectively.
\end{notation}

\begin{proof}[Proof of~\Cref{thm:lattice_embedding}]
    We claim that up to a~dilation and a~translation, it suffices to consider the case $R=1$ and $Q_{0, 1} \in \mathcal R$. Indeed, if $\mathcal R^\prime \subseteq \mathcal Q^d_R$, $K^\prime = \bigcup_{Q\in\mathcal R^\prime} Q$, $V^\prime = \bigcup_{Q\in \mathcal R^\prime} \mathcal V(Q)$, and $0_{V'}$ is the base point of $V'$, we may find $\mathcal R \subseteq \mathcal Q^d_1$, $Q_{0, 1} \in \mathcal R$, $K  = \bigcup_{Q\in\mathcal R} Q$, $V = \bigcup_{Q\in \mathcal R} \mathcal V(Q)$, $0_V\in V$, and $w\in\mathbb Z^d$, such that $K^\prime=RK+Rw$, $V^\prime=RV+Rw$, and $0_{V'}=R0_V+Rw$. We denote $\sigma : K \to K'$, $y \mapsto Ry+Rw$, where $y\in K$, and consider $0_V$ as the base point of $V$.
    
    We let $\tau: \mathcal F_p (V) \to \mathcal F_p (V^\prime)$ denote the isomorphism from~\Cref{lemma:isometry_of_translation}. Pick $y\in K$. For any $v\in\mathbb Z^d$, we note that $\Lambda_R^d (Rv+Rw, Ry+Rw)=\Lambda_1^d (v+w, y+w)$ and $\Lambda_1^d (v+w, y+w)=\Lambda_1^d (v, y)$ by the defining property of $\Lambda^d_R$ and~\Cref{lemma:translation}~\labelcref{it:translation_1}, respectively. We may thus write
    \begin{equation*}
        \begin{split}
            r_{K^\prime, V^\prime} (\sigma(y)) &= \sum_{v\in V^\prime} \Lambda^d_R(v, \sigma (y))\delta_{V^\prime}(v)\\
            &= \sum_{v\in V} \Lambda^d_R(Rv+Rw, Ry+Rw)\delta_{V^\prime}(Rv+Rw)\\
            &= \sum_{v\in V} \Lambda^d_1(v, y)\delta_{V^\prime}(Rv+Rw)\\
            &= \tau(r_{K, V} (y)) \text.
        \end{split}
    \end{equation*}
    
    Since $y\in K$ was arbitrary, and using the fact that $\left\Vert\tau(x)\right\Vert_{\mathcal F_p (V^\prime)} = R\left\Vert x \right\Vert_{\mathcal F_p (V)}$, where $x\in \mathcal F_p (V)$, we deduce for any $y,\,z\in K$
    \[
        \left\Vert r_{K^\prime, V^\prime} (\sigma(y)) - r_{K^\prime, V^\prime} (\sigma(z)) \right\Vert_{\mathcal F_p (V^\prime)}=R\left\Vert r_{K, V} (y) - r_{K, V} (z) \right\Vert_{\mathcal F_p (V)}\text.
    \]
    
    Similarly, we note that $|\sigma(y)-\sigma(z)|_1 = R|y-z|_1$ for any $y,\,z\in K$. As $\sigma$ is a~bijection of $K$ and $K^\prime$, the claim follows.
    \smallskip
    
    To establish an~upper bound on the Lipschitz constant of $r$, we begin with the case when $x,\, y\in Q$ for some $Q\in \mathcal R$. To that end, let us first note that for $x=(x_i)_{i=1}^d \in Q^d_{w, 1}\in \mathcal R$, where $w=(w_i)_{i=1}^d\in \mathbb Z^d$, it holds by~\Cref{lemma:cf:coeff_Lambda_properties}~\labelcref{it:Lambda_1,it:Lambda_4}
    \begin{equation}\label{eq:coordinatewise_expanded}
        \begin{split}
            r(x) &= \sum_{v\in\mathcal V(Q^d_{w, 1})} \Lambda^d_1(v, x)\delta_V(v) \\
            &=\sum_{u\in \{0, 1\}^{d-1}} \Lambda^{d-1}_1(\pi_d(w)+u, \pi_d(x))\Lambda^1_1(w_d, x_d)\delta_V(w+u^0)\\
            &\phantom{=}+\sum_{u\in \{0, 1\}^{d-1}} \Lambda^{d-1}_1(\pi_d(w)+u, \pi_d(x))\Lambda^1_1(w_d+1, x_d)\delta_V(w+u^1)\\
            &=\sum_{u\in \{0, 1\}^{d-1}} \Lambda^{d-1}_1(\pi_d(w)+u, \pi_d(x))(1-(x_d-w_d))\delta_V(w+u^0)\\
            &\phantom{=}+\sum_{u\in \{0, 1\}^{d-1}} \Lambda^{d-1}_1(\pi_d(w)+u, \pi_d(x))(x_d-w_d)\delta_V(w+u^1).
        \end{split}
    \end{equation}

    Pick $x=(x_i)_{i=1}^d,\, y=(y_i)_{i=1}^d \in Q^d_{w, 1}\in \mathcal R$, where $w=(w_i)_{i=1}^d\in \mathbb Z^d$. We shall further assume that $x$, $y$ differ in at most one coordinate; without loss of generality, let $x_i=y_i$ for any $i\in\{1, \ldots, d-1\}$. It follows from~\eqref{eq:coordinatewise_expanded} that
    \begin{multline}\label{eq:diff_expanded}
        r(x) - r(y) = (x_d-y_d)\cdot\\\sum_{u\in \{0, 1\}^{d-1}}\Lambda^{d-1}_1(\pi_d(w)+u, \pi_d(x))(\delta_V(w+u^1)-\delta_V(w+u^0)) \text.
    \end{multline}

    We have $\sum_{u\in\mathcal V^{d-1} (\pi_d(Q^d_{w, 1}))}\Lambda^{d-1}_1(u, \pi_d(x))=1$ by~\Cref{lemma:cf:coeff_Lambda_properties}~\labelcref{it:Lambda_3}. Since also $\left\Vert\delta_V(w+u^1)-\delta_V(w+u^0)\right\Vert_p=1$ for any $u\in\{0,1\}^{d-1}$, we deduce the inequality \begin{equation} \label{eq:ineq1} \left\Vert r(x)-r(y) \right\Vert_p \leq C(p, 2^{d-1})|x_d-y_d| \text.\end{equation}

    For any $x=(x_i)_{i=1}^d,\, y=(y_i)_{i=1}^d \in Q^d_{w, 1}\in \mathcal R$ we may now take a~sequence $(z^i)_{i=0}^d = ((z^i_j)_{j=1}^d)_{i=0}^d \in (Q^d_{w, 1})^{d+1}$ such that for any $i\in \{0, \ldots, d\}$ and $j\in \{1, \ldots, d\}$, it holds that $z^i_j=x_j$ if and only if $i\geq j$ and $z^i_j=y_j$ otherwise. Any two consecutive elements of the sequence $(z^i)_{i=1}^d$ differ in at most one coordinate, and using the inequality~\eqref{eq:ineq1} we establish
    \begin{equation*}
        \begin{split}
            \left\Vert r(x)-r(y) \right\Vert^p_p &\leq \sum_{i=0}^{d-1} \left\Vert r(z^{i+1})-r(z^i) \right\Vert^p_p \\
            &\leq C^p(p, 2^{d-1}) \sum_{i=1}^{d} |x_i-y_i|^p,
        \end{split}
    \end{equation*}
    hence $\left\Vert r(x)-r(y) \right\Vert_p \leq C(p, 2^{d-1})C(p, d)|x-y|_1$.

    As for the general case, we consider $x=(x_i)_{i=1}^d \in Q^d_{w, 1}\in\mathcal R$ and $y=(y_i)_{i=1}^d \in Q^d_{u, 1}\in\mathcal R$, where $w,\, u \in \mathbb Z^d$. Let us denote $I = \{ i \in \{1, \ldots, d\} : w_i \neq u_i\}$.

    Given $i \in I$, we may find $n_i,\, m_i \in \mathbb Z$ such that $n_i\in \{w_i, w_i+1\}$, $m_i\in \{u_i, u_i+1\}$, and $|x_i-y_i|=|x_i-n_i|+|n_i-m_i|+|m_i-y_i|$. Let us also pick $x^\prime = (x^\prime_i)_{i=1}^d \in \mathbb R^d$, $y^\prime = (y^\prime_i)_{i=1}^d \in \mathbb R^d$ such that $x^\prime_i=y^\prime_i=x_i$ if and only if $i\in\{1, \ldots, d\} \setminus I$ and $x^\prime_i=n_i$, $y^\prime_i=m_i$ otherwise, respectively; it follows after short thought that $x^\prime \in Q^d_{w, 1}$, $y^\prime \in Q^d_{u, 1}$, and $|x-y|_1=|x-x^\prime|_1+|x^\prime-y^\prime|_1+|y^\prime-y|_1$.

    We note that $y^\prime-x^\prime \in \mathbb Z^d$; hence~\Cref{lemma:translation} applies and
    \begin{gather*}
        r(y^\prime) = \sum_{\substack{v\in V\\\Lambda^d_1(v, x^\prime)\neq 0}} \Lambda^d_1(v, x^\prime)\delta_V(v+(y^\prime-x^\prime)),\\
        r(x^\prime)-r(y^\prime) = \sum_{\substack{v\in V\\\Lambda^d_1(v, x^\prime)\neq 0}} \Lambda^d_1(v, x^\prime)(\delta_V(v)-\delta_V(v+(y^\prime-x^\prime))) \text.
    \end{gather*}
    
    If $I$ was empty, then necessarily $r(x^\prime)=r(y^\prime)$. Otherwise at least one coordinate of $x^\prime$ assumes value in $\mathbb Z$; we claim $|\{ v\in\mathcal V(Q): \Lambda^d_1(v, x^\prime) \neq 0 \}| \leq 2^{d-1}$ in this case. To that end, let $x'_j \in \mathbb Z$ for some $j\in \{1, \ldots, d\}$, and define $w'_j=w_j+1$ if and only if $x'_j=w_j+1$ and $w'_j=w_j-1$ otherwise. If we now denote $w' = (w_1, \ldots, w_{j-1}, w'_j, w_{j+1}, \ldots, w_d)\in \mathbb Z^d$, it follows that $w\neq w'$ and $x^\prime \in Q_{w, 1} \cap Q_{w', 1}$. We have that $\Lambda^d_1 (v, x') = 0$ whenever $v\notin \mathcal V(Q_{w, 1}) \cap \mathcal V(Q_{w', 1})$ by~\Cref{lemma:cf:coeff_Lambda_properties}~\labelcref{it:Lambda_1}; since $|\mathcal V(Q_{w, 1}) \cap \mathcal V(Q_{w', 1})|\leq 2^{d-1}$, the claim follows.
    
    As $\left\Vert\delta_V(v)-\delta_V(v+(y^\prime-x^\prime))\right\Vert_p=|x^\prime-y^\prime|_1$ for any $v\in\mathcal V(Q)$, $\Lambda^d_1(v, x^\prime)\neq 0$, and $\sum_{v\in V} \Lambda^d_1(v, x^\prime) = 1$ by~\Cref{lemma:cf:coeff_Lambda_properties}~\labelcref{it:Lambda_3}, in either case we are thus justified to establish the inequality \[ \left\Vert r(x^\prime)-r(y^\prime) \right\Vert_p \leq C(p, 2^{d-1}) |x^\prime-y^\prime|_1 \text. \]

    Using the already proven parts and the fact that $x,\,x^\prime\in Q^d_{w, 1}$, $y,\,y^\prime\in Q^d_{u, 1}$, we deduce
    \begin{align*}
        \left\Vert r(x)-r(y) \right\Vert^p_p &\leq \left\Vert r(x)-r(x^\prime) \right\Vert^p_p + \left\Vert r(x^\prime)-r(y^\prime) \right\Vert^p_p + \left\Vert r(y^\prime)-r(y) \right\Vert^p_p \\
        &\leq C^p(p, 2^{d-1})C^p(p, d) (|x-x^\prime|^p_1+|x^\prime-y^\prime|^p_1+|y^\prime-y|^p_1),
    \end{align*} 
    and altogether we obtain $\left\Vert r(x)-r(y) \right\Vert_p \leq C(p, 2^{d-1})C(p, d)C(p, 3)|x-y|_1$.
    \smallskip

    Regarding the converse part, we let $\mathcal R \subseteq \mathcal Q^d_R$, where $\mathcal R \neq \emptyset$, $K = \bigcup_{Q\in\mathcal R} Q$, $V = \bigcup_{Q\in \mathcal R} \mathcal V(Q)$, and consider $0_V$ as the base point of $V$. By the initial remark, it suffices to consider the case $R=1$ and $Q_{0, 1} \in \mathcal R$.
    
    We let $x = (x_i)_{i=1}^d$, $y = (y_i)_{i=1}^d$ be such that $x_i=y_i=1/2$, for each $i\in\{1, \ldots, d-1\}$, and $x_d=0$, $y_d=1$. Recall that using~\eqref{eq:diff_expanded}, we have $r(y)-r(x) = \sum_{u\in\mathcal \{0, 1\}^{d-1}} 2^{-d+1}(\delta_V(u^1)-\delta_V(u^0))$.

    Denote $\mathcal M = \left\{ \frac{\delta(z)-\delta(z')}{|z-z'|_1}: z,\,z' \in V,\, z\neq z' \right\}$; by~\Cref{fact:cf:isometrically_norming}, we have \[ \left\Vert r(y)-r(x) \right\Vert_p = \inf \left( \sum_{i=1}^n |a_i|^p \right)^{1/p} \text,\] the infimum being taken over all $n\in\mathbb N_0$ and $\mu_i\in\mathcal M$, $a_i\in\mathbb R$, where $i\in\{1, \ldots, n\}$, such that $r(y)-r(x) = \sum_{i=1}^n a_i\mu_i$.

    We shall see that $\left\Vert r(x) - r(y) \right\Vert_p \geq C(p, 2^{d-1})|x-y|_1 = C(p, 2^{d-1})$. To that end, pick $n\in\mathbb N_0$ and $a_i\in\mathbb R$, $\mu_i\in\mathcal M$, where $i \in \{1, \ldots, n\}$, as above.
    
    We introduce $\mathcal N : \mathcal V(Q_{0, 1}) \to \mathcal P (\{1, \ldots, n\})$ as \[v \mapsto \left\{ i \in \{1, \ldots, n\} : \mu_i \in \left\{\pm\frac{\delta(z)-\delta(v)}{|z-v|_1}:\,x \in V,\,z\neq v \right\} \right\}\text.\]

    It follows that for any $i \in \{1, \ldots, n\}$, there exist at most two distinct elements $u,\, v \in \mathcal V(Q_{0, 1})$ such that $i \in \mathcal N(u) \cap \mathcal N(v)$; consequently, $\sum_{i=1}^n |a_i|^p \geq \frac 1 2 \sum_{u\in\mathcal V(Q_{0, 1})} \sum_{i\in\mathcal N(u)} |a_i|^p$.
    
    We claim that $\sum_{i\in\mathcal N(u)} |a_i| \geq 2^{-d+1}$ for any $u \in \mathcal V(Q_{0, 1})$.
    
    To that end, we pick $u \in \mathcal V(Q_{0, 1})$ and construct $\varphi : V\to\mathbb R$ as follows: If $\chi_{A}$ denotes the indicator function of a~set $A\subseteq V$, we define $\varphi = \chi_{\{u\}}$ if $u\neq 0_V$ and $\varphi=\chi_{V\setminus \{ u \}}$ otherwise. We note that $\varphi \in \Lip_0 (V)$, $\Lip \varphi \leq 1$, as $\varphi(0_V)=0$ and $|\varphi(z)-\varphi(z)|\leq 1$, $|z-z'|_1 \geq 1$ for any two $z,\, z' \in V$, $z\neq z'$. By~\cref{it:F_p_extension} of $\mathcal F_p (V)$, we let $\varphi^*\in \mathcal F_p (V)^*$ be such that $\varphi=\varphi^*\circ\delta_V$ on the set $V$. 

    We observe that $\left|\left\langle \varphi^*, \mu_i \right\rangle\right| \leq 1$ for any $i \in \{1, \ldots, n\}$, where $\left\langle \varphi^*, \mu_i \right\rangle = 0$ for each $i \in \{1, \ldots, n\} \setminus \mathcal N (u)$. Hence, we may write
    \begin{gather*}
        \begin{split}
            \left|\left\langle \varphi^*, r(y)-r(x) \right\rangle\right|&=\left|\left\langle \varphi^*,\sum_{i=1}^n a_i\mu_i \right\rangle\right|\\
            &\leq \sum_{i\in\mathcal N(u)} |a_i|\left|\left\langle \varphi^*, \mu_i \right\rangle\right|\leq \sum_{i\in\mathcal N(u)} |a_i| \text,
        \end{split}\\
        \intertext{and}
        \begin{split}
        \left|\left\langle \varphi^*,r(y)-r(x) \right\rangle\right|&=\left|\left\langle \varphi^*, \sum_{v\in\mathcal \{0, 1\}^{d-1}} 2^{-d+1}(\delta_V(v^1)-\delta_V(v^0)) \right\rangle\right|\\
        &=2^{-d+1} \text,
        \end{split}
    \end{gather*}
    and the intermediate claim is established.
    
    We note that $x\mapsto x^p$ is subadditive on $[0, \infty)$, and thus $\sum_{i\in\mathcal N(u)} |a_i|^p \geq 2^{p(-d+1)}$ for any $u \in \mathcal V(Q_{0, 1})$. Altogether, we are justified to establish
    \begin{equation*}
        \begin{split}
            \sum_{i=1}^n |a_i|^p &\geq \frac 1 2 \sum_{u\in\mathcal V(Q_{0, 1})} \sum_{i\in\mathcal N(u)} |a_i|^p\\
            &\geq 2^{d-1}2^{p(-d+1)}\\
            &=C^p(p, 2^{d-1}),
        \end{split}
    \end{equation*}
    which concludes the proof.
\end{proof}
\smallskip

Let us recall a~closely related result which shows that $\mathcal F_p ([0, 1]^d)$ has a~Schauder basis, consider~\cite[Theorem~3.8]{Albiac2022}. In fact, the associated canonical projections are the retractions $r$; hence, \Cref{thm:lattice_embedding} gives a~refined estimate on the basis constant.

\section{On the~Geometry of~\texorpdfstring{$\mathcal F_p (\mathcal M, \rho^\alpha)$}{Fp(M, rho alpha)}, Where~\texorpdfstring{$(\mathcal M, \rho)$}{(M, rho)} Is Infinite Doubling}\label{sec:isomorphism}

We recall that a~metric space $(\mathcal M, \rho)$ is \emph{doubling} if there exists $\lambda_{\mathcal M} \in \mathbb N$, called the \emph{doubling constant}, such that any closed ball in $\mathcal M$ of radius $2r>0$ can be covered by $\lambda_{\mathcal M}$-many closed balls of radius $r$. As an~example, any subspace of a~finite-dimensional Banach space is doubling.

A~classical result due to~\textcite{Assouad1983} implies that for any $\alpha \in (0, 1)$, the distorted space $(\mathcal M, \rho^\alpha)$ is bi-Lipschitz equivalent to $(\mathcal M', |\cdot|^\beta)$ for some $\mathcal M' \subseteq \mathbb R^d$, where $d\in\mathbb N$ and $\beta \in (0,1)$. As a~consequence, it follows that $\mathcal F_p (\mathcal M, \rho^\alpha) \simeq \mathcal F_p (\mathcal M', | \cdot|^\beta)$ for any $0<p\leq 1$, see~\Cref{fact:F_p_linearization}.

Moreover, it was observed in~\cite{Albiac2021sums} that for infinite subsets $\mathcal M$ of $\mathbb R^d$, the isomorphism theorem reduces to the case when $\mathcal M= [0, 1]^d$. Hence, the central topic of this section is to fill in the missing part and establish that $\mathcal F_p ([0, 1]^d, |\cdot|^\alpha)\simeq \ell_p$, for any $0 < \alpha < 1$ and $0<p\leq 1$.

\subsection{The Isomorphism~\texorpdfstring{$\mathcal F_p ([0, 1]^d, |\cdot|^\alpha)\simeq \ell_p$}{Fp ([0, 1] d, |.|alpha)}}\label{subsec:isomorphism_cube}

It is easy to see that for any fixed $0<\alpha< 1$, the associated \emph{Hölder distortions} of any two norms $|\cdot|$, $|\cdot|_*$ on $\mathbb R^d$ are Lipschitz equivalent; thus, $\mathcal F_p ([0, 1]^d, |\cdot|^\alpha) \simeq \mathcal F_p ([0, 1]^d, |\cdot|_*^\alpha)$ for any $0<p\leq 1$, again by~\Cref{fact:F_p_linearization}. It will be convenient to identify $|\cdot|$ as the $\ell_1$ norm in the sequel.

\begin{thm}\label{thm:isomorphism_p_cube}
    Let $d\in\mathbb N$ and $0 < \alpha < 1$, $0<p\leq 1$. Then $\mathcal F_p ([0, 1]^d, |\cdot|^\alpha)$ is isomorphic to the space~$\ell_p$.
\end{thm}

Quantitatively, if we consider the Banach-Mazur distance (see, e.g.,~\cite[p.~277]{Jaegermann1989}) of $\mathcal F_p ([0, 1]^d, |\cdot|^\alpha)$ and $\ell_p (V)$ defined by $d(\mathcal F_p ([0, 1]^d, |\cdot|^\alpha), \ell_p (V)) = \inf\, \lVert T \rVert \lVert T^{-1}\rVert$, where $T$ ranges over onto isomorphisms $T: \mathcal F_p ([0, 1]^d, |\cdot|^\alpha) \to \ell_p (V)$, we show that $d(\mathcal F_p ([0, 1]^d, |\cdot|^\alpha), \ell_p (V)) \leq C(p, 2^d)\rho^d\tau^d$, where $\rho$ and $\tau$ are as in~\Cref{lemma:combinations,lemma:step}, respectively.

\smallskip
Pick $0 < \alpha < 1$ and $0<p\leq 1$. Whenever $d\in\mathbb N$ is fixed and known from the context, we shall denote $V_{-1} = \{ 0 \}$, $V_k = [0, 1]^d \cap 2^{-k}\mathbb Z^d$, where $k\in\mathbb N_0$, and $V = \bigcup_{k\in\mathbb N_0} V_k\setminus V_{k-1}$. We take $0$ as the base point of $[0, 1]^d$ and write $\delta : [0, 1]^d \to \mathcal F_p ([0, 1]^d, |\cdot|^\alpha)$ for the canonical isometric embedding.

\smallskip
The proof proceeds by considering the~map $\iota : \{ e_v : v\in V \} \subset \ell_p (V) \to \mathcal F_p ([0, 1]^d, |\cdot|^\alpha)$, \[e_v \mapsto 2^{k\alpha}\left(\delta(v)-\sum_{u \in V_{k-1}} \Lambda^d_{2^{-k+1}}(u, v)\delta(u)\right), \quad v\in V_k\setminus V_{k-1}, \, k\in\mathbb N_0 \text. \]

A~short thought shows that $\{ e_v : v\in V\}$ is $p$-norming (see~\Cref{def:p_norming}) in $\ell_p (V)$ and that $\iota$ extends to a~one-to-one linear map from $\operatorname{span} \{ e_v : v\in V\}$ into $\mathcal F_p ([0, 1]^d, |\cdot|^\alpha)$. Hence, by~\Cref{fact:extension}, $\iota$ shall provide us with an~onto isomorphism $\tilde\iota : \ell_p(V)\to \mathcal F_p ([0, 1]^d, |\cdot|^\alpha)$ once we show that $\iota(\{e_v : v\in V\})$ is $p$-norming in $\mathcal F_p ([0, 1]^d, |\cdot|^\alpha)$.

Let us note this particular choice of $\iota(e_v)$, where $v\in V$, is frequent in the theory of Lipschitz free spaces over an~Euclidean space. Indeed, a~dual variant thereof is used in the standard proof of the $p=1$ case (see, e.g.,~\cite[Theorem~8.44]{Weaver2018}), and it is known that $\iota(e_v)$, where $v\in V$, forms a~Schauder basis of~$\mathcal F_p ([0, 1]^d, |\cdot|)$, see~\cite[Theorem~3.8]{Albiac2022}.

\begin{notation}
    If $d\in\mathbb N$ is fixed, $x=(x_i)_{i=1}^d \in \mathbb R^d$, $j\in\{1, \ldots, d\}$, and $\epsilon\in\mathbb R$, let us set $x^j_\epsilon=(x_1, \ldots, x_j+\epsilon, \ldots, x_d)\in \mathbb R^d$. If $x\in [0, 1]$, $x\in 2^{-n}\mathbb Z \setminus 2^{-n+1}\mathbb Z$, for some $n\in\mathbb N$, we will write $x^+ = x+2^{-n}$, $x^- = x-2^{-n}$.
    
    For $d\in\mathbb N$, $j\in\{1, \ldots, d\}$, and $(x_i)_{i=1}^{d-1}\in [0, 1]^{d-1}$, we furthermore adopt the notation $\delta^j_{(x_i)_{i=1}^{d-1}} (x) = \delta (x_1, \ldots, x_{j-1}, x, x_j, \ldots, x_{d-1})$.
\end{notation}

\subsubsection{One~Linearization Result}

We expand an~element $\delta(v)$, where $v\in V$, in a~way which partially recovers the candidate basis geometry. This will have a~great importance for the~construction considered in~\Cref{lemma:step}.

\begin{lemma} \label{lemma:hat}
    Let $u_1, u_2 \in [0, 1]\cap 2^{-n}\mathbb Z$ for some $n\in\mathbb N_0$, $u_1<u_2$, and $|u_1-u_2|=2^{-n}$. Let $v\in [u_1, u_2]$ satisfy $v\in 2^{-k}\mathbb Z$, where $k\in\mathbb N$, $k\geq n$.

    There exist $\mu_1,\, \mu_2 \geq 0$, $\mu_1+\mu_2=1$, $l \in \mathbb N_0$, $\nu_i > 0$, $n_i \in \mathbb N$, $n_i>n$, and $v_i \in 2^{-n_i}\mathbb Z \setminus 2^{-n_i+1}\mathbb Z$, where $i \in \{1, \ldots, l\}$, such that
    \[ 
        \begin{split} 
            2^{n\alpha}\delta^j_x(v) &= \mu_1 2^{n\alpha}\delta^j_x(u_1)+\mu_2 2^{n\alpha}\delta^j_x(u_2)\\ 
            &\phantom{=}+\sum_{i=1}^l \nu_i 2^{n_i\alpha}\left(\delta^j_x (v_i) - \frac 1 2 (\delta^j_x (v_i^-)+\delta^j_x (v_i^+))\right)\text,
        \end{split}
    \]
    for any $d\in\mathbb N$, $j\in\{1, \ldots, d\}$, and $x=(x_j)_{j=1}^{d-1}\in [0, 1]^{d-1}$.

    Moreover, we have $\left(\sum_{i=1}^l |\nu_i|^p\right)^{1/p} \leq 2^{-\alpha}\left(\frac 1 {1-2^{-p\alpha}}\right)^{1/p}$.
\end{lemma}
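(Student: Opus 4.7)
The plan is to proceed by induction on $k-n \geq 0$, where without loss of generality $k$ is the smallest integer with $v \in 2^{-k}\mathbb Z$. Since the claimed coefficients $\mu_i, \nu_i, n_i, v_i$ are independent of $d$, $j$, and $x$, I would suppress the latter data throughout and abbreviate $\delta^j_x$ by $\delta$; the identity is then the same formal linear combination of evaluations, so it automatically transfers back to every choice of $d, j, x$.

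The base case $k = n$ forces $v \in \{u_1, u_2\}$ and is settled by the trivial decomposition ($l = 0$, exactly one of $\mu_1, \mu_2$ equal to $1$). For the inductive step I would first record the \emph{midpoint identity} at $m = (u_1+u_2)/2 \in 2^{-(n+1)}\mathbb Z \setminus 2^{-n}\mathbb Z$, for which $m^-=u_1$ and $m^+=u_2$, namely
\[
  2^{n\alpha}\delta(m) = \tfrac{1}{2}\cdot 2^{n\alpha}\delta(u_1) + \tfrac{1}{2}\cdot 2^{n\alpha}\delta(u_2) + 2^{-\alpha}\cdot 2^{(n+1)\alpha}\bigl(\delta(m)-\tfrac{1}{2}(\delta(u_1)+\delta(u_2))\bigr),
\]
which is a straightforward computation. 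If $v = m$, this is already the required decomposition with $l=1$, $\mu_1=\mu_2=1/2$, $\nu_1=2^{-\alpha}$, $n_1=n+1$, $v_1=m$. Otherwise $v$ lies strictly in exactly one half, say $v \in [u_1, m]$ (the other case is symmetric). Since $k-(n+1) < k-n$, the inductive hypothesis applied with $u_1, m$ in place of $u_1, u_2$ expands $2^{(n+1)\alpha}\delta(v)$ as $\mu'_1 \cdot 2^{(n+1)\alpha}\delta(u_1) + \mu'_2 \cdot 2^{(n+1)\alpha}\delta(m)$ plus a hat-sum with parameters $\nu'_i > 0$, $n'_i > n+1$, $v'_i \in 2^{-n'_i}\mathbb Z \setminus 2^{-n'_i+1}\mathbb Z$. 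Multiplying through by $2^{-\alpha}$ and substituting the midpoint identity for the $2^{n\alpha}\delta(m)$ summand produces the decomposition of $2^{n\alpha}\delta(v)$ with $\mu_1 = \mu'_1 + \mu'_2/2$, $\mu_2 = \mu'_2/2$ (again summing to $1$), with hat coefficients $\{2^{-\alpha}\nu'_i\}_i$ at the inherited scales and one extra coefficient $2^{-\alpha}\mu'_2$ attached to the new hat at $m$ at scale $n+1$.

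For the $p$-sum bound I would track $a = \sum_i |\nu_i|^p$ across the recursion. In the inductive step,
\[
  a \leq 2^{-p\alpha}\Bigl(\sum_i |\nu'_i|^p + |\mu'_2|^p\Bigr) \leq 2^{-p\alpha}\Bigl(\sum_i |\nu'_i|^p + 1\Bigr),
\]
using $\mu'_2 \in [0,1]$; the base case gives $a = 0$. The fixed-point computation $2^{-p\alpha}\bigl(2^{-p\alpha}/(1-2^{-p\alpha}) + 1\bigr) = 2^{-p\alpha}/(1-2^{-p\alpha})$ then shows that the bound $a \leq 2^{-p\alpha}/(1-2^{-p\alpha})$ is preserved by the recursion, which is exactly the claimed $\bigl(\sum_i |\nu_i|^p\bigr)^{1/p} \leq 2^{-\alpha}\bigl(1/(1-2^{-p\alpha})\bigr)^{1/p}$. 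The only real obstacle I anticipate is bookkeeping—verifying that every hat introduced along the recursion carries the correct scale label $n_i > n$ and sits at the ``genuinely new'' dyadic level $v_i \in 2^{-n_i}\mathbb Z \setminus 2^{-n_i+1}\mathbb Z$—which follows from $n_1 = n+1$ at the midpoint and the inductive $n'_i > n+1 > n$.
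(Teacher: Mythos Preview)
Your argument is correct, and it takes a genuinely different route from the paper's. The paper expands ``bottom-up'': for $v\in 2^{-k}\mathbb Z\setminus 2^{-k+1}\mathbb Z$ it writes the hat identity at $v$ itself,
\[
2^{n\alpha}\delta(v)=2^{(n-k)\alpha}\,2^{k\alpha}\bigl(\delta(v)-\tfrac12(\delta(v^-)+\delta(v^+))\bigr)+\tfrac12\,2^{n\alpha}\delta(v^-)+\tfrac12\,2^{n\alpha}\delta(v^+),
\]
and then averages the two inductive expansions of $v^-,v^+\in 2^{-k+1}\mathbb Z$. Because two sub-problems are merged, the paper must track extra structural invariants---an inclusion property between the hat sets of $v$ and of its neighbours, and the fact that at most one hat appears at each dyadic scale---in order to bound $\sum|\nu_i|^p$ by the geometric series $\sum_{i\geq 1}2^{-p\alpha i}$.

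Your ``top-down'' bisection instead produces a single sub-problem at each step, so the contraction $a\mapsto 2^{-p\alpha}(a+1)$ (with fixed point $2^{-p\alpha}/(1-2^{-p\alpha})$) gives the estimate directly, with no need for the auxiliary invariants. This is cleaner for the purposes of the lemma as stated and as used downstream. The paper's approach, on the other hand, yields a bit more: it shows explicitly that one can take the $v_i$ to be distinct with at most one per dyadic level, and that $0<\nu_i\leq 2^{(n-n_i)\alpha}$ individually. None of that finer information is actually invoked later, so your shortcut loses nothing for the application. One small remark on bookkeeping: when $\mu'_2=0$ (respectively $\mu'_1=0$ in the symmetric half) you simply omit the midpoint hat rather than record a coefficient $\nu_i=0$; this is harmless but worth saying since the statement asks for $\nu_i>0$.
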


\begin{proof}
    We shall establish that for any $k\in\mathbb N$, $k\geq n$, and for any $v\in [u_1, u_2] \cap 2^{-k}\mathbb Z$, we can choose the coefficients $\mu^v_1,\, \mu^v_2 \geq 0$, $l^v \in \mathbb N_0$, $\nu^v_i \in [0, 1]$, $v^v_i \in [u_1, u_2]$, and $n^v_i \in \mathbb N$, where $i\in \{1, \ldots, l^v\}$, so that for any $v\in [u_1, u_2] \cap 2^{-k}\mathbb Z$, it holds
    \begin{enumerate}[label=(\roman*), ref=\roman*]
        \item\label{it:hat_cd_mu} $\mu^v_1, \, \mu^v_2 \geq 0$, $\mu^v_1+\mu^v_2=1$, and $k\geq n^v_i > n$ for every $i \in \{1, \ldots, l^v\}$,
        \item\label{it:hat_cd_once} if $v^\prime \in [u_1, u_2] \cap 2^{-k}\mathbb Z$, there exists at most one index $i\in\{1, \ldots, l^v\}$ such that $v^v_i=v^\prime$,
        \item\label{it:hat_cd_where} $v^v_i \in [u_1, u_2] \cap 2^{-n^v_i}\mathbb Z \setminus 2^{-n^v_i+1}\mathbb Z$ for every $i \in \{1, \ldots, l^v\}$,
        \item\label{it:hat_cd_delta} for any $j\in\{1, \ldots, d\}$ and $x=(x_j)_{j=1}^{d-1}\in [0, 1]^{d-1}$, we have 
        \[ 
            \begin{split} 
                2^{n\alpha}\delta^j_x(v) &= \mu^v_1 2^{n\alpha}\delta^j_x(u_1)+\mu^v_2 2^{n\alpha}\delta^j_x(u_2)\\ 
                &\phantom{=}+\sum_{i=1}^l \nu^v_i 2^{n^v_i\alpha}\left(\delta^j_x (v^v_i) - \frac 1 2 (\delta^j_x ((v^v_i)^-)+\delta^j_x ((v^v_i)^+))\right),
            \end{split}
        \]
        \item\label{it:hat_cd_ineq} $0<\nu^v_i\leq 2^{(n-n^v_i)\alpha}$ for every $i \in \{1, \ldots, l^v\}$,
        \item\label{it:hat_cd_inclusion} if $v^\prime \in [u_1, u_2] \cap 2^{-k+1}\mathbb Z$ is such that $|v-v^\prime|=2^{-k}$, then $\{ v^{v^\prime}_i : i\in \{1, \ldots, l^{v^\prime}\}\} \subseteq \{ v^{v}_i : i\in \{1, \ldots, l^{v}\}\}$,
        \item\label{it:hat_cd_count} $|\{ v^{v}_i : i\in \{1, \ldots, l^{v}\}\} \cap 2^{-m}\mathbb Z \setminus 2^{-m+1}\mathbb Z\} | \leq 1$ for any $m \in \mathbb N$, $m>n$.
    \end{enumerate}
   
    We proceed by induction on $k$. To that end, note that for $k=n$, we may take $l^v=0$ and $\mu^v_1=1$, $\mu^v_2=0$ or $\mu^v_1=0$, $\mu^v_2=1$ for $v$ equal to $u_1$ or $u_2$, respectively.

    Let $k\in \mathbb N$, $k>n$, be such that the claim holds for $k-1$. 
    
    For any $v\in [u_1, u_2] \cap 2^{-k}\mathbb Z$, we define the coefficients $\mu^v_1,\,\mu^v_2 \in \mathbb R$, $l^v \in \mathbb N_0$, $\nu^v_i \in [0, 1]$, $n^v_i \in \mathbb N$, $k\geq n^v_i>n$, $i \in \{1, \ldots, l^v\}$, as follows. If $v\in 2^{-k+1}\mathbb Z$, we take the coefficients from the induction hypothesis. Otherwise $v\in 2^{-k} \mathbb Z \setminus 2^{-k+1}\mathbb Z$, and thus $v^-,\, v^+ \in [u_1, u_2] \cap 2^{-k+1}\mathbb Z$. For any $d\in\mathbb N$, $j\in\{1, \ldots, d\}$, and $x=(x_j)_{j=1}^{d-1}\in [0, 1]^{d-1}$, we may write
    \begin{equation}\label{eq:hat_inductive_expansion}
        \begin{split}
            2^{n\alpha}\delta^j_x(v)&= 2^{(n-k)\alpha}2^{k\alpha}\left(\delta^j_x (v) - \frac 1 2 (\delta^j_x (v^-)+\delta^j_x (v^+))\right)\\
            &\phantom{=}+ 2^{n\alpha-1}\delta^j_x(v^-)+ 2^{n\alpha-1}\delta^j_x(v^+) \text.
        \end{split}
    \end{equation}

    Let now $\mu^{v^+}_1$, $\mu^{v^+}_2$, $l^{v^+}$, $\nu^{v^+}_i$, $n^{v^+}_i$, $v^{v^+}_i$, $i \in \{1, \ldots, l^{v^+}\}$, and $\mu^{v^-}_1, \mu^{v^-}_2$, $l^{v^-}$, $\nu^{v^-}_i$, $n^{v^-}_i$, $v^{v^+}_i$, $i \in \{1, \ldots, l^{v^-}\}$, be the coefficients corresponding to $v^+$, $v^-$. We denote $\mathcal V = \{ v^{v^+}_i : i\in \{1, \ldots, l^{v^+}\}\} \cup \{ v^{v^-}_i : i\in \{1, \ldots, l^{v^-}\}\}$ and set $l^v = |\mathcal V|+1$. Let $v^v_i$, $i\in \{1, \ldots, l^v-1\}$, be such that $\{ v^v_i : i\in\{1, \ldots, l^v-1\}\}=\mathcal V$; such an arrangement of the finite set $\mathcal V$ necessarily exists. By the choice of $\mathcal V$ and the induction hypothesis, for any $i\in \{1, \ldots, l^v-1\}$ we may further find $n^v_i \in\mathbb N$, $k-1\geq n^v_i > n$, such that $v^v_i\in 2^{-n^v_i}\mathbb Z \setminus 2^{-n^v_i+1}\mathbb Z$.  If $i\in \{1, \ldots, l^v-1\}$, let us further denote $\nu^v_i = \frac 1 2\sum_{j\in \{1, \ldots, l^{v^+}\},\,v^{v^+}_j=v^v_i} \nu^{v^+}_j +\frac 1 2\sum_{j\in \{1, \ldots, l^{v^-}\}, \,v^{v^-}_j=v^v_i} \nu^{v^-}_j$; it follows from the induction hypothesis that there is at most one index $j\in \{1, \ldots, l^{v^+}\}$ such that $v^{v^+}_j=v^v_i$ (similarly for $v^-$), and thus we obtain $0<\nu^v_i \leq 2^{\alpha (n-n^v_i)}$. 

    We set $n^v_{l^v}=k$, $v^v_{l^v}=v$, $\nu^v_{l^v}=2^{(n-k)\alpha}$, and $\mu^v_1=\frac 1 2 (\mu^{v^+}_1+\mu^{v^-}_1)$, $\mu^v_2=\frac 1 2 (\mu^{v^+}_2+\mu^{v^-}_2)$. Let us remark that $\mu^v_1,\, \mu^v_2 \geq 0$ and $\mu^v_1+\mu^v_2=\frac 1 2 (\mu^{v^+}_1+\mu^{v^-}_1)+\frac 1 2 (\mu^{v^+}_2+\mu^{v^-}_2)= 1$ since $\mu^{v^+}_1+\mu^{v^+}_2 = \mu^{v^-}_1+\mu^{v^-}_2=1$, by the induction hypothesis.
    
    Continuing~\eqref{eq:hat_inductive_expansion}, the above coefficients let us rewrite
    \[
        \begin{split}
            2^{n\alpha}\delta^j_x(v)&= 2^{(n-k)\alpha}2^{k\alpha}\left(\delta^j_x (v) - \frac 1 2 (\delta^j_x (v^-)+\delta^j_x (v^+))\right)\\
            &\phantom{=}+ 2^{n\alpha-1}\delta^j_x(v^-)+ 2^{n\alpha-1}\delta^j_x(v^+)\\
            &= \mu^v_1 2^{n\alpha}\delta^j_x(u_1)+\mu^v_2 2^{n\alpha}\delta^j_x(u_2)\\ 
            &\phantom{=}+\sum_{i=1}^{l^v-1} \nu^v_i 2^{n^v_i\alpha}\left(\delta^j_x (v^v_i) - \frac 1 2 (\delta^j_x ((v^v_i)^-)+\delta^j_x ((v^v_i)^+))\right)\\
            &\phantom{=}+2^{(n-k)\alpha}2^{k\alpha}\left(\delta^j_x (v) - \frac 1 2 (\delta^j_x (v^-)+\delta^j_x (v^+))\right)\\
            &= \mu^v_1 2^{n\alpha}\delta^j_x(u_1)+\mu^v_2 2^{n\alpha}\delta^j_x(u_2)\\
            &\phantom{=}+\sum_{i=1}^{l^v} \nu^v_i 2^{n^v_i\alpha}\left(\delta^j_x (v^v_i) - \frac 1 2 (\delta^j_x ((v^v_i)^-)+\delta^j_x ((v^v_i)^+))\right)\text.
        \end{split}
    \]

    We note that for any $v\in [u_1, u_2] \cap 2^{-k}\mathbb Z$ and for the elements $\mu^v_1$, $\mu^v_2$, $l^v$, $n^v_i$, $v^v_i$, and $\nu^v_i$, where $i \in \{1, \ldots, l^v\}$, the~\crefrange{it:hat_cd_mu}{it:hat_cd_inclusion} now follow at once if $v\in 2^{-k+1}\mathbb Z$ by the induction hypothesis and were otherwise established during the construction whenever $v\in 2^{-k}\mathbb Z \setminus 2^{-k+1}\mathbb Z$.

    To establish~\cref{it:hat_cd_count}, we first remark the conclusion is satisfied for any $v\in 2^{-k+1}\mathbb Z$ by the construction and the induction hypothesis. Let us pick $v\in [u_1, u_2] \cap 2^{-k}\mathbb Z \setminus 2^{-k+1}\mathbb Z$.

    Since we have $v^-,\,v^+ \in 2^{-k+1}\mathbb Z$, $|v^+-v^-|=2^{-k+1}$, it follows that $\{v^-, v^+\} \cap 2^{-k+2}\mathbb Z \neq \emptyset$. Hence, there exist $u$, $u^\prime$ such that $u \in \{v^-, v^+\} \cap 2^{-k+1}\mathbb Z \setminus 2^{-k+2}\mathbb Z$, $u^\prime\in\{v^-, v^+\} \cap 2^{-k+2}\mathbb Z$. By the induction hypothesis and~\cref{it:hat_cd_inclusion} for $u$ and $u^\prime$, we get that $\{ v^{u^\prime}_i : i\in \{1, \ldots, l^{u^\prime}\}\} \subseteq \{ v^{u}_i : i\in \{1, \ldots, l^{u}\}\}$. Since $\{ v^{v}_i : i\in \{1, \ldots, l^{v}\}\} \cap 2^{-k+1}\mathbb Z = \{ v^{v^+}_i : i\in \{1, \ldots, l^{v^+}\}\} \cup \{ v^{v^-}_i : i\in \{1, \ldots, l^{v^-}\}\}$ by the construction, we deduce that \[ \{ v^{v}_i : i\in \{1, \ldots, l^{v}\}\} \cap 2^{-k+1}\mathbb Z = \{ v^{u}_i : i\in \{1, \ldots, l^{u}\}\} \text. \]

    The conclusion of~\cref{it:hat_cd_count} now follows from the induction hypothesis for $u$ whenever $m \in \mathbb N$, $k-1\geq m>n$. Since also $\{v^{v}_i : i\in \{1, \ldots, l^{v}\}\} \setminus 2^{-k+1}\mathbb Z = \{v\}$, ~\cref{it:hat_cd_count} is satisfied. The proof of the induction step is now complete.

    Pick $k\in\mathbb N$, $k\geq n$, and $v\in [u_1, u_2] \cap 2^{-k}\mathbb Z$. We note that $n^v_i > n$, where $i \in \{ 1, \ldots, l^v\}$, by~\cref{it:hat_cd_mu}, and $|\{ i \in \{ 1, \ldots, l^v\} : n^v_i=j \}|\leq 1$, where $j\in \mathbb N$, $j>n$, by~\cref{it:hat_cd_once,it:hat_cd_where,it:hat_cd_count}. Likewise, \cref{it:hat_cd_ineq} shows that $0<\nu^v_i\leq 2^{(n-n^v_i)\alpha}$ for every $i \in \{1, \ldots, l^v\}$; hence,
    \begin{equation*}
        \begin{split}
            \left(\sum_{i=1}^{l^v} |\nu^v_i|^p\right)^{1/p} &\leq \left(\sum_{i=1}^{l^v} 2^{p(n-n^v_i)\alpha}\right)^{1/p}\\
            &< \left(\sum_{i=1}^{\infty} 2^{-p\alpha i}\right)^{1/p} = 2^{-\alpha}\left(\frac 1 {1-2^{-p\alpha}}\right)^{1/p} \text.
        \end{split}
    \end{equation*}

    The claim is established.
\end{proof}
\smallskip

\begin{lemma} \label{lemma:step}
    Let $d\in\mathbb N$ and $\mathcal M \subseteq \mathcal F_p ([0, 1]^d, |\cdot|^\alpha)$ be such that for any $v\in V_k$, where $k\in\mathbb N_0$, we have $2^{k\alpha}\left(\delta(v)-\sum_{u \in V_{k-1}} \Lambda^d_{2^{-k+1}}(u, v)\delta(u)\right) \in \mathcal M$.

    There exists a constant $\rho^\prime>0$ for which the following holds true. Consider $v=(v_i)_{i=1}^d\in V$ and $i\in \{1, \ldots, d\}$. If $v_i \in 2^{-n}\mathbb Z \setminus 2^{-n+1}\mathbb Z$ for some $n\in\mathbb N$, then $2^{n\alpha}\left( \delta(v)-\frac 1 2 (\delta(v^i_{2^{-n}})+\delta(v^i_{-2^{-n}}))\right) \in \rho^\prime\aconv_p \mathcal M$. Quantitatively, if we set $\rho = \left( C^p(p, 2) + (1+2^{1-p})2^{-p\alpha}\left(\frac 1 {1-2^{-p\alpha}}\right)\right)^{1/p}$, then $\rho'=\rho^d$.
\end{lemma}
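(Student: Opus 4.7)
The proof proceeds by induction on the dimension $d$. For $d = 1$, the hat $2^{n\alpha}(\delta(v) - \tfrac{1}{2}(\delta(v^{+}) + \delta(v^{-})))$ coincides with the multi-resolution element at $v \in V_n \setminus V_{n-1}$: the single coordinate is necessarily active, and $\Lambda^1_{2^{-n+1}}$ collapses to the $\tfrac{1}{2}$-and-$\tfrac{1}{2}$ average at the two neighbors $v \pm 2^{-n} \in V_{n-1}$. The hat is then in $\mathcal M$ directly and the claim holds with $\rho' = 1 \leq \rho$.

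For $d \geq 2$, by the coordinate symmetry of the construction I may assume $i \neq d$. The plan is to apply \Cref{lemma:hat} to the $d$-th coordinate at the coarse scale $n^* = n$ (matching the scale of the hat), picking up a single factor $\rho$, and then dispose of the resulting hats via the inductive hypothesis applied to an effectively $(d-1)$-dimensional sub-problem contributing the remaining factor $\rho^{d-1}$. Choose $u_1, u_2 \in [0, 1] \cap 2^{-n}\mathbb{Z}$ adjacent at distance $2^{-n}$ with $v_d \in [u_1, u_2]$. Because $i \neq d$, the common value $v_d$ of the $d$-th coordinate across the three $\delta$-terms of $h$ forces the same scalars $\mu_1, \mu_2, \nu_r$ and auxiliary points $a_r$ of \Cref{lemma:hat} to govern the expansion of each. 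Substituting and regrouping gives
\begin{equation*}
    h \;=\; \mu_1 H^n_i(v^{d\to u_1}) + \mu_2 H^n_i(v^{d\to u_2}) + \sum_r \nu_r\Bigl[H^{n_r}_d(v^{d\to a_r}) - \tfrac{1}{2}\bigl(H^{n_r}_d(v^{i,+;\, d\to a_r}) + H^{n_r}_d(v^{i,-;\, d\to a_r})\bigr)\Bigr],
\end{equation*}
where $H^m_j(w) = 2^{m\alpha}(\delta(w) - \tfrac{1}{2}(\delta(w^{j,+}) + \delta(w^{j,-})))$ denotes the hat in direction $j$ at $w$ at scale $m$, and $v^{d\to c}$ denotes $v$ with $d$-th coordinate replaced by $c$. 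The identity is clean because of the cancellation $2^{(n_r - n)\alpha} \cdot 2^{n\alpha} \cdot 2^{-n_r\alpha} = 1$, which absorbs the outer scaling of $h$, the scale factor carried by the $\nu_r$-correction in \Cref{lemma:hat}, and the inverse scaling needed to view the result as a hat in direction $d$ at its natural scale $n_r$.

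The $p$-th power of the coefficient vector on the right-hand side is bounded by
\begin{equation*}
    \mu_1^p + \mu_2^p + (1 + 2^{1-p}) \sum_r |\nu_r|^p \;\leq\; 2^{1-p} + (1 + 2^{1-p}) \cdot \frac{2^{-p\alpha}}{1 - 2^{-p\alpha}} \;=\; \rho^p,
\end{equation*}
using $\mu_1^p + \mu_2^p \leq 2^{1-p} = C^p(p, 2)$, the elementary identity $1 + 2 \cdot (1/2)^p = 1 + 2^{1-p}$ on each correction triple, and the norm bound on $(\sum |\nu_r|^p)^{1/p}$ from \Cref{lemma:hat}. For each hat on the right I then argue as follows: the main hats $H^n_i(v^{d\to u_j})$ have $d$-th coordinate locked in $2^{-n}\mathbb{Z}$, so by the isometric slice embedding $\bar w \mapsto (\bar w, u_j)$ they correspond to $(d-1)$-dimensional hats whose multi-resolution elements identify with elements of $\mathcal M$; the correction hats $H^{n_r}_d(\cdot)$, being in direction $d$, are handled by an analogous reduction with the roles of $i$ and $d$ interchanged. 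By the inductive hypothesis each such hat lies in $\rho^{d-1}\aconv_p \mathcal M$, and combining with the factor $\rho$ from the application of \Cref{lemma:hat} yields $\rho' = \rho^d$.

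The main obstacle lies in the slice-embedding step: the embedded multi-resolution elements of the slice $\{x_d = u_j\}$ coincide with elements of $\mathcal M^{(d)}$ only when the scale of $u_j$ is strictly coarser than the scale of the embedded element. The boundary case $u_j \in \{0, 1\}$ is immediate, but for generic $v_d$ at least one endpoint lies at scale exactly $n$, and then the reduction must be supplemented by the telescoping identity $M^{(d)}_v = H^k_i(v) + \sum_{r \geq 2}(\text{hats in the remaining active directions at coarser neighbors})$ for multi-active-coordinate points, itself resolved by a sub-induction on the number of active coordinates. Threading all the coefficients through this nested reduction while keeping the accumulated constant under $\rho^d$ is the principal bookkeeping task.
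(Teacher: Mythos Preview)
Your decomposition identity via \Cref{lemma:hat} is correct and in fact coincides with the identity the paper derives in its inductive step. The gap is in how you organize the induction. You induct on the ambient dimension $d$ and attempt to invoke the $(d-1)$-dimensional statement on the slice $\{x_d=u_j\}$. For this to work, the embedded $(d-1)$-dimensional multi-resolution elements must lie in $\mathcal M$; but the image of the $(d-1)$-dimensional element at $\bar v\in V^{(d-1)}_k$ under the slice embedding is
\[
2^{k\alpha}\Bigl(\delta(\bar v,u_j)-\sum_{\bar u\in V^{(d-1)}_{k-1}}\Lambda^{d-1}_{2^{-k+1}}(\bar u,\bar v)\,\delta(\bar u,u_j)\Bigr),
\]
and this equals the $d$-dimensional element $2^{k\alpha}\bigl(\delta(\bar v,u_j)-\sum_{u\in V_{k-1}}\Lambda^{d}_{2^{-k+1}}(u,(\bar v,u_j))\,\delta(u)\bigr)\in\mathcal M$ only when $u_j\in 2^{-k+1}\mathbb Z$. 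Since at least one of $u_1,u_2$ sits at scale exactly $n$, the embedded $(d-1)$-dimensional elements at scales $k\leq n$ are \emph{not} in $\mathcal M$, and your appeal to the inductive hypothesis is unjustified precisely at the scales that matter (the base-case scale $n$ is always used). The same obstruction applies to the correction hats $H^{n_r}_d(\cdot)$: these are $d$-dimensional objects, and ``interchanging the roles of $i$ and $d$'' still leaves you slicing at a coordinate of positive scale.

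You recognize this and propose a ``sub-induction on the number of active coordinates'' as the remedy, but you do not carry it out, and you do not show the accumulated constant stays at $\rho^d$. In fact that sub-induction \emph{is} the paper's proof: the paper fixes $d$ once and for all and inducts on $l=|\{j:v_j\notin 2^{-n}\mathbb Z\}|\in\{0,\ldots,d-1\}$. The base case $l=0$ writes the hat as the $(1,-\tfrac12,-\tfrac12)$-combination of the three multi-resolution elements at $v,\,v^i_{\pm 2^{-n}}\in V_n$, landing in $(1+2^{1-p})^{1/p}\aconv_p\mathcal M\subseteq\rho\,\aconv_p\mathcal M$. The inductive step applies \Cref{lemma:hat} in a coordinate $j$ with $v_j\notin 2^{-n}\mathbb Z$ (not an arbitrary coordinate): the two main hats then have one fewer fine coordinate, and each correction hat is at scale $n_r>n$, so \emph{every} coordinate that was in $2^{-n}\mathbb Z$ is automatically in $2^{-n_r}\mathbb Z$, again leaving at most $l-1$ fine coordinates. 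Both families are covered by the hypothesis for $l-1$, and the coefficient tally gives exactly the factor $\rho$, yielding $\rho^{l+1}\leq\rho^d$. This sidesteps the slice-embedding issue entirely: no change of ambient dimension is ever needed. Once you perform your proposed sub-induction, the outer induction on $d$ is idle; as written, the proposal is an outline with the essential step missing.
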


\begin{proof}
    We prove that for any $l\in \{0, \ldots, d-1\}$, the following claim is true. Let $v=(v_i)_{i=1}^d\in V$. If $i\in \{1, \ldots, d\}$ is such that $v_i \in 2^{-n}\mathbb Z \setminus 2^{-n+1}\mathbb Z$ for some $n\in\mathbb N$ and if $|\{ j\in \{1, \ldots, d\} : v_j \notin 2^{-n}\mathbb Z\}|\leq l$, then $2^{n\alpha}\left( \delta(v)-\frac 1 2 (\delta(v^i_{2^{-n}})+\delta(v^i_{-2^{-n}}))\right) \in \rho^{l+1} \aconv_p \mathcal M$. We proceed by induction on $l$.

    Let $l=0$. We pick $v=(v_i)_{i=1}^d\in V$, $n\in\mathbb N$, and $i\in \{1, \ldots, d\}$ as above. Recall that for any $u=(u_i)_{i=1}^d \in V_{n-1}$, it follows from~\Cref{lemma:cf:coeff_Lambda_properties}~\labelcref{it:Lambda_4} that $\Lambda^d_{2^{-n+1}}(u, v)=\Lambda^{d-1}_{2^{-n+1}}(\pi_d(u), \pi_d(v))\Lambda^1_{2^{-n+1}}(u_i, v_i)$. Hence, $\Lambda^d_{2^{-n+1}}(u, v)=0$ whenever $u_i\notin \{v_i \pm2^{-n}\}$, and now
    \begin{multline*}
            \sum_{u \in V_{n-1}} \Lambda^d_{2^{-n+1}}(u, v)\delta(u)\\
            \begin{aligned}
                &=\sum_{\substack{u=(u_i)_{i=1}^d \in V_{n-1} \\ u_i\in \{v_i \pm2^{-n}\}}} \Lambda^{d-1}_{2^{-n+1}}(\pi_i(u), \pi_i(v))\Lambda^1_{2^{-n+1}}(u_i, v_i)\delta(u)\\
                &=\frac 1 2\sum_{\substack{u=(u_i)_{i=1}^d \in V_{n-1} \\ u_i\in \{v_i \pm2^{-n}\}}} \Lambda^{d-1}_{2^{-n+1}}(\pi_i(u), \pi_i(v))\delta(u) \text.
            \end{aligned}
    \end{multline*}

    Repeating the same argument, we verify 
    \begin{gather*}
        \sum_{u \in V_{n-1}} \Lambda^d_{2^{-n+1}}(u, v^i_{2^{-n}})\delta(u) = \sum_{\substack{u=(u_i)_{i=1}^d \in V_{n-1} \\ u_i= v_i +2^{-n}}} \Lambda^{d-1}_{2^{-n+1}}(\pi_i(u), \pi_i(v))\delta(u) \text,\\
        \sum_{u \in V_{n-1}} \Lambda^d_{2^{-n+1}}(u, v^i_{-2^{-n}})\delta(u) = \sum_{\substack{u=(u_i)_{i=1}^d \in V_{n-1} \\ u_i= v_i -2^{-n}}} \Lambda^{d-1}_{2^{-n+1}}(\pi_i(u), \pi_i(v))\delta(u) \text.
    \end{gather*}

    \pagebreak
    Altogether, we have
    \begin{multline*}
        2^{n\alpha}\left( \delta(v)-\frac 1 2 (\delta(v^i_{2^{-n}})+\delta(v^i_{-2^{-n}}))\right)\\
        \begin{aligned}
            &=2^{n\alpha}\left(\delta(v)-\sum_{u \in V_{n-1}}\Lambda^d_{2^{-n+1}}(u, v)\delta(u)\right)\\
            &\phantom{=}-2^{n\alpha-1}\left(\delta(v^i_{2^{-n}})-\sum_{u\in V_{n-1}}\Lambda^d_{2^{-n+1}}(u, v^i_{2^{-n}})\delta(u)\right)\\
            &\phantom{=}-2^{n\alpha-1}\left(\delta(v^i_{-2^{-n}})-\sum_{u \in V_{n-1}}\Lambda^d_{2^{-n+1}}(u, v^i_{-2^{-n}})\delta(u)\right)\text.
        \end{aligned} 
    \end{multline*}

    Note that by the assumption, $2^{n\alpha}\left(\delta(v^\prime)-\sum_{u \in V_{n-1}} \Lambda^d_{2^{-n+1}}(u, v^\prime)\delta(u)\right)$ for each $v^\prime \in\{v, v^i_{\pm 2^{-n}}\} \subseteq V_n$. The intermediate claim now follows as
    \begin{equation*}
        \begin{split}
            2^{n\alpha}\left( \delta(v)-\frac 1 2 (\delta(v^i_{2^{-n}})+\delta(v^i_{-2^{-n}}))\right)&\in \left(1+2^{1-p}\right)^{1/p}\aconv_p \mathcal M\\
            &\subseteq \rho \aconv_p \mathcal M \text.
        \end{split}
    \end{equation*}

    Let $l\in\{1, \ldots, d-1\}$ be such that the claim holds for $l-1$. Pick $v=(v_i)_{i=1}^d\in V$, $n\in\mathbb N$, and $i\in \{1, \ldots, d\}$ such that $v_i \in 2^{-n}\mathbb Z \setminus 2^{-n+1}\mathbb Z$. If $\{ j\in \{1, \ldots, d\} : v_j \notin 2^{-n}\mathbb Z\}$ is empty, the conclusion follows from the induction hypothesis; we may thus further assume there is some $j\in\{1, \ldots, d\}$ for which $v_j \notin 2^{-n}\mathbb Z$.
    
    We consider $u_1,\, u_2 \in [0, 1]\cap 2^{-n}\mathbb Z$ such that $u_1<v_j<u_2$ and $|u_1-u_2|=2^{-n}$. Let us further denote $v' = (v_1, \ldots, v_{j-1}, u_1, v_{j+1}, \ldots, v_d)$, $v'' = (v_1, \ldots, v_{j-1}, u_2, v_{j+1}, \ldots, v_d)$. Observe that $i$, $n$ and $v', \, v''\in V$, respectively, satisfy the induction hypothesis for $l-1$; hence,
    \begin{multline}\label{eq:step_inclusion_hypothesis}
        \left\{ 2^{n\alpha}\left( \delta(w)-\frac 1 2 (\delta(w^i_{2^{-n}})+\delta(w^i_{-2^{-n}}))\right) : w\in \{v', v''\} \right\}\\\subseteq \rho^l \aconv_p \mathcal M \text.
    \end{multline}

    We set $m=i$ if and only if $j>i$ and $m=i-1$ otherwise, and we define $x=(v_1, \ldots, v_{j-1}, v_{j+1}, \ldots, v_d)$ and $x_1=x^m_{2^{-n}}$, $x_2=x^m_{-2^{-n}}$. Let also $\mu_1, \mu_2 \in \mathbb R$, $l \in \mathbb N_0$, $\nu_r \in \mathbb R$, $n_r \in \mathbb N$, $n_r>n$, and $w_r \in 2^{-n_r}\mathbb Z \setminus 2^{-n_r+1}\mathbb Z$, where $r \in \{1, \ldots, l\}$, be the coefficients from~\Cref{lemma:hat} associated with $u_1$, $u_2$, and $v_j$. It follows that 
    \begin{align*}
        2^{n\alpha}\delta^j_x(v_j) &= \mu_1 2^{n\alpha}\delta^j_x(u_1)+\mu_2 2^{n\alpha}\delta^j_x(u_2)\\
            &\phantom{=}+\sum_{r=1}^l \nu_r 2^{\alpha n_r}\left(\delta^j_x (w_r) - \frac 1 2 (\delta^j_x (w_r^-)+\delta^j_x (w_r^+))\right) \text,\\
        2^{n\alpha-1}\delta^j_{x_1}(v_j) &= \mu_1 2^{n\alpha-1}\delta^j_{x_1}(u_1)+\mu_2 2^{n\alpha-1}\delta^j_{x_1}(u_2)\\
            &\phantom{=}+\sum_{r=1}^l \nu_r 2^{\alpha n_r-1}\left(\delta^j_{x_1} (w_r) - \frac 1 2 (\delta^j_{x_1} (w_r^-)+\delta^j_{x_1} (w_r^+))\right) \text,\\
        \intertext{and}
        2^{n\alpha-1}\delta^j_{x_2}(v_j) &= \mu_1 2^{n\alpha-1}\delta^j_{x_2}(u_1)+\mu_2 2^{n\alpha-1}\delta^j_{x_2}(u_2)\\
            &\phantom{=}+\sum_{r=1}^l \nu_r 2^{\alpha n_r-1}\left(\delta^j_{x_2} (w_r) - \frac 1 2 (\delta^j_{x_2} (w_r^-)+\delta^j_{x_2} (w_r^+))\right) \text.
    \end{align*}

    Pick $r\in \{1, \ldots, l\}$ and recall that $w_r \in 2^{-n_r}\mathbb Z\setminus 2^{-n}\mathbb Z$. If $y=(y_i)_{i=1}^{d-1}\in\{x, x_1, x_2\}$, we set $y^\prime = (y^\prime_i)_{i=1}^{d}=(y_1, \ldots, y_{j-1}, w_r, y_{j+1}, \ldots, y_d)$; it follows that $|\{ k\in \{1, \ldots, d\} : y^\prime_k \notin 2^{-n_r}\mathbb Z\}| \leq l-1$. By the induction hypothesis, \[ 2^{\alpha n_r}\left(\delta^j_{y} (w_r) - \frac 1 2 (\delta^j_{y} (w_r^-)+\delta^j_{y} (w_r^+))\right) \in \rho^l \aconv_p \mathcal M \text.\]
    
    Using the estimate from~\Cref{lemma:hat}, we obtain for any $y\in \{x, x_1, x_2\}$
    \begin{multline}\label{eq:step_inclusion_hat}
        \sum_{r=1}^l \nu_r 2^{\alpha n_r}\left(\delta^j_{y} (w_r) - \frac 1 2 (\delta^j_{y} (w_r^-)+\delta^j_{y} (w_r^+))\right) \\\in 2^{-\alpha}\left(\frac 1 {1-2^{-p\alpha}}\right)^{1/p}\rho^l \aconv_p \mathcal M \text.
    \end{multline}

    We note that $\delta^j_x(v_j) = \delta(v)$; similarly, 
    \begin{align*}
        \delta^j_x(u_1) &= \delta(v') & \delta^j_x(u_2) &= \delta(v'')\\
        \delta(v^i_{2^{-n}}) &=\delta^j_{x_1}(v_j) & \delta(v^i_{-2^{-n}}) &=\delta^j_{x_2}(v_j)\\
        \delta^j_{x_1}(u_1)&=\delta((v')^i_{2^{-n}}) & \delta^j_{x_1}(u_2)&=\delta((v'')^i_{2^{-n}})\\
        \delta^j_{x_2}(u_1)&=\delta((v')^i_{-2^{-n}}) & \delta^j_{x_2}(u_2)&=\delta((v'')^i_{-2^{-n}}) \text.
    \end{align*}
    
    We may now rewrite
    \begin{multline*}
        2^{n\alpha}\left( \delta(v)-\frac 1 2 (\delta(v^i_{2^{-n}})+\delta(v^i_{-2^{-n}}))\right) \\
        \begin{aligned}
            &= \mu_1 2^{n\alpha}\delta^j_x(u_1)+\mu_2 2^{n\alpha}\delta^j_x(u_2)\\
            &\phantom{=}+\sum_{r=1}^l \nu_r 2^{\alpha n_r}\left(\delta^j_x (w_r) - \frac 1 2 (\delta^j_x (w_r^-)+\delta^j_x (w_r^+))\right)\\
            &\phantom{=}-\mu_1 2^{n\alpha-1}\delta^j_{x_1}(u_1)+\mu_2 2^{n\alpha-1}\delta^j_{x_1}(u_2)\\
            &\phantom{=}-\sum_{r=1}^l \nu_r 2^{\alpha n_r-1}\left(\delta^j_{x_1} (w_r) - \frac 1 2 (\delta^j_{x_1} (w_r^-)+\delta^j_{x_1} (w_r^+))\right)\\
            &\phantom{=}-\mu_1 2^{n\alpha-1}\delta^j_{x_2}(u_1)+\mu_2 2^{n\alpha-1}\delta^j_{x_2}(u_2)\\
            &\phantom{=}-\sum_{r=1}^l \nu_r 2^{\alpha n_r-1}\left(\delta^j_{x_2} (w_r) - \frac 1 2 (\delta^j_{x_2} (w_r^-)+\delta^j_{x_1} (w_r^+))\right) \text,
        \end{aligned}
    \end{multline*}
    and
    \begin{multline*}
            \mu_1 2^{n\alpha}\left(\delta^j_x(u_1)-\frac 1 2 (\delta^j_{x_1}(u_1)+\delta^j_{x_2}(u_1))\right) \\
            = \mu_1 2^{n\alpha}\left( \delta(v')-\frac 1 2 (\delta((v')^i_{2^{-n}})+\delta((v')^i_{-2^{-n}}))\right) \text,
    \end{multline*}
    \begin{multline*}
            \mu_2 2^{n\alpha}\left(\delta^j_x(u_2)-\frac 1 2 (\delta^j_{x_1}(u_2)+\delta^j_{x_2}(u_2))\right) \\
        = \mu_2 2^{n\alpha}\left(\delta(v'')-\frac 1 2 (\delta((v'')^i_{2^{-n}})+\delta((v'')^i_{-2^{-n}}))\right) \text.
    \end{multline*}

    Substituting the last two terms, let us rewrite
    \begin{multline*}
        2^{n\alpha}\left( \delta(v)-\frac 1 2 (\delta(v^i_{2^{-n}})+\delta(v^i_{-2^{-n}}))\right)\\
        \begin{aligned}
        &=\mu_1 2^{n\alpha}\left(\delta(v')-\frac 1 2 (\delta((v')^i_{2^{-n}})+\delta((v')^i_{-2^{-n}}))\right) \\
        &\phantom{=}+\mu_2 2^{n\alpha}\left(\delta(v'')-\frac 1 2 (\delta((v'')^i_{2^{-n}})+\delta((v'')^i_{-2^{-n}}))\right)\\
        &\phantom{=}+\sum_{r=1}^l \nu_r 2^{\alpha n_r}\left(\delta^j_x (w_r) - \frac 1 2 (\delta^j_x (w_r^-)+\delta^j_x (w_r^+))\right)\\
        &\phantom{=}-\frac 1 2\sum_{r=1}^l \nu_r 2^{\alpha n_r}\left(\delta^j_{x_1} (w_r) - \frac 1 2 (\delta^j_{x_1} (w_r^-)+\delta^j_{x_1} (w_r^+))\right)\\
        &\phantom{=}-\frac 1 2\sum_{r=1}^l \nu_r 2^{\alpha n_r}\left(\delta^j_{x_2} (w_r) - \frac 1 2 (\delta^j_{x_2} (w_r^-)+\delta^j_{x_1} (w_r^+))\right) \text.
        \end{aligned}
    \end{multline*}

    Appealing to~\labelcref{eq:step_inclusion_hypothesis,eq:step_inclusion_hat}, we conclude 
    \begin{multline*}
        2^{n\alpha}\left( \delta(v)-\frac 1 2 (\delta(v^i_{2^{-n}})+\delta(v^i_{-2^{-n}}))\right)\\
        \begin{aligned}
            &\in \left(\mu_1^p + \mu_2^p + (1+2^{1-p}) 2^{-p\alpha} \frac 1 {1-2^{-p\alpha}}\right)^{1/p} \rho^{l} \aconv_p \mathcal M \\
            &\subseteq \rho^{l+1} \aconv_p \mathcal M \text.
        \end{aligned}
    \end{multline*}

    This verifies the induction step, and thus completes the proof.
\end{proof}

\subsubsection{One~Geometrical Consideration}

We develop a~result which, for $d=1$ in particular, implies there exists $\rho\in\mathbb R$ such that for any $u,\,v \in [0, 1] \cap 2^{-n}\mathbb Z$, where $n\in\mathbb N_0$, we get $\frac{\delta(u)-\delta(v)}{|u-v|^\alpha} \in \rho\aconv_p \mathcal \iota(\{ e_x : x\in V\})$. 

First we establish the conclusion under the assumption that $|u-v|=2^{-n}$.

\begin{lemma} \label{lemma:line}
    Let $d\in\mathbb N$, $i\in\{1, \ldots, d\}$, $x=(x_j)_{j=1}^{d-1}\in [0, 1]^{d-1}$, and $\mathcal M \subseteq \mathcal F_p ([0, 1]^d, |\cdot|^\alpha)$. Assume that $\delta^i_x (0),\, \delta^i_x (1) \in \mathcal M$ and for any $y\in 2^{-k}\mathbb Z \setminus 2^{-k+1}\mathbb Z$, where $k\in\mathbb N$, it holds that $2^{\alpha k}\left(\delta^i_x (y) - \frac 1 2 (\delta^i_x (y^-)+\delta^i_x (y^+))\right) \in \mathcal M$. 
    
    Whenever $u,\,v \in [0, 1] \cap 2^{-n}\mathbb Z$ and $|u-v|=2^{-n}$ for some $n\in\mathbb N_0$, we have $\frac{\delta^i_x(u)-\delta^i_x(v)}{|u-v|^\alpha} \in 2^{1/p}\left(\frac 1 {1-2^{p(\alpha-1)}}\right)^{1/p}\aconv_p \mathcal M$.
\end{lemma}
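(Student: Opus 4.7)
The plan is to proceed by induction on $n\in\mathbb{N}_0$, with the quantitative claim being that any such pair $u,v$ admits a~decomposition $\delta^i_x(u)-\delta^i_x(v) = 2^{-\alpha n}\sum_r a_r m_r$ with $m_r\in\mathcal M$ and $\sum_r|a_r|^p\leq C^p$, where $C=2^{1/p}(1-2^{p(\alpha-1)})^{-1/p}$. The key combinatorial observation underpinning the step is that for $u,v\in [0,1]\cap 2^{-n}\mathbb{Z}$ with $|u-v|=2^{-n}$, exactly one of $u,v$ belongs to $2^{-n}\mathbb{Z}\setminus 2^{-n+1}\mathbb{Z}$ (call it $w$) and the other (call it $z$) lies in $2^{-n+1}\mathbb{Z}$, since two elements of $2^{-n}\mathbb{Z}\setminus 2^{-n+1}\mathbb{Z}$ differ by a~multiple of $2^{-n+1}$. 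Necessarily $z\in\{w^-,w^+\}$.

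For the base case $n=0$, both $u$ and $v$ lie in $\{0,1\}$, so $\delta^i_x(u)-\delta^i_x(v)=\pm(\delta^i_x(1)-\delta^i_x(0))$ is a~combination of two elements of~$\mathcal M$ with $p$-norming total $2$, and this is bounded by $C^p$ since $1-2^{p(\alpha-1)}\leq 1$.

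For the inductive step, I would use the hat assumption at level $n$ to write $\delta^i_x(w)-\tfrac12(\delta^i_x(w^-)+\delta^i_x(w^+)) = 2^{-\alpha n} m$ for some $m\in\mathcal M$, rearrange to obtain
\[
\delta^i_x(w)-\delta^i_x(z) \;=\; \tfrac12\bigl(\delta^i_x(z')-\delta^i_x(z)\bigr)+2^{-\alpha n} m,
\]
where $z'\in\{w^-,w^+\}\setminus\{z\}$, and then apply the induction hypothesis to the pair $z,z'\in 2^{-n+1}\mathbb{Z}$ with $|z-z'|=2^{-n+1}$. This yields $\delta^i_x(z')-\delta^i_x(z)=2^{-\alpha(n-1)}\sum_r a_r m_r$ with $\sum_r|a_r|^p\leq C^p$, and substituting gives the coefficient sum
\[
\tfrac12\cdot 2^{-\alpha(n-1)}/2^{-\alpha n}\text{-rescaled contribution}+1 \;=\; 2^{p(\alpha-1)}\sum_r|a_r|^p+1 \;\leq\; 2^{p(\alpha-1)}C^p+1.
\]

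The heart of the argument is the fixed-point check $2^{p(\alpha-1)}C^p+1\leq C^p$, which is the reason $C$ is chosen as above; with our value one has $2^{p(\alpha-1)}C^p+1=(2^{p(\alpha-1)}+1)/(1-2^{p(\alpha-1)})\leq 2/(1-2^{p(\alpha-1)})=C^p$ using $2^{p(\alpha-1)}\leq 1$. I do not expect any serious obstacle beyond tracking these exponents carefully and handling both sign cases ($z=w^+$ versus $z=w^-$) symmetrically; no appeal to~\Cref{lemma:hat} or~\Cref{lemma:step} is needed here, the argument is self-contained within the one-dimensional dyadic refinement along the $i$-th axis.
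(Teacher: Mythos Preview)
Your proposal is correct and follows essentially the same inductive scheme as the paper: both hinge on the identity
\[
\frac{\delta^i_x(u)-\delta^i_x(v)}{|u-v|^\alpha}=\pm\,2^{n\alpha}\Bigl(\delta^i_x(w)-\tfrac12\bigl(\delta^i_x(w^+)+\delta^i_x(w^-)\bigr)\Bigr)\pm 2^{\alpha-1}\,\frac{\delta^i_x(w^+)-\delta^i_x(w^-)}{|w^+-w^-|^\alpha},
\]
which is exactly the paper's equation~\eqref{eq:line_combination}. The only presentational difference is that the paper unrolls the recursion into an explicit expansion~\eqref{eq:line_expansion} and then bounds a finite geometric sum, whereas you close the induction directly via the fixed-point inequality $2^{p(\alpha-1)}C^p+1\leq C^p$; both routes give the same constant.
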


\begin{proof}
    Given $n \in \mathbb N_0$, let us denote \[\mathcal M_n = \left\{ \frac{\delta^i_x(u)-\delta^i_x(v)}{|u-v|^\alpha} : u,\,v \in [0, 1] \cap 2^{-n}\mathbb Z,\,|u-v|=2^{-n}\right\} \text.\]
    
    We claim that for any $y \in \mathcal M_n$, where $n\in\mathbb N_0$, there exist coefficients $\mu^y_j \in \{-1, 1\}$, where $j\in \{0, \ldots, n\}$, and elements $y^y_j \in 2^{-j}\mathbb Z \setminus 2^{-j+1}\mathbb Z$, where $j\in\{1, \ldots, n\}$, such that     
    \begin{equation} \label{eq:line_expansion}
        \begin{split}
            y&=\mu^y_0 2^{n(\alpha-1)}(\delta^i_x (1)-\delta^i_x (0))\\
            &\phantom{=} + \sum_{j=1}^n \mu^y_j 2^{(n-j)(\alpha-1)}2^{j\alpha}\left(\delta^i_x(y^y_j) - \frac 1 2 (\delta^i_x((y^y_j)^+)+\delta^i_x((y^y_j)^-))\right).
        \end{split}
    \end{equation}

    We proceed by induction on $n$. To that end, we note the conclusion is trivial in the case $n=0$.

    Let $n\in \mathbb N$ be such that the claim holds for $n-1$, and pick $y = \frac{\delta^i_x(u)-\delta^i_x(v)}{|u-v|^\alpha}\in \mathcal M_n$ for some $u,\,v \in [0, 1] \cap 2^{-n}\mathbb Z$, $|u-v|=2^{-n}$. As $\{u, v\} \cap 2^{-n}\mathbb Z \setminus 2^{-n+1}\mathbb Z \neq \emptyset$, there exists $w \in 2^{-n}\mathbb Z \setminus 2^{-n+1}\mathbb Z$ for which \[y \in \left\{ \pm\frac{\delta^i_x(w)-\delta^i_x(w^+)}{|w-w^+|^\alpha}, \pm\frac{\delta^i_x(w)-\delta^i_x(w^-)}{|w-w^-|^\alpha} \right\} \text.\] 
    
    Similarly, there exist $\nu_1,\,\nu_2\in\{-1, 1\}$ such that 
    \begin{equation}\label{eq:line_combination}
        \begin{split}
            \frac{\delta^i_x(u)-\delta^i_x(v)}{|u-v|^\alpha} &= \nu_1 2^{n\alpha}\left(\delta^i_x(w) - \frac 1 2 (\delta^i_x(w^+)+\delta^i_x(w^-))\right)\\
            &\phantom{=} + \nu_2 2^{\alpha-1}\frac{\delta^i_x(w^+)-\delta^i_x(w^-)}{|w^+-w^-|^\alpha}.
        \end{split}
    \end{equation}

    Denote $z=\frac{\delta^i_x(w^+)-\delta^i_x(w^-)}{|w^+-w^-|^\alpha}\in\mathcal M_{n-1}$ and let $\mu^z_j \in \{-1, 1\}$, where $j\in \{0, \ldots, n-1\}$, and $y^z_j \in 2^{-j}\mathbb Z \setminus 2^{-j+1}\mathbb Z$, where $j\in\{1, \ldots, n-1\}$, be the coefficients from the inductive hypothesis. We may now define $\mu^y_j=\nu_2\mu^z_j$, where $j\in \{0, \ldots, n-1\}$, and $y^y_j=y^z_j$, where $j\in \{1, \ldots, n-1\}$. Similarly, let $\mu^y_n=\nu_1$, $y^y_n=w$.
    
    It follows from the construction that $\{\mu^y_j : j\in \{0, \ldots, n\}\} \subseteq \{-1, 1\}$ and $y^y_j \in 2^{-j}\mathbb Z \setminus 2^{-j+1}\mathbb Z$ for any $j\in\{1, \ldots, n\}$. By~\eqref{eq:line_combination} we easily verify
    \[ 
        \begin{split}
            y&=\mu^y_0 2^{n(\alpha-1)}(\delta^i_x (1)-\delta^i_x (0))\\
            &\phantom{=} + \sum_{j=1}^n \mu^y_j 2^{(n-j)(\alpha-1)}2^{j\alpha}\left(\delta^i_x(y^y_j) - \frac 1 2 (\delta^i_x((y^y_j)^+)+\delta^i_x((y^y_j)^-))\right)\text.
        \end{split}
    \]
    
    As $y\in\mathcal M_n$ was arbitrary, this concludes the induction step.

    Let $y\in\mathcal M_n$, where $n\in\mathbb N_0$. We pick $\mu^y_j \in \{-1, 1\}$, where $j\in \{0, \ldots, n\}$, and $y^y_j \in 2^{-j}\mathbb Z \setminus 2^{-j+1}\mathbb Z$, where $j\in\{1, \ldots, n\}$, as found in the previous part. Recall that by the assumption, $2^{j\alpha}\left(\delta^i_x(y^y_j) - \frac 1 2 (\delta^i_x((y^y_j)^+)+\delta^i_x((y^y_j)^-))\right) \in \mathcal M$ for any $j\in\{1, \ldots, n\}$ and $\delta^i_x (0),\, \delta^i_x (1) \in \mathcal M$. Combining this with~\eqref{eq:line_expansion}, we deduce $y\in \left( 2\cdot2^{p(\alpha-1)n}+\sum_{j=1}^n 2^{p(n-j)(\alpha-1)}\right)^{1/p}\aconv_p \mathcal M$; hence,
    \[ y\in \left( 2\sum_{i=0}^n 2^{pi(\alpha-1)}\right)^{1/p}\aconv_p \mathcal M \subseteq 2^{1/p}\left(\frac 1 {1-2^{p(\alpha-1)}}\right)^{1/p}\aconv_p \mathcal M \text.\] The claim follows.
\end{proof}

We decompose a~general element $\frac{\delta(u)-\delta(v)}{|u-v|^\alpha}$, where $u,\,v \in [0, 1] \cap 2^{-n}\mathbb Z$ for some $n\in\mathbb N_0$, into a~combination of elements which we considered above.

The following technical result is a~refinement of~\cite[Lemma 8.40]{Weaver2018}.

\begin{lemma}\label{lemma:path}
    Let $u,\,v \in [0, 1] \cap 2^{-n}\mathbb Z$ for some $n\in \mathbb N_0$, $u\neq v$. There exist $l\in\mathbb N$ and $a_i \in [u, v] \cap 2^{-n} \mathbb Z$, where $i\in\{1, \ldots, l\}$, such that
    \begin{enumerate}[label=(\roman*), ref=\roman*]
        \item\label{it:path_1} $a_1=u$, $a_l=v$,
        \item\label{it:path_2} $a_i,\, a_{i+1} \in 2^{-k}\mathbb Z$ and $|a_i-a_{i+1}|=2^{-k}$ for some $k \in \{0, \ldots, n\}$, for any $i\in \{1, \ldots, l-1\}$,
        \item\label{it:path_3} $\left( \sum_{i=1}^{l-1} |a_{i+1}-a_i|^{p\alpha} \right)^{1/p} < 2^{1/p}\left(\frac 1 {1-2^{-p\alpha}}\right)^{1/p}|u-v|^\alpha$.
    \end{enumerate}
\end{lemma}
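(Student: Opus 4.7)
Assume without loss of generality that $u<v$. For a dyadic rational $w\in[0,1]$, let the \emph{level} $\ell(w)$ denote the smallest $j\ge 0$ with $w\in 2^{-j}\mathbb Z$, and let $w^*\in[u,v]$ be any point of minimal level $k^*$ (essentially unique, except in the trivial case $[u,v]=[0,1]$ handled directly). My plan is a two-sided \emph{greedy ascent}: from $u_0=u$, iteratively set $u_{i+1}=u_i+2^{-\ell(u_i)}$ so long as the step remains in $[u,v]$; define Phase~2 symmetrically from $v_0=v$ moving leftwards. The proposed path is Phase~1 concatenated with the reverse of Phase~2, meeting at their common endpoint $w^*$.

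I would first argue that both ascents terminate precisely at $w^*$; this is the main technical point. Each step raises coarseness, since writing $u_i=(2m+1)2^{-k_i}$ yields $u_{i+1}=(m+1)2^{-k_i+1}$ at level at most $k_i-1$. Inductively $u_i\le w^*$: when $u_i<w^*$, the constraint $u_i\in[u,v]$ gives $k_i\ge k^*$, so $u_i,w^*\in 2^{-k_i}\mathbb Z$, whence $w^*-u_i\ge 2^{-k_i}$ and the step $u_i+2^{-k_i}\le w^*\le v$ is allowed. The process halts only at $w^*$, since attempting to step further would either exceed $v$ or produce a point in $[u,v]$ of level strictly below $k^*$, contradicting minimality. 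By symmetry the same holds for Phase~2, giving (i) and (ii).

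For (iii), set $d_1=w^*-u$ and $d_2=v-w^*$. The Phase~1 levels $k_0>\cdots>k_{s-1}\ge k^*$ are distinct non-negative integers, and the largest step $2^{-k_{s-1}}=w^*-u_{s-1}\le d_1$ forces $k_{s-1}\ge\lceil-\log_2 d_1\rceil=:j_1$. Bounding the finite sub-sum by the tail of the geometric series,
\[
  \sum_{i=0}^{s-1}2^{-k_i p\alpha}<\sum_{k\ge j_1}2^{-kp\alpha}=\frac{2^{-j_1 p\alpha}}{1-2^{-p\alpha}}\le\frac{d_1^{p\alpha}}{1-2^{-p\alpha}}
\]
(strict whenever $d_1>0$), with the analogous estimate for Phase~2. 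Since $u\ne v$ forces at least one of $d_1,d_2$ positive, adding the two bounds and applying $d_1^{p\alpha}+d_2^{p\alpha}\le 2(v-u)^{p\alpha}$ yields (iii) after raising to the $1/p$-th power. Once the termination claim is in hand, this geometric bookkeeping is essentially automatic, so the delicate step is truly the two-sided identification of $w^*$ as the common meeting point.
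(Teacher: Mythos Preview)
Your proof is correct and arrives at the same dyadic path as the paper, but the two descriptions of that path are organised differently enough to be worth a remark. The paper works \emph{outside-in by level}: it fixes the coarsest point $w^*\in[u,v]$ (their $\mathcal V_{n_0}$), then for each finer level $i$ adjoins the at most two points of $[u,v]\cap 2^{-i}\mathbb Z$ lying just outside the interval already spanned, and finally sorts the union. You instead run a \emph{greedy ascent from each endpoint}, stepping by $2^{-\ell(u_i)}$ until both walks meet at $w^*$. A short check (e.g.\ on $[5/16,7/8]$) shows the two procedures produce exactly the same set of points; your algorithmic phrasing makes property~(ii) immediate from the step rule, whereas the paper extracts it from the inclusion~\eqref{eq:path_expansion}. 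For property~(iii), the paper uses that each level contributes at most two steps and bounds $|u-v|$ below by the largest single step; you bound each phase separately by $d_j^{p\alpha}/(1-2^{-p\alpha})$ and then apply $d_1^{p\alpha}+d_2^{p\alpha}\le 2(v-u)^{p\alpha}$, which is a marginally finer intermediate estimate but yields the same constant. One small point you should make explicit: the ``odd multiple'' argument for strict level decrease requires $u_i\in(0,1)$, which does hold along the walk (the endpoints $0,1$ can occur only as $u$ itself or as $w^*$), but this is worth a sentence.
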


\begin{proof}
    Without loss of generaliy, let $u<v$.
    
    We pick the least $n_0 \in \mathbb N_0 \cup \{-1\}$ for which $[u, v] \cap 2^{-n_0}\mathbb Z \neq \emptyset$ and define the sets $\mathcal V_i \subseteq  2^{-i} \mathbb Z$, where $i\in \{n_0, \ldots, n\}$, as follows. Set $\mathcal V_{n_0} = 2^{-n_0}\mathbb Z \cap [u, v]$. If $i\in \{n_0+1, \ldots, n\}$ and $\mathcal V_{j}$ was defined for all $j\in \{n_0, \ldots, i-1\}$, we take $\mathcal V_i = [u, v] \cap 2^{-i}\mathbb Z \setminus [\min \bigcup_{j\in \{n_0, \ldots, i-1\}}\mathcal V_{j}, \max \bigcup_{j\in \{n_0, \ldots, i-1\}}\mathcal V_{j}]$. 

    We remark that $|\mathcal V_{n_0}|=1$ by the choice of $n_0$ and, moreover, $[u, v] \cap 2^{-i}\mathbb Z \subseteq [\min \bigcup_{j\in \{n_0, \ldots, i\}}\mathcal V_{j}, \max \bigcup_{j\in \{n_0, \ldots, i\}}\mathcal V_{j}]$, where $i\in \{n_0, \ldots, n\}$, by the construction. 
    
    Pick $i\in \{n_0+1, \ldots, n\}$. It follows from the above remark that $\mathcal V_i \cap 2^{-i+1}\mathbb Z = \emptyset$; since $\bigcup_{j\in \{n_0, \ldots, i-1\}}\mathcal V_{j} \subseteq 2^{-i+1}\mathbb Z$, this establishes the inclusion
    \begin{equation}\label{eq:path_expansion}
        \mathcal V_i \subseteq \{\min \bigcup\nolimits_{j\in \{n_0, \ldots, i-1\}}\mathcal V_{j} - 2^{-i}, \max \bigcup\nolimits_{j\in \{n_0, \ldots, i-1\}}\mathcal V_{j}+2^{-i}\} \text.
    \end{equation}

    Denote $\mathcal V = \bigcup_{i\in \{n_0, \ldots, n\}} \mathcal V_i$. It follows that $\mathcal V$ is finite and $\{u, v\} \subseteq \mathcal V$ since $\{u, v\} \subseteq 2^{-n}\mathbb Z$; we may thus define $l=|\mathcal V|$ and find a strictly monotone arrangement $a_i$, where $i\in\{1, \ldots, l\}$, of the set $\mathcal V$ such that $a_1=u$, $a_l=v$. It remains to show that $l$ and $a_i$, where $i\in\{1, \ldots, l\}$, satisfy~\cref{it:path_2,it:path_3}.
    
    To show~\labelcref{it:path_2}, pick $i\in \{1, \ldots, l-1\}$. By~\eqref{eq:path_expansion}, there is $k\in \{n_0+1, \ldots, n\}$ for which $a_i=\min \bigcup_{j\in \{n_0, \ldots, k-1\}}\mathcal V_{j} - 2^{-k}$, $a_{i+1}=\min \bigcup_{j\in \{n_0, \ldots, k-1\}} \mathcal V_j$ or $a_i=\max \bigcup_{j\in \{n_0, \ldots, k-1\}}\mathcal V_{j}$, $a_{i+1}=\min \bigcup_{j\in \{n_0, \ldots, k-1\}}\mathcal V_{j} + 2^{-k}$; it follows that $a_i,\,a_{i+1}\in 2^{-k}\mathbb Z$ and $|a_i-a_{i+1}|=2^{-k}$.
    
    The above argument also shows that the set $\{i\in \{1, \ldots, l-1\}: |a_i-a_{i+1}|=2^{-k}\}$ is empty for any $k\in \mathbb Z$ such that $k\leq n_0$ or $n<k$ and it has at most two elements if $n_0<k\leq n$. We consider the least $k_0\in\mathbb Z$ for which there exists $i\in \{1, \ldots, l-1\}$ with $|a_i-a_{i+1}|= 2^{-k_0}$; since $|u-v|\geq 2^{-k_0}$, we have
    \[
        \begin{split}
            \left( \sum_{i=1}^{l-1} \frac{|a_{i+1}-a_i|^{p\alpha}}{|u-v|^{p\alpha}} \right)^{1/p} &\leq \left( \sum_{i=1}^{l-1} \frac{|a_{i+1}-a_i|^{p\alpha}}{2^{-pk_0\alpha}} \right)^{1/p}\\
            &\leq \left( 2 \sum_{i=k_0}^{n} 2^{p(-i+k_0)\alpha} \right)^{1/p}\\ 
            &<2^{1/p}\left(\frac 1 {1-2^{-p\alpha}}\right)^{1/p}\text,
        \end{split} 
    \]
    which verifies~\cref{it:path_3}. The proof is now complete.
\end{proof}

\begin{lemma} \label{lemma:line_general}
    Let $d\in\mathbb N$, $i\in\{1, \ldots, d\}$, $x=(x_j)_{j=1}^{d-1}\in [0, 1]^{d-1}$, and $\mathcal M \subseteq \mathcal F_p ([0, 1]^d, |\cdot|^\alpha)$ be such that the conclusion of~\Cref{lemma:line} holds true, i.e., whenever $u,\,v \in [0, 1] \cap 2^{-n}\mathbb Z$ and $|u-v|=2^{-n}$ for some $n\in\mathbb N_0$, then $\frac{\delta^i_x(u)-\delta^i_x(v)}{|u-v|^\alpha} \in 2^{1/p}\left(\frac 1 {1-2^{p(\alpha-1)}}\right)^{1/p}\aconv_p \mathcal M$.
    
    Whenever $u,\,v \in [0, 1] \cap 2^{-n}\mathbb Z$ for some $n\in\mathbb N_0$, where $u\neq v$, then $\frac{\delta^i_x(u)-\delta^i_x(v)}{|u-v|^\alpha} \in 2^{2/p}\left(\frac 1 {1-2^{p(\alpha-1)}}\right)^{1/p}\left(\frac 1 {1-2^{-p\alpha}}\right)^{1/p}\aconv_p \mathcal M$.
\end{lemma}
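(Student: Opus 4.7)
The plan is to reduce the general case to the special resolution case $|u-v|=2^{-n}$ already handled by \Cref{lemma:line}, by means of the dyadic path decomposition of \Cref{lemma:path}. Given $u,\,v\in[0,1]\cap 2^{-n}\mathbb Z$ with $u\neq v$, I would first apply \Cref{lemma:path} to produce a finite sequence $a_1=u,\,a_2,\ldots,a_l=v$ of dyadic points whose consecutive pairs satisfy exactly the hypothesis needed to invoke the assumption: for each $j\in\{1,\ldots,l-1\}$ there is some $k\in\{0,\ldots,n\}$ for which $a_j,\,a_{j+1}\in 2^{-k}\mathbb Z$ and $|a_j-a_{j+1}|=2^{-k}$.

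The next step is a telescoping decomposition of $\delta^i_x(u)-\delta^i_x(v)$ along the path. Writing $\delta^i_x(u)-\delta^i_x(v)=\sum_{j=1}^{l-1}\bigl(\delta^i_x(a_j)-\delta^i_x(a_{j+1})\bigr)$ and normalising by $|u-v|^\alpha$, I would express
\[
    \frac{\delta^i_x(u)-\delta^i_x(v)}{|u-v|^\alpha}=\sum_{j=1}^{l-1}\frac{|a_j-a_{j+1}|^\alpha}{|u-v|^\alpha}\cdot\frac{\delta^i_x(a_j)-\delta^i_x(a_{j+1})}{|a_j-a_{j+1}|^\alpha}.
\]
By the assumption on $\mathcal M$, each factor $\frac{\delta^i_x(a_j)-\delta^i_x(a_{j+1})}{|a_j-a_{j+1}|^\alpha}$ lies in $2^{1/p}\bigl(\frac 1{1-2^{p(\alpha-1)}}\bigr)^{1/p}\aconv_p\mathcal M$, whereas the scalar weights $w_j=|a_j-a_{j+1}|^\alpha/|u-v|^\alpha$ satisfy $\bigl(\sum_{j=1}^{l-1}w_j^p\bigr)^{1/p}<2^{1/p}\bigl(\frac 1{1-2^{-p\alpha}}\bigr)^{1/p}$ after dividing condition~(iii) of \Cref{lemma:path} through by $|u-v|^\alpha$.

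To finish, I would invoke the elementary observation that if each $y_j$ lies in $C\aconv_p\mathcal M$ and the scalar weights satisfy $\bigl(\sum_j|w_j|^p\bigr)^{1/p}\leq W$, then $\sum_j w_j y_j\in CW\aconv_p\mathcal M$; this is immediate by expanding each $y_j$ as an absolutely $p$-convex combination of elements of $\mathcal M$ (with coefficient $\ell_p$-norm at most $C$) and collecting the resulting coefficients, whose $p$-th powers sum to at most $C^p W^p$. Multiplying the two constants from the previous paragraph yields exactly $2^{2/p}\bigl(\frac 1{1-2^{p(\alpha-1)}}\bigr)^{1/p}\bigl(\frac 1{1-2^{-p\alpha}}\bigr)^{1/p}$, matching the stated bound. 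I do not anticipate any real obstacle: the two combinatorial ingredients—path existence with the right resolution and the geometric-series control on the weights—are already isolated in \Cref{lemma:path}, so the remaining work is essentially bookkeeping of $p$-convex constants.
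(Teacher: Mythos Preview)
Your proposal is correct and follows essentially the same route as the paper: apply \Cref{lemma:path} to obtain the dyadic path, telescope $\frac{\delta^i_x(u)-\delta^i_x(v)}{|u-v|^\alpha}$ along it, invoke the assumed conclusion of \Cref{lemma:line} on each step, and combine using the $\ell_p$-bound on the weights from~\cref{it:path_3}. The paper's argument is identical up to notation; your explicit remark about why $\sum_j w_j y_j\in CW\aconv_p\mathcal M$ is the only elaboration beyond what the paper writes.
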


\begin{proof}
    Given $u,\,v \in [0, 1] \cap 2^{-n}\mathbb Z$, where $n\in\mathbb N$, $u\neq v$, we pick the coefficients $l$ and $a_j$, where $j\in\{1,\ldots, l\}$, from~\Cref{lemma:path} associated with $v$, $u$, and $n$.

    By~\cref{it:path_1,it:path_3} of~\Cref{lemma:path}, respectively, we have
    \begin{gather*}
        \frac{\delta^i_x(u)-\delta^i_x(v)}{|u-v|^\alpha} = \sum_{j=1}^{l-1} \frac{|a_{j+1}-a_j|^\alpha}{|u-v|^\alpha}\frac{\delta^i_x(a_{j+1})-\delta^i_x(a_j)}{|a_{j+1}-a_j|^\alpha},\\
        \left( \sum_{j=1}^{l-1} \frac{|a_{j+1}-a_j|^{p\alpha}}{|u-v|^{p\alpha}} \right)^{1/p} < 2^{1/p}\left(\frac 1 {1-2^{-p\alpha}}\right)^{1/p} \text.
    \end{gather*}
    By the assumption on $\mathcal M$ and~\cref{it:path_2}, wes obtain
    \[ \frac{\delta^i_x(a_{j+1})-\delta^i_x(a_j)}{|a_{j+1}-a_j|^\alpha}\in 2^{1/p}\left(\frac 1 {1-2^{p(\alpha-1)}}\right)^{1/p}\aconv_p \mathcal M, \quad j\in\{1, \ldots, l-1\} \text; \]
    hence, we get $\frac{\delta^i_x(u)-\delta^i_x(v)}{|u-v|^\alpha} \in 2^{2/p}\left(\frac 1 {1-2^{p(\alpha-1)}}\right)^{1/p}\left(\frac 1 {1-2^{-p\alpha}}\right)^{1/p}\aconv_p \mathcal M$. The proof is complete.
\end{proof}
\smallskip

Drawing on the conclusion of~\Cref{lemma:step}, we generalize the previous result by induction on the dimension of boundary cubes.

\begin{lemma} \label{lemma:combinations}
    Let $d\in\mathbb N$ and $\mathcal M \subseteq \mathcal F_p ([0, 1]^d, |\cdot|^\alpha)$ be such that $\{ \delta(u) : u\in\{0, 1\}^d \} \subseteq \mathcal M$ and the conclusion of~\Cref{lemma:step} holds true, i.e., there exists $\rho^\prime>0$ such that for any $v=(v_i)_{i=1}^d\in V$ and $i\in \{1, \ldots, d\}$, if $v_i \in 2^{-n}\mathbb Z \setminus 2^{-n+1}\mathbb Z$ for some $n\in\mathbb N$, then $2^{n\alpha}\left( \delta(v)-\frac 1 2 (\delta(v^i_{2^{-n}})+\delta(v^i_{-2^{-n}}))\right) \in \rho^\prime\aconv_p \mathcal M$.
    
    There exists $\tau^\prime>0$ such that for any $u,\,v \in V \cup \{ 0 \}$, $u\neq v$, we have $\frac{\delta(u)-\delta(v)}{|u-v|^\alpha} \in \tau^\prime\aconv_p \mathcal M$. Quantitatively, if we denote $\tau=C^\alpha(p\alpha, d) 2^{2/p} \cdot \left(\frac 1 {1-2^{p(\alpha-1)}}\right)^{1/p} \left(\frac 1 {1-2^{-p\alpha}}\right)^{1/p} \cdot (1+(d-1)^{p\alpha})^{1/p}$, then $\tau' = \tau^d \rho'$.
\end{lemma}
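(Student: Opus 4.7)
The plan is to prove by induction on $k \in \{1, \ldots, d\}$ the stronger statement that $\frac{\delta(u) - \delta(v)}{|u-v|^\alpha} \in \tau^k \rho' \aconv_p \mathcal M$ whenever $u, v \in V \cup \{0\}$, $u \neq v$, lie on a common $k$-dimensional axis-aligned face of $[0,1]^d$; the conclusion follows at once since $\tau^k \rho' \leq \tau^d \rho'$.

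For the base case $k=1$, the points $u, v$ lie on a common edge of $[0,1]^d$, whose endpoints are in $\{0,1\}^d \subseteq \mathcal M$; the finite differences along this edge lie in $\rho' \aconv_p \mathcal M$ by the hypothesis inherited from~\Cref{lemma:step}. Running the proof of~\Cref{lemma:line} with $\mathcal M$ replaced by $\rho' \aconv_p \mathcal M$ and then invoking~\Cref{lemma:line_general} yields the bound in $c \rho' \aconv_p \mathcal M$, where $c = 2^{2/p}(1-2^{p(\alpha-1)})^{-1/p}(1-2^{-p\alpha})^{-1/p}$ is the constant from~\Cref{lemma:line_general}; this is majorized by $\tau \rho'$.

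For the inductive step from $k-1$ to $k$, with $k \geq 2$, I would form a telescoping path $u = z^0, z^1, \ldots, z^m = v$ inside the smallest containing face $F$, with $z^{i-1}$ and $z^i$ differing in a single coordinate $j_i$, and decompose
\begin{equation*}
    \frac{\delta(u) - \delta(v)}{|u - v|^\alpha} = \sum_{i=1}^m \frac{|u_{j_i} - v_{j_i}|^\alpha}{|u - v|^\alpha} \cdot \frac{\delta(z^{i-1}) - \delta(z^i)}{|u_{j_i} - v_{j_i}|^\alpha} \text.
\end{equation*}
The coefficients satisfy $\sum_i (|u_{j_i} - v_{j_i}|^\alpha / |u-v|^\alpha)^p \leq C^{p\alpha}(p\alpha, d)$ by the definition of $C(p\alpha, d)$, producing the factor $C^\alpha(p\alpha, d)$ in $\tau$. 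Each summand is a one-dimensional difference along an axis-parallel line $L_i$: the finite differences along $L_i$ lie in $\rho' \aconv_p \mathcal M$, and its two endpoints lie on a $(k-1)$-dimensional face of $[0,1]^d$. By the inductive hypothesis, the difference of each such endpoint from a $\{0,1\}^d$-vertex of that same face belongs to $(d-1)^\alpha \tau^{k-1} \rho' \aconv_p \mathcal M$ (the separation being at most $k-1 \leq d-1$), so each endpoint itself is in $\tau^{k-1} \rho' (1 + (d-1)^{p\alpha})^{1/p} \aconv_p \mathcal M$; this accounts for the factor $(1 + (d-1)^{p\alpha})^{1/p}$ in $\tau$. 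Applying~\Cref{lemma:line_general} with this enlarged ambient set bounds each summand in $c\, \tau^{k-1} \rho' (1 + (d-1)^{p\alpha})^{1/p} \aconv_p \mathcal M$, and reassembling closes the induction: $\frac{\delta(u) - \delta(v)}{|u-v|^\alpha} \in C^\alpha(p\alpha, d) \cdot c \cdot (1 + (d-1)^{p\alpha})^{1/p} \cdot \tau^{k-1} \rho' \aconv_p \mathcal M = \tau^k \rho' \aconv_p \mathcal M$.

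The main obstacle is the bookkeeping of constants that makes the recursion close exactly at $\tau^k$: the form of $\tau$ in the statement is engineered precisely so that multiplying the inductive bound $\tau^{k-1} \rho'$ by the one-dimensional constant $c$, the coefficient factor $C^\alpha(p\alpha, d)$, and the endpoint-correction factor $(1+(d-1)^{p\alpha})^{1/p}$ yields $\tau^k \rho'$. A secondary subtlety is routing the telescope path entirely inside $F$ using only its free directions, so that each one-dimensional line's endpoints indeed sit on a $(k-1)$-dimensional (and not higher-dimensional) face, which is what licenses the invocation of the inductive hypothesis with exponent $k-1$ rather than something larger.
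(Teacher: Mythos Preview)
Your plan is correct and matches the paper's proof essentially line for line: the paper likewise inducts on the dimension $l$ of the smallest face of $[0,1]^d$ containing $u$ and $v$, obtains the bound $\tau^l\rho'$, and in the inductive step handles the single-coordinate case by pushing the line's endpoints onto a $(l-1)$-face via a vertex of $\{0,1\}^d$ (your $(1+(d-1)^{p\alpha})^{1/p}$ correction) before applying \Cref{lemma:line} and \Cref{lemma:line_general}, and then telescopes coordinatewise to pick up the $C^\alpha(p\alpha,d)$ factor. The only cosmetic difference is the order---the paper proves the single-coordinate case first and telescopes afterwards, whereas you telescope first and then treat each piece---but the constants and the logic are identical.
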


\begin{proof}
    We inductively show that for any $l\in\{1, \ldots, d\}$, the following claim is true. Let $u=(u_i)_{i=1}^d,\,v=(v_i)_{i=1}^d\in [0, 1]^d \cap 2^{-n}\mathbb Z^d$, where $n\in\mathbb N_0$, $u\neq v$, and $\mathcal I \subseteq \{1, \ldots, d\}$, $|\mathcal I|=d-l$, be such that $u_i=v_i\in\{0, 1\}$ for any $i\in \mathcal I$. Then $\frac{\delta(u)-\delta(v)}{|u-v|^\alpha} \in \tau^l\rho'\aconv_p \mathcal M$.

    Let us remark that by assuming $u_i=v_i\in\{0, 1\}$ for any $i\in \mathcal I$, where $|\mathcal I|=d-l$, we iteratively develop the result over $l$-faces of~$[0, 1]^d$.
    
    Let $l=1$ and pick $u=(u_i)_{i=1}^d,\, v=(v_i)_{i=1}^d\in [0, 1]^d\cap 2^{-n}\mathbb Z^d$, where $n\in\mathbb N_0$, $u\neq v$, and $i\in\{1, \ldots, d\}$, such that $u_j=v_j\in\{0, 1\}$ for any $j\in\{1, \ldots, d\} \setminus \{i\}$. We may find $x\in\{0, 1\}^{d-1}$ and $u^\prime, \, v^\prime \in [0, 1]\cap 2^{-n}\mathbb Z$ such that $\delta(u)=\delta^i_x (u^\prime)$, $\delta(v)=\delta^i_x (v^\prime)$.
    
    Note that by the assumption, $\delta^i_x (0),\, \delta^i_x (1) \in \mathcal M$, and whenever $q\in 2^{-k}\mathbb Z \setminus 2^{-k+1}\mathbb Z$ for some $k\in\mathbb N$, we have $2^{\alpha k}\left(\delta^i_x (q) - \frac 1 2 (\delta^i_x (q^-)+\delta^i_x (q^+))\right) \in \rho'\aconv_p\mathcal M$. It follows that $d$, $i$, $x$, and $\rho'\aconv_p\mathcal M$ satisfy the assumption of~\Cref{lemma:line}. 
    
    Consequently, by~\Cref{lemma:line_general} for $u^\prime,\, v^\prime\in [0, 1] \cap 2^{-n}\mathbb Z$, $u\neq v$, we obtain \[\frac{\delta(u)-\delta(v)}{|u-v|^\alpha} = \frac{\delta^i_x (u^\prime)-\delta^i_x (v^\prime)}{|u^\prime-v^\prime|^\alpha} \in \tau\rho'\aconv_p \mathcal M \text,\] which establishes the first step.

    Let $l\in \{2, \ldots, d\}$ be such that the claim holds for $l-1$. We pick $u=(u_i)_{i=1}^d,\, v=(v_i)_{i=1}^d\in [0, 1]^d \cap 2^{-n}\mathbb Z^d$, where $n\in\mathbb N_0$, $u\neq v$, and $\mathcal I\subseteq \{1, \ldots, d\}$, $|\mathcal I|=d-l$, such that $u_i=v_i\in\{0, 1\}$ for any $i\in \mathcal I$. Assume first there exists $i\in\{1, \ldots, d\}\setminus \mathcal I$ such that $u_j=v_j$ for any $j\in \{1, \ldots, d\} \setminus \{i\}$. We shall prove that $\frac{\delta(u)-\delta(v)}{|u-v|^\alpha} \in 2^{2/p}\left(\frac 1 {1-2^{p(\alpha-1)}}\right)^{1/p}\left(\frac 1 {1-2^{-p\alpha}}\right)^{1/p}(1+(d-1)^{p\alpha})^{1/p}\tau^{l-1}\rho'\aconv_p \mathcal M$.

    Let $y=(y_j)_{j=1}^d, \, z=(z_j)_{j=1}^d\in [0, 1]^d \cap 2^{-n}\mathbb Z^d$ be such that $y_i=0$, $z_i=1$ and $y_j=z_j=u_j$ for any $j\in\{1, \ldots, d\}\setminus \{i\}$. We further pick $y^\prime=(y^\prime_j)_{j=1}^d, \, z^\prime=(z^\prime_j)_{j=1}^d \in \{0, 1\}^d$ satisfying $y^\prime_j=y_j$ and $z^\prime_j=z_j$ for any $j\in\mathcal I \cup \{i\}$.
    
    If $y\neq y^\prime$, we rewrite $\delta(y) = |y-y'|^\alpha\frac{\delta (y)-\delta(y^\prime)}{|y-y^\prime|^\alpha} + \delta(y')$, where, in particular, $\frac{\delta (y)-\delta(y^\prime)}{|y-y^\prime|^\alpha} \in \tau^{l-1}\rho'\aconv_p \mathcal M$ by the induction hypothesis. Since $\delta(y) = \delta(y')$ in the remaining case and $\delta(y^\prime) \in \mathcal M$ by the assumption on $\mathcal M$, we deduce that $\delta(y) \in (1+(d-1)^{p\alpha})^{1/p}\tau^{l-1}\rho'\aconv_p \mathcal M$. Repeating the same argument for $z$, we conclude $\delta(y),\,\delta(z) \in (1+(d-1)^{p\alpha})^{1/p}\tau^{l-1}\rho'\aconv_p \mathcal M$.
    
    Let next $x\in[0,1]^{d-1}\cap 2^{-n}\mathbb Z^{d-1}$ and $u^\prime, \, v^\prime \in [0, 1]\cap 2^{-n}\mathbb Z$, $u'\neq v'$, be such that $\delta(u)=\delta^i_x (u^\prime)$, $\delta(v)=\delta^i_x (v^\prime)$. We note it holds $\delta(y)=\delta^i_x (0)$ and $\delta(z)=\delta^i_x (1)$. By the preceding paragraph, we have $\delta^i_x (0),\, \delta^i_x (1) \in (1+(d-1)^{p\alpha})^{1/p}\tau^{l-1}\rho'\aconv_p \mathcal M$, and if $q\in 2^{-k}\mathbb Z \setminus 2^{-k+1}\mathbb Z$ for some $k\in\mathbb N$, then $2^{\alpha k}\left(\delta^i_x (q) - \frac 1 2 (\delta^i_x (q^-)+\delta^i_x (q^+))\right) \in \rho'\aconv_p \mathcal M$ by the assumption on $\mathcal M$. By~\Cref{lemma:line,lemma:line_general}, we conclude 
    \[
        \begin{split}
            \frac{\delta(u)-\delta(v)}{|u-v|^\alpha} &= \frac{\delta^i_x (u^\prime)-\delta^i_x (v^\prime)}{|u^\prime-v^\prime|^\alpha}\\ &\in 2^{2/p}\left(\frac 1 {1-2^{p(\alpha-1)}}\right)^{1/p}\left(\frac 1 {1-2^{-p\alpha}}\right)^{1/p}\\
            &\phantom{=}\cdot(1+(d-1)^{p\alpha})^{1/p}\tau^{l-1}\rho'\aconv_p \mathcal M \text.
        \end{split}
    \]
    
    To establish the general case, we pick $u=(u_i)_{i=1}^d, \, v=(v_i)_{i=1}^d\in [0, 1]^d \cap 2^{-n}\mathbb Z^d$, where $n\in\mathbb N_0$, $u\neq v$, and $\mathcal I\subseteq \{1, \ldots, d\}$, $|\mathcal I|=d-l$, are such that $u_i=v_i\in\{0, 1\}$ for any $i\in \mathcal I$. If $u$, $v$ differ at exactly $k$ coordinates, $k\geq 1$, we set $u^1=v$, $u^{k+1}=u$ and find $u^i$, where $i\in\{2, \ldots, k\}$, such that $u^i$, $u^{i+1}$ differ at one coordinate for any $i\in\{1, \ldots, k\}$. Let us remark that all $u^i$, where $i\in\{1, \ldots, k+1\}$, mutually coincide at any coordinate from the set $\mathcal I$.
    
    It follows that $\sum_{i=1}^{k} |u^{i+1}-u^i| = |u-v|$ and
    \begin{gather*}
        \frac{\delta(u)-\delta(v)}{|u-v|^\alpha} = \sum_{i=1}^{k} \frac{|u^{i+1}-u^i|^\alpha}{|u-v|^\alpha}\frac{\delta(u^{i+1})-\delta(u^i)}{|u^{i+1}-u^i|^\alpha},\\
        \left(\sum_{i=1}^{k} \frac{|u^{i+1}-u^i|^{p\alpha}}{|u-v|^{p\alpha}}\right)^{1/p}\leq C^\alpha(p\alpha, d)\text,
    \end{gather*}
    where, by the already proven that, we have for any $i\in\{1, \ldots, k\}$, 
    \begin{equation*}
        \begin{split}
            \frac{\delta(u^{i+1})-\delta(u^i)}{|u^{i+1}-u^i|^\alpha} &\in 2^{2/p}\left(\frac 1 {1-2^{p(\alpha-1)}}\right)^{1/p}\left(\frac 1 {1-2^{-p\alpha}}\right)^{1/p}\\
            &\phantom{\in}\cdot(1+(d-1)^{p\alpha})^{1/p}\tau^{l-1}\rho'\aconv_p \mathcal M \text.
        \end{split}
    \end{equation*}
    
    Hence, we are justified to write 
    \[
        \begin{split}
            \frac{\delta(u)-\delta(v)}{|u-v|^\alpha} &\in C^\alpha(p\alpha, d)2^{2/p}\left(\frac 1 {1-2^{p(\alpha-1)}}\right)^{1/p}\left(\frac 1 {1-2^{-p\alpha}}\right)^{1/p}\\
            &\phantom{\in}\cdot(1+(d-1)^{p\alpha})^{1/p}\tau^{l-1}\rho'\aconv_p \mathcal M\\ 
            &= \tau^{l}\rho'\aconv_p \mathcal M.
        \end{split}
    \]
    The claim follows.
\end{proof}
\smallskip

By now we have collected all the necessary results to establish the isomorphism theorem.

\begin{proof}[Proof of~\Cref{thm:isomorphism_p_cube}.]
    Let $\iota : \{ e_v \}_{v\in V} \subset \ell_p (V) \to \mathcal F_p ([0, 1]^d, |\cdot|^\alpha)$ be defined as $e_v \mapsto 2^{k\alpha}\left(\delta(v)-\sum_{u \in V_{k-1}} \Lambda^d_{2^{-k+1}}(u, v)\delta(u)\right)$ for $v\in V_k\setminus V_{k-1}$, where $k\in\mathbb N_0$.

    It follows easily that $\{ e_v : v\in V \}$ is isometrically $p$-norming in $\ell_p (V)$ and that $\aconv_p \{ e_v : v\in V \}$ contains a~neighborhood of zero in $\operatorname{span} \{ e_v : v\in V \}$. Moreover, we claim that $\iota$ extends to an~one-to-one linear map from $\operatorname{span} \{ e_v : v\in V\}$ into $\mathcal F_p ([0, 1]^d, |\cdot|^\alpha)$. Indeed, an~easy direct argument is possible or it suffices to note that $\delta_{\mathcal F_p ([0, 1]^d, |\cdot|^\alpha)}(x) \mapsto \delta_{\mathcal F_p ([0, 1]^d)} (x)$, where $x\in [0, 1]^d$, induces an~onto linear bijection $\kappa : \operatorname{span}\{ \delta_{\mathcal F_p ([0, 1]^d, |\cdot|^\alpha)}(x) : x \in [0, 1]^d\} \to \operatorname{span}\{ \delta_{\mathcal F_p ([0, 1]^d)}(x) : x \in [0, 1]^d\}$ and $\kappa(\iota(e_v))$, where $v\in V$, form a~Schauder basis in $\mathcal F_p ([0, 1]^d)$, see~\cite[Theorem~3.8]{Albiac2022}.
    
    Once we show that $\iota(\{e_v : v\in V\})$ is $p$-norming in $\mathcal F_p ([0, 1]^d, |\cdot|^\alpha)$ and that $\aconv_p \iota(\{e_v : v\in V\})$ contains a~neighborhood of zero in $\operatorname{span} \iota(\{e_v : v\in V\})$, it will follow from~\Cref{fact:extension} that $\iota$ extends to an~onto isomorphism $\tilde \iota : \ell_p(V)\to\mathcal F_p ([0, 1]^d, |\cdot|^\alpha)$. To that end, let us first establish the inclusion $\beta\iota(\{e_v : v\in V\})\subseteq B_{\mathcal F_p}$ for some $\beta > 0$.

    For any $v \in V_0\setminus V_{-1}$, we deduce \[ \left\Vert \iota(e_v) \right\Vert_p = \left\Vert \delta(v) \right\Vert_p = |v|^\alpha \left\Vert\frac{\delta(v)}{|v|^\alpha} \right\Vert_p \leq d^\alpha \text, \] where $\left\Vert\frac{\delta(v)}{|v|^\alpha} \right\Vert_p = 1$ as $\delta$ is an isometry.
    
    Let $v =(v_i)_{i=1}^d\in V_k\setminus V_{k-1}$, where $k\in\mathbb N$. We first note that
    \begin{equation}\label{eq:isomorphism_norm}
        \begin{split}
            \left\Vert \iota(e_v) \right\Vert_p^p &= \left\Vert \sum_{u \in V_{k-1}} 2^{k\alpha}\Lambda^d_{2^{-k+1}}(u, v)(\delta(v)-\delta(u))\right\Vert_p^p\\
            &\leq \sum_{u \in V_{k-1}} \left\Vert 2^{k\alpha}|v-u|^\alpha\Lambda^d_{2^{-k+1}}(u, v)\frac{\delta(v)-\delta(u)}{|v-u|^\alpha}\right\Vert_p^p\\
            &= \sum_{u \in V_{k-1}} (2^{k\alpha}|v-u|^\alpha\Lambda^d_{2^{-k+1}}(u, v))^p \text,
        \end{split}
    \end{equation}
    where the third equality follows as $\left\Vert \frac{\delta(v)-\delta(u)}{|v-u|^\alpha} \right\Vert_p = 1$ for any $u\in V_{k-1}$, by isometry of $\delta$. We further remark that for any $u=(u_i)_{i=1}^d\in V_{k-1}$, $\Lambda^d_{2^{-k+1}}(u, v)\neq 0$, it follows from~\Cref{lemma:cf:coeff_Lambda_properties}~\labelcref{it:Lambda_4} that $|v_i-u_i|\in \{0, 2^{-k}\}$, where $i\in\{ 1, \ldots, d\}$; hence, $2^{k\alpha}|v-u|^\alpha \leq d^{\alpha}$. Similarly, we recall that $\sum_{u \in V_{k-1}} \Lambda^d_{2^{-k+1}}(u, v) =1$ by~\Cref{lemma:cf:coeff_Lambda_properties}~\labelcref{it:Lambda_3} and $\Lambda^d_{2^{-k+1}}(\cdot, v)\geq 0$ by definition. Continuing~\eqref{eq:isomorphism_norm}, we obtain
    \begin{equation*}
        \begin{split}
            \left\Vert \iota(e_v) \right\Vert_p &\leq \left(\sum_{u \in V_{k-1}} (2^{k\alpha}|v-u|^\alpha\Lambda^d_{2^{-k+1}}(u, v))^p\right)^{1/p} \\
            &\leq d^\alpha C(p, 2^d)\text.
        \end{split}
    \end{equation*}
    
    It follows that $\iota(\{e_v : v\in V\})\subseteq d^\alpha C(p, 2^d) B_{\mathcal F_p}$.

    We verify that $\operatorname{Mol} (V \cup \{ 0 \}) \subseteq \alpha\aconv_p\,\iota(\{e_v : v\in V\})$ for some $\alpha > 0$.
    
    To that end, let $v\in V_k$, where $k\in\mathbb N_0$. Whenever $v \in V_k \setminus V_{k-1}$, we have that $2^{k\alpha}\left(\delta(v)-\sum_{u \in V_{k-1}} \Lambda^d_{2^{-k+1}}(u, v)\delta(u)\right) \in \iota(\{e_{v'} :v' \in V\})$. If $v \in V_{k-1}$, we recall that $\Lambda^d_{2^{-k+1}} (u, v) = \delta_{u, v}$ for any $u \in V_{k-1}$ by~\Cref{lemma:cf:coeff_Lambda_properties}~\labelcref{it:Lambda_2}; hence, $2^{k\alpha}\left(\delta(v)-\sum_{u \in V_{k-1}} \Lambda^d_{2^{-k+1}}(u, v)\delta(u)\right)=0$. We deduce that
    \begin{align*}
        2^{k\alpha}\left(\delta(v)-\sum_{u \in V_{k-1}} \Lambda^d_{2^{-k+1}}(u, v)\delta(u)\right) & \in \{ 0, \iota(e_v)\}\\
        &\subseteq \aconv_p \iota(\{e_{v'} :v' \in V\}) \text.
    \end{align*}
    
    Since now $\aconv_p \iota(\{e_v :v\in V\})$ satisfies the assumption of~\Cref{lemma:step} and, in particular, we have that $\{ \delta(u) : u\in\{0, 1\}^d \} = \{ \iota(e_v) : v\in V_0\setminus V_{-1} \} \cup \{ 0 \} \subseteq \aconv_p \iota(\{e_v : v\in V\})$, by~\Cref{lemma:combinations} there exists $\alpha > 0$ such that \begin{equation}\label{eq:norming_image} \operatorname{Mol} (V\cup \{ 0 \}) \subseteq \alpha\aconv_p \iota(\{e_v : v\in V\}) \text.\end{equation} Quantitatively, if $\rho$ and $\tau$ are as in~\Cref{lemma:combinations,lemma:step}, respectively, we may set $\alpha = \rho^d\tau^d$.
    
    Note that $\operatorname{Mol} (V\cup \{ 0 \})$ is a~dense subset of $\operatorname{Mol} ([0,1]^d)$, which is isometrically $p$-norming by~\Cref{fact:cf:isometrically_norming}. Hence, we get from~\eqref{eq:norming_image} that $B_{\mathcal F_p} \subseteq \alpha\,\overline \aconv_p \iota(\{e_v : v\in V\})$. Since also $\iota(\{e_v : v\in V\}) \subseteq d^\alpha C(p, 2^d) B_{\mathcal F_p}$ by the already proven part, we conclude that $\iota(\{e_v : v\in V\})$ is $p$-norming in $\mathcal F_p ([0, 1]^d, |\cdot|^\alpha)$. 
    
    Moreover, \Cref{lemma:isometrically_norming_subset} in conjuction with~\eqref{eq:norming_image} shows that $\aconv_p \iota(\{e_v : v\in V\})\supseteq 1/\alpha\cdot \operatorname{Mol} (V\cup \{ 0 \})$ contains a~neighborhood of zero in $\operatorname{span} \iota(\{e_v : v\in V\}) = \mathcal P (V\cup \{ 0 \})$.

    Altogether, we have verified that the assumptions of~\Cref{fact:extension} are satisfied; hence, $\iota$ extends to an~onto isomorphism $\tilde \iota$.
    
    Quantitatively, we have shown that \[ \frac 1 {C(p, 2^d)}\, \overline\aconv_p\,\iota(\{e_v : v\in V\}) \subseteq B_{\mathcal F_p} \subseteq \rho^d\tau^d\, \overline\aconv_p\,\iota(\{e_v : v\in V\}) \text,\] and thus the Banach-Mazur distance of $\mathcal F_p ([0, 1]^d, |\cdot|^\alpha)$ and $\ell_p (V)$ may be estimated as $d(\mathcal F_p ([0, 1]^d, |\cdot|^\alpha), \ell_p (V)) \leq \lVert \tilde\iota \rVert \lVert {\tilde\iota}^{-1}\rVert \leq C(p, 2^d)\rho^d\tau^d$.
    
    The proof is complete.

\end{proof}

\subsection{The Isomorphism~\texorpdfstring{$\mathcal F_p (\mathcal M, \rho^\alpha)$}{Fp(M, rho alpha)}, Where~\texorpdfstring{$(\mathcal M, \rho)$}{(M, rho)} Is Infinite Doubling}

We generalize the isomorphism theorem to Hölder distortions of infinite doubling metric spaces. The following argument was suggested in~\cite{Albiac2021sums}.

\begin{thm}\label{thm:isomorphism_p}
    Let $(\mathcal M, \rho)$ be an~infinite doubling metric space and $0 < \alpha < 1$, $0<p\leq 1$. Then $\mathcal F_p (\mathcal M, \rho^\alpha)$ is isomorphic to the space~$\ell_p$.
\end{thm}

\begin{proof}
    Let $\beta \in (\alpha, 1)$. By Assouad's theorem, see~\cite[Proposition~2.6]{Assouad1983}, it follows that $(\mathcal M, \rho^{\alpha / \beta})$ is bi-Lipschitz equivalent to $(\mathcal M', |\cdot|)$ for some $\mathcal M' \subseteq \mathbb R^d$, where $d\in\mathbb N$. As a~consequence, $(\mathcal M, \rho^{\alpha})$ is bi-Lipschitz equivalent to $(\mathcal M', |\cdot|^\beta)$. Hence, for the rest of the proof we may assume that $\mathcal M$ is an~infinite subset of $\mathbb R^d$.

    We note that $\mathcal F_p (\mathbb R^d, |\cdot|^\alpha)$ contains a~complemented subspace isomorphic to $\mathcal F_p (\mathcal M, |\cdot|^\alpha)$, which is infinite-dimensional since $\mathcal M$ is infinite. In fact, it is true that for any $\mathcal M \subseteq \mathcal N \subseteq \mathbb R^d$, there exists a~bounded operator $T:\mathcal F_p (\mathcal N)\to \mathcal F_p (\mathcal M)$ satisfying $T\circ L_i= \operatorname{Id}_{\mathcal F_p (\mathcal M)}$, where $L_i$ is the canonical linearization of the inclusion map $i:\mathcal M \to \mathcal N$, i.e., $\mathcal M$ is~\emph{complementably $p$-amenable} in $\mathcal N$, see~\cite[Theorem~5.1]{Albiac2021sums}.

    Moreover, it is known that $\mathcal F_p (\mathbb R^d, |\cdot|^\alpha) \simeq \mathcal F_p ([0, 1]^d, |\cdot|^\alpha)$, see~\cite[Theorem~4.15]{Albiac2021sums}. In light of~\Cref{thm:isomorphism_p_cube}, it follows that  $\mathcal F_p (\mathbb R^d, |\cdot|^\alpha) \simeq \ell_p$. 
    
    We deduce that $\mathcal F_p(\mathcal M, |\cdot|^\alpha)$ is isomorphic to a~complemented infinite-dimensional subspace of $\ell_p$. Hence, it is isomorphic to $\ell_p$, see~\cite[Theorem~1]{Pelczyński1960} and~\cite[Theorem~2]{Stiles1972}, and this finishes the proof.
\end{proof}

\emergencystretch=0.5em
\printbibliography
\end{document}